\newcounter{mylisti} \newcounter{mylistii}
\newcounter{nest}
\newcommand{\defaultlabel}{}
\newenvironment{mylist}[1]{%
  \addtocounter{nest}{1}
  \ifthenelse{\value{nest}=1}{%
    \renewcommand{\defaultlabel}{(\roman{mylisti})\hfill}}{%
    \renewcommand{\defaultlabel}{(\alph{mylistii})\hfill}}
  \begin{list}{\defaultlabel}{%
      \ifthenelse{\value{nest}=1}{\usecounter{mylisti}}{%
        \usecounter{mylistii}}
      
      \addtolength{\itemsep}{0.5ex}
      \settowidth{\labelwidth}{#1}
      \setlength{\leftmargin}{\labelwidth}
      \addtolength{\leftmargin}{\labelsep}}}{\addtocounter{nest}{-1}
\end{list}}
\newcommand{\Sb}{\ensuremath{\overline{S}}}
\newcommand{\be}{\ensuremath{\mathbb E}}
\newcommand{\bn}{\ensuremath{\mathbb N}}
\newcommand{\bp}{\ensuremath{\mathbb P}}
\newcommand{\br}{\ensuremath{\mathbb R}}
\newcommand{\cB}{\ensuremath{\mathcal B}}
\newcommand{\cF}{\ensuremath{\mathcal F}}
\newcommand{\cG}{\ensuremath{\mathcal G}}
\newcommand{\cJ}{\ensuremath{\mathcal J}}
\newcommand{\cK}{\ensuremath{\mathcal K}}
\newcommand{\cL}{\ensuremath{\mathcal L}}
\newcommand{\cX}{\ensuremath{\mathcal X}}
\newcommand{\cGb}{\ensuremath{\overline{\mathcal G}}}
\newcommand{\cJb}{\ensuremath{\overline{\mathcal J}}}
\newcommand{\ut}{\ensuremath{\tilde{u}}}
\newcommand{\At}{\ensuremath{\tilde{A}}}
\newcommand{\Bt}{\ensuremath{\tilde{B}}}
\newcommand{\Dt}{\ensuremath{\tilde{D}}}
\newcommand{\Qt}{\ensuremath{\tilde{Q}}}
\newcommand{\Tt}{\ensuremath{\tilde{T}}}
\newcommand{\pit}{\ensuremath{\tilde{\pi}}}
\newcommand{\abs}[1]{\lvert #1\rvert}
\newcommand{\bigabs}[1]{\big\lvert #1\big\rvert}
\newcommand{\Bigabs}[1]{\Big\lvert #1\Big\rvert}
\newcommand{\Ave}{\operatorname{Ave}}
\newcommand{\diag}{\operatorname{diag}}
\newcommand{\id}{\operatorname{Id}}
\newcommand{\bi}{\ensuremath{\boldsymbol{1}}}
\newcommand{\bigintp}[1]{\ensuremath{\big\lfloor #1\big\rfloor}}
\newcommand{\ip}[2]{\ensuremath{\langle #1,#2\rangle}}
\newcommand{\bigip}[2]{\ensuremath{\big\langle #1,#2\big\rangle}}
\newcommand{\join}{\vee}
\newcommand{\bigjoin}{\bigvee}
\newcommand{\meet}{\wedge}
\newcommand{\norm}[1]{\lVert #1\rVert}
\newcommand{\bignorm}[1]{\big\lVert #1\big\rVert}
\newcommand{\Bignorm}[1]{\Big\lVert #1\Big\rVert}
\newcommand{\prob}[1]{\ensuremath{\bp(#1)}}
\newcommand{\bigprob}[1]{\ensuremath{\bp \big(#1\big)}}
\newcommand{\Bigprob}[1]{\ensuremath{\bp \Big(#1\Big)}}
\newcommand{\restrict}{\ensuremath{\!\!\restriction}}
\newcommand{\sgn}{\ensuremath{\mathrm{sign}}}
\newcommand{\spn}{\ensuremath{\mathrm{span}}}
\newcommand{\co}{\mathrm{c}_0}
\newcommand{\me}{\mathrm{e}}
\newcommand{\V}{\forall \,}
\newcommand{\E}{\exists\,}
\newcommand{\vare}{\varepsilon}
\newcommand{\ds}{\displaystyle}
\newcommand{\ts}{\textstyle}
\newcommand{\phtm}[1]{\text{\makebox[0pt]{\phantom{$#1$}}}}
\newcommand{\eg}{\textit{e.g.,}\ }
\newcommand{\ie}{\textit{i.e.,}\ }
\newcommand{\Ie}{\textit{I.e.,}\ }
\newcommand{\cf}{\textit{c.f.}\ }
\newcommand{\etc}{\textit{etc.}\ }
\newcommand{\thespace}{\ensuremath{\big(\bigoplus
    _{n=1}^\infty\ell_1^n \big)_{\co}}}
\newcommand{\sur}{\ensuremath{^{\mathrm{(sur)}}}}
\newcommand{\inj}{\ensuremath{^{\mathrm{(inj)}}}}
\newtheorem{thm}{Theorem}[section]
\newtheorem{mainthm}{Theorem}
\newtheorem{lem}[thm]{Lemma}
\newtheorem{prop}[thm]{Proposition}
\newtheorem*{problem}{Problem}
\newtheorem*{question}{Question}
\theoremstyle{definition}
\newtheorem*{defn}{Definition}
\theoremstyle{remark}
\newtheorem*{rem}{Remark}
\title[Dichotomy theorems for random matrices and ideals]{Dichotomy
  theorems for random matrices and closed ideals of operators on
  $\big(\bigoplus _{n=1}^\infty\ell_1^n \big)_{\co}$}
\author{N.~J.~Laustsen, E.~Odell, Th.~Schlumprecht and A.~Zs\'ak}
\subjclass[2000]{47L10 (primary), 46B09, 46B42, 47L20, 46B45 (secondary)}
\thanks{The authors gratefully acknowledge the financial support
from the EPSRC, grant EP/F023537/1, and the NSF, grants DMS 0700126
and DMS 0856148. Commutative diagrams were drawn by Paul Taylor's
  package.}
\begin{document}

\begin{abstract}
We prove two dichotomy theorems about sequences of operators into
$L_1$ given by random matrices. In the second theorem we assume that
the entries of each random matrix form a sequence of independent,
symmetric random variables. Then the corresponding sequence of
operators either uniformly factor the identity operators on $\ell_1^k$
$(k\in\bn$) or uniformly approximately factor through $\co$. The first
theorem has a
slightly weaker conclusion still related to factorization properties
but makes no assumption on the random matrices. Indeed, it applies to
operators defined on an arbitrary sequence of Banach spaces. These
results provide information on the closed ideal structure of the
Banach algebra of all operators on the space $\thespace$.
\end{abstract}

\maketitle

\section*{Introduction}

In this paper we study closed ideals of operators on the space
$\thespace$ with the ultimate goal of classifying all of them. When
studying operators on
this space one is quickly
reduced to considering sequences of operators $T^{(m)}\colon
\ell_\infty^m (\ell_1^m)\to \ell_1^m$ ($m\in\bn$), where $\ell_\infty^m
(\ell_1^m)$ is the $\ell_\infty$-sum of $m$ copies of
$\ell_1^m$. Often it will be more convenient to use a different
normalization and
view $T^{(m)}$ as an operator into $L_1=L_1[0,1]$. We shall denote by
$e_{i,j}=e^{(m)}_{i,j}$ the unit vector basis of $\ell_\infty^m
(\ell_1^m)$, where the norm of $\sum_{i,j}a_{i,j}e_{i,j}$ is given by
$\max_i\sum_j\abs{a_{i,j}}$. We then let $T^{(m)}_{i,j}=T^{(m)}_{\phtm{i,j}}(
e^{\phtm{(m)}}_{i,j})$, so $T^{(m)}$ can be identified with the $m\times m$ matrix
$\big(T^{(m)}_{i,j}\big)$ with entries in $L_1$. Our main results
concern such random matrices. The first one is general with no extra
assumptions on the random variables $T^{(m)}_{i,j}$.
\begin{mainthm}
  \label{mainthm:general-dichotomy}
  Let $T^{(m)}\colon \ell_\infty^m (\ell_1^m)\to L_1$ ($m\in\bn$) be a
  uniformly bounded sequence of operators. Then
  \begin{mylist}{(ii)}
  \item
    either the identity operators $\id_{\ell_1^k}\colon
    \ell_1^k\to\ell_1^k$ ($k\in\bn$) uniformly factor through the
    $T^{(m)}$,
  \item
    or the operators $T^{(m)}$ have uniform
    approximate lattice bounds, \ie
    \[
    \V \vare>0\quad \E C>0\quad \V m\in\bn\quad \E g_m\in
  L_1\quad\text{such that}\quad \norm{g_m}_{L_1}\leq C\quad\text{and}
\]
\[
    T^{(m)} \big( B_{\ell_\infty^m (\ell_1^m)}\big) \subset \big\{
    f\in L_1:\,\abs{f}\leq g_m\big\} + \vare B_{L_1}\ .
\]
  \end{mylist}
\end{mainthm}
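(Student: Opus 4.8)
The strategy is to prove the dichotomy in the form ``if~(ii) fails then~(i) holds''. So suppose there is $\vare_0>0$ such that for every $C>0$ there is an index $m=m(C)$ for which no $g\in L_1$ with $g\ge 0$ and $\norm{g}_{L_1}\le C$ satisfies $T^{(m)}\big(B_{\ell_\infty^m(\ell_1^m)}\big)\subseteq\{f:\abs f\le g\}+\vare_0B_{L_1}$. Since for $z\in L_1$ and $g\ge0$ the distance in $L_1$ from $z$ to the order interval $[-g,g]$ equals $\norm{(\abs z-g)^+}_{L_1}$, this hypothesis says: for every $g\ge0$ with $\norm g_{L_1}\le C$ there is $x$ in the unit ball with $\norm{(\abs{T^{(m)}x}-g)^+}_{L_1}>\vare_0$. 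Put $M=\sup_m\norm{T^{(m)}}<\infty$. The plan is to show that under this assumption, for each $k\in\bn$ one can choose $C=C_k$ of order $k^2$ so that $\id_{\ell_1^k}$ factors through $T^{(m(C_k))}$ with product of norms at most $4/\vare_0$; as this bound is independent of $k$, alternative~(i) follows.

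Fix $k$, let $C$ be large, and write $S=T^{(m(C))}$, $B$ for the relevant unit ball. The first step is a greedy exhaustion of the obstruction. Set $h_0=0$; as long as $\norm{h_{p-1}}_{L_1}\le C-1$, apply the hypothesis to $g=h_{p-1}$ to obtain $z_p\in S(B)$ with $\norm{(\abs{z_p}-h_{p-1})^+}_{L_1}>\vare_0$, and set $h_p=h_{p-1}\vee\abs{z_p}$. Because $h_{p-1}\vee\abs{z_p}=h_{p-1}+(\abs{z_p}-h_{p-1})^+$ with both summands nonnegative, each step increases $\norm{h_p}_{L_1}$ by more than $\vare_0$, so the process stops after $L\le 2+(C-1)/\vare_0$ steps, with $\norm{h_L}_{L_1}>C-1$. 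Now disjointify: choose pairwise disjoint measurable sets $D_1,\dots,D_L$ partitioning $\supp h_L$ on which $\abs{z_p}=h_L$ respectively, so that $\sum_{p\le L}\norm{z_p\bi_{D_p}}_{L_1}=\norm{h_L}_{L_1}>C-1$. Since each term is at most $M$ and there are at most $L$ of them, the set $I_0=\{p:\norm{z_p\bi_{D_p}}_{L_1}\ge\vare_0/2\}$ satisfies $\abs{I_0}\ge\big((C-1)/2-\vare_0\big)/M$, which tends to infinity with $C$.

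The second step upgrades these vectors, which carry at least $\vare_0/2$ of their mass on the pairwise disjoint sets $D_p$, to a nearly disjointly supported subfamily; this is needed because the remaining mass of each $z_p$ may be spread arbitrarily over the other $D_q$'s. For $p,q\in I_0$ put $\beta_{pq}=\norm{z_q\bi_{D_p}}_{L_1}$; disjointness of the $D_p$ gives $\sum_{p\ne q}\beta_{pq}=\norm{z_q\bi_{\bigcup_{p\ne q}D_p}}_{L_1}\le M$ for each $q$, hence $\sum_{p\ne q}\beta_{pq}\le\abs{I_0}M$. Thus the graph on $I_0$ joining $p$ to $q$ when $\max(\beta_{pq},\beta_{qp})\ge\vare_0/(4k)$ has at most $4k\abs{I_0}M/\vare_0$ edges, so by the Tur\'an-type bound it has an independent set $I$ with $\abs I\ge\abs{I_0}/(1+8kM/\vare_0)$; choosing $C=C_k$ of order $M^2k^2/\vare_0$ at the outset makes this at least $k$, and we keep $\abs I=k$. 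For $p\in I$ write $z_p=Sx_p$ with $x_p$ in the unit ball, define $A\colon\ell_1^k\to\ell_\infty^{m(C)}(\ell_1^{m(C)})$ by $Ae_p=x_p$ (so $\norm A\le1$) and $\tilde B\colon L_1\to\ell_1^k$ by $\tilde Bf=\sum_{p\in I}\norm{z_p\bi_{D_p}}_{L_1}^{-1}\big(\int_{D_p}f\,\sgn z_p\big)e_p$, so that $\norm{\tilde B}\le2/\vare_0$. Since $\tilde BSAe_q=e_q+\sum_{p\ne q}\theta_{pq}e_p$ with $\abs{\theta_{pq}}\le\beta_{pq}\big/(\vare_0/2)<1/(2k)$, the operator $\tilde BSA$ differs from $\id_{\ell_1^k}$ by one of norm $<1/2$, hence is invertible with inverse of norm $\le2$; replacing $\tilde B$ by $(\tilde BSA)^{-1}\tilde B$ yields $B$ with $BSA=\id_{\ell_1^k}$ and $\norm A\norm B\le4/\vare_0$.

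The genuinely delicate point, around which the whole argument is organised, is the second step. Failure of~(ii) is precisely an obstruction at a fixed level $\vare_0$ that cannot be captured by an order bound of bounded $L_1$-norm, and the greedy construction extracts from it many images each depositing $\gtrsim\vare_0$ of mass on pairwise disjoint small sets; but disjointness of those deposits alone does not yield an $\ell_1^k$-structure, since the leftover mass can interfere. The averaging inequality $\sum_{p\ne q}\beta_{pq}\le\abs{I_0}M$ together with the Ramsey-type pruning is what forces an honestly near-disjoint family, and the price---retaining only about $\sqrt{\abs{I_0}}$ of the vectors---is harmless because $\neg$(ii) supplies the quantitative input for \emph{every} $C$, so $\abs{I_0}$ can be made as large as needed.
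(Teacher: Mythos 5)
Your proof is correct, and it diverges from the paper's route at an interesting point. Both arguments begin with the same greedy exhaustion: if (ii) fails at level $\vare_0$, you repeatedly update the lattice bound $h_{p-1}\mapsto h_p=h_{p-1}\vee\abs{z_p}$ until $\norm{h_L}_{L_1}>C-1$, which produces a family of images each contributing $>\vare_0$ of \emph{new} mass to the running supremum. The paper then extracts from this family (via a geometric argument with the regions $D_i$ and $\tilde D_i$ between successive graphs and a pigeonhole over $N=\lfloor 4n^2/\vare\rfloor$ indices) functions $f_1,\dots,f_n$ with pairwise disjoint sets $E_s$ and $\norm{f_s\restriction_{E_s}}\geq\vare/2$, and feeds these into Proposition~\ref{prop:almost-disjoint-supp-implies-factorization}, whose proof needs the full Johnson--Schechtman quantitative version of Rosenthal's theorem (Theorem~\ref{thm:almost-disjoint-supp-fns}), James's blocking trick to push the $\ell_1^k$-equivalence constant below $\sqrt2$, and finally Dor's Theorem~\ref{thm:dor} to produce a bounded projection and hence a genuine factorization. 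You avoid all of that machinery: after disjointifying $\supp h_L$ into sets $D_p$ where $\abs{z_p}$ attains $h_L$, you retain the indices where $\norm{z_p\bi_{D_p}}_{L_1}\geq\vare_0/2$, and then run a Tur\'an-type pruning on the overlap matrix $\beta_{pq}=\norm{z_q\bi_{D_p}}_{L_1}$ to select $I$ with $\abs I=k$ and $\beta_{pq}<\vare_0/(4k)$ off the diagonal. This quantitatively \emph{stronger} almost-disjointness lets you factor $\id_{\ell_1^k}$ by a bare-hands construction: the map $\tilde B f=\sum_{p\in I}\norm{z_p\bi_{D_p}}^{-1}\big(\int_{D_p}f\,\sgn z_p\big)e_p$ has norm $\leq 2/\vare_0$, and $\tilde B T^{(m)}A$ is a near-identity matrix on $\ell_1^k$ (off-diagonal entries of size $<1/(2k)$, hence column sums $<1/2$), so it can be corrected by inversion. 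The trade-off is that you retain only about $\abs{I_0}/(1+8kM/\vare_0)$ indices, so you need $\abs{I_0}$ to be of order $k^2$ and hence $C$ of order $M^2k^2/\vare_0$, whereas the paper's extraction also runs at quadratic rate but then leans on Dor's theorem to handle the uncontrolled residual mass of each $f_s$ on the other $E_r$'s. Since $\neg$(ii) supplies the construction for \emph{every} $C$, the quadratic cost is harmless, and your argument is both more self-contained and gives an explicit, uniform factorization constant $4/\vare_0$.

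One small presentational remark: the constant $C$ is chosen with the \emph{target} $k$ in mind (so that $\abs{I_0}\gtrsim(C-1)/(2M)\geq k(1+8kM/\vare_0)$), so it would be cleaner to state the choice of $C=C_k$ \emph{before} the greedy loop; as written, the order of quantifiers is correct but the reader has to look ahead to see why the parameters work out.
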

Here and throughout the paper we denote by $B_X$ the closed unit ball
of a Banach space $X$.
It turns out that this result does not depend on the domain spaces of
the $T^{(m)}$ which can be replaced by an arbitrary sequence of Banach
spaces (\cf Theorem~\ref{thm:general-dichotomy}).  
One of the consequences of this theorem is that the Banach
algebra $\cB(X)$ of all bounded operators on $X=\thespace$ has a
unique maximal ideal. We thus
obtain the following picture of the lattice of closed ideals of
$\cB(X)$. Here $\cK$ is the ideal of compact operators while
$\cG_{\co}$ denotes the ideal of operators factoring through
$\co$. For an operator ideal $\cJ$ we let $\cJb$ be the norm closure
of $\cJ$ and we denote by $\cJ\sur$ the surjective hull of $\cJ$
(defined in Section~\ref{sec:unique-max-ideal}).
\begin{mainthm}
  \label{mainthm:ideal-structure}
  Let $X=\thespace$. We have the following closed ideals in $\cB(X)$:
  \[
  \{0\} \subsetneq \cK(X) \subsetneq \cGb_{\co}(X)
  \subseteq \cGb_{\co}\sur(X) \subsetneq \cB(X)\ .
  \]
  Moreover, if there is another closed ideal $\cJ$ of $\cB(X)$, then
  it must lie between $\cGb_{\co}(X)$ and its surjective
  hull. In particular, $\cGb_{\co}\sur(X)$ is the
  unique maximal ideal of $\cB(X)$.
\end{mainthm}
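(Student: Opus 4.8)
The plan is to derive Theorem~\ref{mainthm:ideal-structure} from Theorem~\ref{mainthm:general-dichotomy} together with three elementary facts about $X=\thespace$: (a) every normalized block sequence relative to the natural finite-dimensional decomposition $(\ell_1^{n})_{n}$ of $X$ is isometrically equivalent to the unit vector basis of $\co$, so $X$ is $\co$-saturated, every block subspace is $1$-complemented, and $\co$ is $1$-complemented in $X$; (b) every such block sequence is weakly null, since $X^{*}=\big(\bigoplus_{n}\ell_\infty^{n}\big)_{\ell_1}$; and (c) each block $\ell_1^{m}$ and each subspace $\ell_\infty^{m}(\ell_1^{m})$ spanned by $m$ of the blocks sits $1$-complementedly in $X$, so every $T\in\cB(X)$ has \emph{compressions} $T^{(m)}=\pi_{m}T\iota_{m}\colon\ell_\infty^{m}(\ell_1^{m})\to\ell_1^{m}\hookrightarrow L_1$ — with $\ell_1^{m}$ realised isometrically in $L_1$ as $m$ disjoint bumps — uniformly bounded by $\norm T$. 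I would first settle the bottom of the lattice. That $\{0\}\subsetneq\cK(X)$ is clear; every finite-rank operator factors through some $\ell_\infty^{n}\subseteq\co$, so, taking closures and using that $X$ has a basis, $\cK(X)\subseteq\cGb_{\co}(X)$; and $\cGb_{\co}(X)\subseteq\cGb_{\co}\sur(X)$ holds for any operator ideal, the surjective hull being closed because ``$SQ_X\in\cGb_{\co}$'' is a closed condition on $S$. The crucial point here is that \emph{every non-compact $T\in\cB(X)$ fixes a complemented copy of $\co$}: non-compactness yields a bounded $\delta$-separated sequence $(Tx_{k})$, and by (a), (b) and a Bessaga--Pe\l czy\'nski selection one may pass to a subsequence along which $(x_{k})$ is a normalized block sequence and $(Tx_{k})$ a small perturbation of a block sequence; then $T$ is bounded below on $Z:=\cspn(x_{k})\cong\co$ with $T(Z)=:W$ a complemented copy of $\co$, so the projection $P_{W}=T\bigl(\iota_{Z}(T|_{Z})^{-1}Q_{W}\bigr)$ lies in the closed ideal generated by $T$. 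Since $\co$ is $1$-complemented in $X$, every operator through $\co$ factors through $P_{W}$, whence $\cGb_{\co}(X)=\overline{(P_{W})}$ lies in that ideal; in particular $\cK(X)\subsetneq\cGb_{\co}(X)$, as a complemented-$\co$ projection is non-compact. Finally, since $X$ has the metric approximation property, a nonzero closed ideal contains every finite-rank operator, hence $\cK(X)$, so any closed ideal contained in $\cGb_{\co}(X)$ equals one of $\{0\}$, $\cK(X)$, $\cGb_{\co}(X)$.

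The core is the operator-level dichotomy: \emph{for every $T\in\cB(X)$, either $T\in\cGb_{\co}\sur(X)$, or $\id_{X}$ belongs to the closed ideal $\cJ_{T}$ generated by $T$} (so $\cJ_{T}=\cB(X)$). Granting a characterisation — proved in parallel, via the reduction indicated in the Introduction — to the effect that $T\in\cGb_{\co}\sur(X)$ exactly when every sequence of compressions of $T$ has uniform approximate lattice bounds, suppose $T\notin\cGb_{\co}\sur(X)$ and choose compressions $T^{(m)}=\pi_{m}T\iota_{m}$ lacking such bounds, the ranges of the $\iota_{m}$ supported on pairwise disjoint block-intervals. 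Theorem~\ref{mainthm:general-dichotomy} then supplies uniformly bounded $A_{k},B_{k}$ with $B_{k}T^{(m_{k})}A_{k}=\id_{\ell_1^{k}}$, and composing with the $1$-complemented inclusions and projections gives $\At_{k}\colon\ell_1^{k}\to X$ and $\Bt_{k}\colon X\to\ell_1^{k}$, bounded by some $C$, with $\Bt_{k}T\At_{k}=\id_{\ell_1^{k}}$. I would refine the choice of the block-intervals inductively: since $T$ carries the finite-dimensional range of each $\iota_{k}$ into a subspace that, by (b), is essentially supported on finitely many blocks, and those blocks can be kept disjoint from all blocks already used, one can arrange $\norm{\pi_{l}T\iota_{j}}\le\varepsilon_{j}$ for \emph{all} $l\neq j$ with $\sum_{j}\varepsilon_{j}$ as small as desired. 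Setting $A=\sum_{k}\At_{k}P_{k}$ and $B=\sum_{k}J_{k}\Bt_{k}$ — bounded by $C$ because the blocks are disjoint — one obtains $BTA=\id_{X}+E$ with $\norm E\le C^{2}\sum_{j}\varepsilon_{j}<\tfrac12$; hence $BTA$ is invertible and $\id_{X}=(BTA)^{-1}BTA\in\cJ_{T}$.

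It remains to check that $\cGb_{\co}\sur(X)$ is proper, which I would do directly: if $\id_{X}\in\cGb_{\co}\sur(X)$, then the canonical quotient map $\ell_1(B_{X})\to X$ factors, to within $\tfrac12$ in norm, through $\co$, which forces $X$ to be a quotient of $\co$ and hence $X^{*}=\big(\bigoplus_{n}\ell_\infty^{n}\big)_{\ell_1}$ to embed into $\ell_1$ — impossible, since $\ell_1$ has cotype $2$ whereas the complemented subspaces $\ell_\infty^{n}$ of $X^{*}$ have cotype-$2$ constants tending to infinity. Assembling everything: the dichotomy shows every proper closed ideal lies in $\cGb_{\co}\sur(X)$, which together with its properness makes it the unique maximal ideal; with the first paragraph this gives the displayed chain ($\cGb_{\co}(X)\subsetneq\cB(X)$ and $\cGb_{\co}\sur(X)\subsetneq\cB(X)$ being part of properness), and the final assertion, since a closed ideal outside the stated list contains a non-compact operator, hence $\cGb_{\co}(X)$, and is proper, hence lies inside $\cGb_{\co}\sur(X)$.

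I expect the main obstacle to be the implication $\Bt_{k}T\At_{k}=\id_{\ell_1^{k}}\ (k\in\bn)\Longrightarrow\id_{X}\in\cJ_{T}$: because $X$ is only a $\co$-sum, the diagonal assembly $BTA$ carries an off-diagonal error $E=\sum_{l\neq k}J_{l}\Bt_{l}T\At_{k}P_{k}$ that need not vanish for a general bounded $T$, and controlling it is precisely where (a)--(b) must be used in earnest — one has to run the gliding-hump selection so that each image $T\iota_{k}$ is almost disjointly supported from \emph{every} compression range $\pi_{l}$, not merely from the later ones, exploiting weak nullity of block sequences and the fact that finite-dimensional subspaces of $X$ live, up to $\varepsilon$, on finitely many blocks. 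A secondary technical point is the characterisation of $\cGb_{\co}\sur(X)$ through uniform approximate lattice bounds of compressions: the half saying that such bounds on all compressions force $T$ into $\cGb_{\co}\sur(X)$ needs the reduction of operators on $X$ to matrices into $L_1$ to be carried out with uniform control of the constants.
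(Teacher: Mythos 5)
Your overall architecture matches the paper's: the bottom of the lattice via standard basis arguments (this is exactly the paper's Proposition~\ref{prop:ideal-structure-easy}), properness of $\cGb_{\co}\sur(X)$ via cotype~$2$ of $\ell_1$ after dualising a lift through $\ell_1$ (the paper's argument in Theorem~\ref{thm:unique-max-ideal}, where the $\tfrac12$-approximate factorisation issue you gloss over is handled by working with $\cGb_{\co}$ rather than $\cG_{\co}$), and the core dichotomy via Theorem~\ref{mainthm:general-dichotomy}. But your second paragraph has a genuine gap, which you yourself half-diagnose at the end.

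The problem is the coordination in the assembly step. You want to choose disjoint block intervals $I_k$ and row indices $m_k$ so that \emph{simultaneously} (a) the compressions $\pi_{m_k}T\iota_{I_k}$ uniformly factor $\id_{\ell_1^k}$, and (b) $\norm{\pi_{m_l}T\iota_{I_k}}\leq\varepsilon_k$ for all $l\neq k$. Constraint~(a) is produced by applying Theorem~\ref{mainthm:general-dichotomy} to one fixed sequence of compressions, which pins down the indices $m_k$ and their domains; constraint~(b) is a gliding-hump condition that requires freedom to slide both $m_k$ and $I_k$ past whatever has already been fixed. These pull in opposite directions, and your proposal gives no reason they can be met at once. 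Moreover, if you change the block intervals to satisfy~(b), you change the compressions and lose the guarantee~(a). This is precisely the obstruction the paper avoids by first applying Lemma~\ref{lem:loc-finite-perturbation}: after a norm-small compact perturbation, $T$ is locally finite, so each row $T^{(m)}$ has a finite column support $R_m$, the finite-column property lets one pass to a subsequence with $R_{m_1}<R_{m_2}<\cdots$, and then the off-diagonal blocks $Q_{m_l}T J^{(m_k)}$ vanish \emph{exactly} for $l\neq k$ (Proposition~\ref{prop:reduction-to-fd-case}(i)). No quantitative control of errors is needed, and no coordination with the factorisation constants is required because the disjointness of supports is extracted \emph{before} the dichotomy is invoked. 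The compact perturbation is harmless because it changes neither the ideal generated by $T$ nor membership in $\cGb_{\co}\sur(X)$.

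A secondary point: you grant a \emph{biconditional} characterisation, ``$T\in\cGb_{\co}\sur(X)$ iff every sequence of compressions has uniform approximate lattice bounds,'' and then argue contrapositively. The paper never needs the forward implication. Its logical structure is: if $\id_X$ does not factor through $T$, then (Proposition~\ref{prop:reduction-to-fd-case}(i)) the rows $T^{(m)}$ do not uniformly factor $\id_{\ell_1^k}$, hence (Theorem~\ref{thm:general-dichotomy}) they have uniform approximate lattice bounds, hence (Proposition~\ref{prop:lattice-bounds-factorization}(ii), applied to $T^{(m)}\pi_m$ after lifting quotient maps through $\ell_1$) $T\pi$ approximately factors through $\co$, i.e. $T\in\cGb_{\co}\sur(X)$. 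Only the one direction of the lattice-bound characterisation is used, and only for locally finite $T$, which is exactly what Section~\ref{sec:prelim} and~\ref{sec:general-dichotomy} establish. Your framing commits you to proving an unneeded converse.

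In short: replace the gliding-hump by the locally-finite perturbation and pass to a subsequence with $R_{m_1}<R_{m_2}<\cdots$, and drop the biconditional in favour of the one implication actually available; with those two adjustments your proof becomes the paper's.
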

We do not know whether the inclusion $\cGb_{\co}(X)\subseteq
\cGb_{\co}\sur(X)$ is proper. If it is in fact an equality,
then $\cK(X)$ and $\cGb_{\co}(X)$ are the only non-trivial (\ie
non-zero), proper
closed ideals of $\cB(X)$ and we have a full description of the
lattice of closed ideals of $\cB(X)$. Otherwise
$\cGb_{\co}\sur(X)$ may be the only non-trivial, proper
closed ideal of $\cB(X)$ besides $\cK(X)$ and $\cGb_{\co}(X)$ or
there may also be other new closed ideals strictly between $\cGb_{\co}(X)$
and $\cGb_{\co}\sur(X)$. Classifying the closed ideals of
$\cB(X)$, one is lead to the following problem.
\begin{problem}
  Let $T^{(m)}\colon \ell_\infty^m (\ell_1^m)\to L_1$ ($m\in\bn$) be a
  uniformly bounded sequence of operators. Is it true that
  \begin{mylist}{(ii)}
  \item
    either the identity operators $\id_{\ell_1^k}$ ($k\in\bn$)
    uniformly factor through the $T^{(m)}$,
  \item
    or the $T^{(m)}$ uniformly approximately factor
    through $\ell_\infty^k$ ($k\in\bn$)?
  \end{mylist}
\end{problem}
Our final result gives a positive answer to this problem in the case
when the entries of the matrix associated to $T^{(m)}$ are
independent, symmetric random variables.
\begin{mainthm}
  \label{mainthm:dichotomy-indep-sym-case}
  For each $m\in\bn$ let $T^{(m)}\colon \ell_\infty^m(\ell_1^m)\to
  L_1$ be an operator such that the entries of the corresponding
  random matrix $\big( T^{(m)}_{i,j}\big)$ form a sequence of
  independent, symmetric random variables with
  \[
  \bignorm{T^{(m)}}=\max \bigg\{ \be \Bigabs{\sum_{i=1}^m
  T^{(m)}_{i,j_i}} :\,j_1,\dots,j_m\in\{1,\dots,m\} \bigg\} \leq
  1\ .
  \]
  Then
  \begin{mylist}{(ii)}
  \item
    either the identity operators $\id_{\ell_1^k}$ ($k\in\bn$)
    uniformly factor through the $T^{(m)}$,
  \item
    or the $T^{(m)}$ uniformly approximately factor
    through $\ell_\infty^k$ ($k\in\bn$).
  \end{mylist}
\end{mainthm}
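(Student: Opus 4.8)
The plan is to reduce the theorem to the general dichotomy of Theorem~\ref{mainthm:general-dichotomy}. Applying that theorem to $\big(T^{(m)}\big)_{m\in\bn}$, we are either in the first alternative, i.e.\ the $\id_{\ell_1^k}$ uniformly factor through the $T^{(m)}$ --- in which case there is nothing to prove --- or else the $T^{(m)}$ have uniform approximate lattice bounds. So the whole content is to show that, when the entries are independent and symmetric with $\norm{T^{(m)}}\leq1$, the lattice-bound alternative already forces the $T^{(m)}$ to factor uniformly approximately through the $\ell_\infty^k$. I would fix $\vare>0$ and a dominating function $g_m\in L_1$ with $\norm{g_m}_{L_1}\leq C_\vare$ and $T^{(m)}\big(B_{\ell_\infty^m(\ell_1^m)}\big)\subseteq\{\abs f\leq g_m\}+\vare B_{L_1}$, and work towards a factorization whose norms depend only on $\vare$.

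The first step is to turn the normalization into an $L_2$-estimate: since for any column selection $j_1,\dots,j_m$ the variables $T^{(m)}_{i,j_i}$ are independent and symmetric, $\sum_iT^{(m)}_{i,j_i}$ is distributed as $\sum_i\epsilon_i\bigabs{T^{(m)}_{i,j_i}}$ for an auxiliary Rademacher sequence $(\epsilon_i)$, so Khintchine's inequality in $L_1$ converts $\norm{T^{(m)}}\leq1$ into $\be\big(\sum_i(T^{(m)}_{i,j_i})^2\big)^{1/2}\lesssim1$ for every selection, hence $\sum_i\max_j\be\big[(T^{(m)}_{i,j})^2\big]\lesssim1$. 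Next I would truncate the entries at a level $\lambda_m$ depending on $m$, writing $T^{(m)}_{i,j}=Y^{(m)}_{i,j}+Z^{(m)}_{i,j}$ with bounded part $Y^{(m)}_{i,j}=T^{(m)}_{i,j}\bi_{\{\abs{T^{(m)}_{i,j}}\leq\lambda_m\}}$. The large part $Z^{(m)}x$ is supported, modulo the $\vare$-error of the lattice bound, on $\{g_m\gtrsim\lambda_m\}$, a set of Lebesgue measure $\leq C_\vare/\lambda_m$ and of $g_m$-mass tending to $0$ as $\lambda_m\to\infty$; combined with the bound $\bignorm{Y^{(m)}x}_{L_2}\lesssim1$ (from the previous estimate and orthogonality of the independent centred $Y^{(m)}_{i,j}$, using $\sum_jx_{i,j}^2\leq1$) this makes $\norm{Z^{(m)}}$ uniformly $\lesssim\vare$ for a suitably large $\lambda_m$, so the large part can be discarded.

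It then remains to factor the bounded-entry operator $Y^{(m)}$. I would choose a finite partition $\cP_m$ of $[0,1]$ refining the dyadic level sets of a simple approximation to $g_m$ and fine enough that $\sum_{i,j}\bignorm{Y^{(m)}_{i,j}-\be[Y^{(m)}_{i,j}\mid\cP_m]}_{L_2}^2$ is as small as we wish --- possible for each fixed $m$ since the $m^2$ entries are bounded and the partition may depend on $m$. Writing $Q_m$ for this conditional expectation, the fluctuations $Y^{(m)}_{i,j}-Q_mY^{(m)}_{i,j}$ are independent and centred, so by symmetrization and Khintchine $\bignorm{Y^{(m)}x-Q_mY^{(m)}x}_{L_1}\lesssim\big(\sum_{i,j}\bignorm{Y^{(m)}_{i,j}-Q_mY^{(m)}_{i,j}}_{L_2}^2\big)^{1/2}$ (again using $\sum_jx_{i,j}^2\leq1$), so $Y^{(m)}$ is close in operator norm to the finite-rank operator $R_m=Q_mY^{(m)}$, whose range lies in $V_m=\spn\{\bi_E:E\in\cP_m\}$. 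Writing $R_mx=\sum_E\beta_E(x)\bi_E$ with $\beta_E(x)=\mu(E)^{-1}\int_EY^{(m)}x$ and $\rho_E=\sup_{\norm x\leq1}\abs{\beta_E(x)}$, one gets a factorization $R_m=B_mA_m$ through $\ell_\infty^{\abs{\cP_m}}$ with $A_mx=(\beta_E(x)/\rho_E)_E$ of norm $1$ and $B_me_E=\rho_E\bi_E$, so that $\norm{B_m}=\sum_E\rho_E\mu(E)=\sum_E\sup_{\norm x\leq1}\bigabs{\int_EY^{(m)}x}=\sum_i\sum_E\max_j\bigabs{\int_EY^{(m)}_{i,j}}$.

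The hard part --- and, I expect, the reason the analogous statement for arbitrary operators (the Problem stated above) is open --- will be bounding $\sum_i\sum_E\max_j\bigabs{\int_EY^{(m)}_{i,j}}$ by a constant depending only on $\vare$: the estimate $\bigabs{\int_EY^{(m)}_{i,j}}\leq\int_Eg_m+\vare$ loses a factor $\abs{\cP_m}$, and $\bigabs{\int_EY^{(m)}_{i,j}}\leq\mu(E)^{1/2}\bignorm{Y^{(m)}_{i,j}}_{L_2}$ loses a factor $\abs{\cP_m}^{1/2}$, so neither is uniform. This is the point where the independence of the entries must be used decisively, and I expect the key to be \emph{choosing} $g_m$ rather than accepting an arbitrary dominating function: if $g_m$ is built from a symmetric function of the whole matrix whose dependence on any single coordinate is small, then each $E\in\cP_m$ is nearly independent of $T^{(m)}_{i,j}$, so $\int_EY^{(m)}_{i,j}$ is, up to a controllable error, $\be[Y^{(m)}_{i,j}]\,\mu(E)=0$, and the inner sums collapse to something dominated by $\max_j\bignorm{T^{(m)}_{i,j}}_{L_1}$, whose sum over $i$ is absorbed by the $L_2$-bound of the second step. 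Granting this, $\norm{A_m}\norm{B_m}\lesssim C_\vare$ while $\norm{Y^{(m)}-R_m}$ and $\norm{Z^{(m)}}$ are $\lesssim\vare$; letting $\vare\to0$ then exhibits the $T^{(m)}$ as uniformly approximately factoring through the spaces $\ell_\infty^{\abs{\cP_m}}$, which is the second alternative.
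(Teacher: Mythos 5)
Your proposal diverges substantially from the paper's argument and, more importantly, has concrete gaps that keep it from being a proof.

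First, the reduction you propose is not the paper's. You apply Theorem~\ref{mainthm:general-dichotomy} to land in the uniform-approximate-lattice-bound alternative and then try to upgrade this to $\ell_\infty^k$-factorization. The paper does not pass through the lattice-bound alternative at all. Instead it sets up its own dichotomy directly on the tail behaviour of the entries: case~(i$'$) roughly says that for some $\vare>0$ and every truncation level~$C$ one can find many pairwise disjoint column-selections whose parts above level~$C$ still have $L_1$-norm at least~$\vare$; case~(ii$'$) is the negation. Case~(i$'$) is routed through the square-function inequality and L\'evy's inequality to produce functions with almost disjoint supports, then Proposition~\ref{prop:almost-disjoint-supp-implies-factorization} (Dor--James) gives alternative~(i). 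Case~(ii$'$) is routed through Theorem~\ref{thm:factorization-2-summing}, the Pietsch/Grothendieck characterization of factorization through $\ell_\infty^k$ via the $2$-summing norm, together with Lemma~\ref{lem:pi-2-norm} and the Hoffmann--J\o rgensen inequality. The constructive factorization you sketch (conditional expectation onto a partition, explicit $A_m,B_m$) plays no role in the paper, and given Section~\ref{sec:perturb-unif-latt-bnd} it is doubtful that the lattice-bound route can be made to work even with good choices of~$g_m$.

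Second, there is a genuine error in your second step. From the normalization and the square-function inequality you correctly get $\be\big(\sum_i(T^{(m)}_{i,j_i})^2\big)^{1/2}\lesssim1$ for every column selection, but the inference ``hence $\sum_i\max_j\be\big[(T^{(m)}_{i,j})^2\big]\lesssim1$'' does not follow: the square-function inequality controls an $L_1$-norm of the square function, not the $L_2$-norms of the individual entries, and in general these individual $L_2$-norms can be infinite even when the $L_1$-norm of each column sum is bounded. Precisely because of this, the paper's $L_2$-control appears only \emph{after} truncating the entries at level~$C$, and there it is the Hoffmann--J\o rgensen inequality that converts the $L_1$-normalization plus the $L_\infty$-truncation into an $L_2$-bound feeding into Lemma~\ref{lem:pi-2-norm}.

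Third, you explicitly concede that the core estimate --- bounding $\sum_i\sum_E\max_j\big|\int_EY^{(m)}_{i,j}\big|$ --- is only a conjecture with a heuristic suggestion (``choose $g_m$ to be nearly independent of each entry''). That is exactly the part that has to carry the proof, so the proposal is incomplete even granting the earlier steps. The paper's mechanism for exploiting independence is entirely different: after splitting the matrix into the finitely many exceptional columns $S^{(1)}$ (which factor through a space $n$-isomorphic to a finite $\ell_\infty$) and the truncated remainder $S^{(2)}$, independence is used via the variance identity $\bignorm{\sum_iS^{(2)}_{i,j_i}}_{L_2}^2=\sum_i\bignorm{S^{(2)}_{i,j_i}}_{L_2}^2$ and via Hoffmann--J\o rgensen, never via a careful choice of dominating function.

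In short: the first reduction is a different (and in light of Section~\ref{sec:perturb-unif-latt-bnd}, risky) route; the $L_2$ estimate in step two is false as stated; and the decisive step is acknowledged to be missing.
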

The problem of classifying the closed ideals of operators on a Banach
space goes back to Calkin who in 1941 proved that the compact
operators are the only non-trivial, proper closed ideal in
$\cB(\ell_2)$~\cite{calkin:41}. The same result was later proved for
all $\ell_p$ spaces ($p$ finite) and for $\co$ by Gohberg, Markus, and
Feldman in 1960~\cite{goh-mar-fel:60}. Remarkably, very little is
known about the closed ideals of $\cB(\ell_p\oplus\ell_q)$, and it is
not even known if there are infinitely many of them. For the most recent
results on the spaces $\ell_p\oplus \ell_q$ the reader is invited to
consult~\cite{sari-schlump-tom-troi:07}. 

In the late 1960's Gramsch~\cite{gramsch:67} and Luft~\cite{luft:68}
independently extended Calkin's theorem in a different direction by
classifying all the closed ideals of $\cB(H)$ for each Hilbert
space~$H$ (not necessarily separable). In particular, they showed that
these ideals are well-ordered by inclusion.

It was not until fairly recently that new examples were added to the
list of Banach spaces for which all of the closed ideals of
operators can be determined. In 2004 Laustsen, Loy, and
Read~\cite{laus-loy-read:03} proved that for the Banach space $E =
\big(\bigoplus_{n=1}^\infty\ell_2^n\big)_{\co}$ there are exactly four
closed ideals of $\cB(E)$, namely $\{0\}$, the compact
operators $\cK(E)$, the closure $\cGb_{\co}(E)$ of the set
of operators factoring through $\co$, and $\cB(E)$ itself. A similar
result was subsequently obtained by Laustsen, Schlumprecht and Zs\'ak
for the dual space
$F=\big(\bigoplus_{n=1}^\infty\ell_2^n\big)
_{\ell_1}$~\cite{laus-schlump-zsak:06}. In
2006 Daws~\cite{daws:06} extended Gramsch and Luft's result to the
Gohberg--Markus--Feldman case by classifying the closed ideals of
$\cB(\ell_p(I))$ (for $p$ finite) and $\cB(\co(I))$ where $I$
is an index set of arbitrary cardinality. Again, these ideals are
well-ordered by inclusion. Recently Argyros and Haydon
constructed a space that solves the famous compact-plus-scalar
problem: every operator on their space is a compact perturbation of a
scalar multiple of the identity operator. This remarkable
space has many interesting properties. In particular, as this space
also has a basis, the compact operators are the only non-trivial,
proper closed ideal of the algebra of all operators.

Our paper is organized as follows. In Section~\ref{sec:prelim} we
sketch the proofs of the more straightforward parts of
Theorem~\ref{mainthm:ideal-structure}. We also reduce the ideal
classification problem to the problem stated above (preceding the
statement of Theorem~\ref{mainthm:dichotomy-indep-sym-case}), and we
introduce 
the notions of uniform factorization and uniform approximate
factorization. In Section~\ref{sec:general-dichotomy} we define the
notions of uniform lattice bounds and uniform approximate lattice
bounds, and we prove Theorem~\ref{mainthm:general-dichotomy}. In
Section~\ref{sec:unique-max-ideal} we complete the proof of
Theorem~\ref{mainthm:ideal-structure}. The general dichotomy theorem,
Theorem~\ref{mainthm:general-dichotomy}, gives rise to a very natural
conjecture that would solve the ideal classification problem
completely. In Section~\ref{sec:perturb-unif-latt-bnd} we present a
counterexample to this conjecture. Section~\ref{sec:independent-case}
contains a proof of Theorem~\ref{mainthm:dichotomy-indep-sym-case}.

We use standard Banach space terminology throughout. For convenience
we shall work with real scalars. All our results extend without
difficulty to the complex case. The sign $\abs{\cdot}$ will be used
for absolute value (of a number or a function) as well as for the size
of a finite set. Finally, we denote by $\bi_A$ the indicator function
of a set $A$, and use the probabilistic notation $\bp$ for Lebesgue
measure on $[0,1]$.

\section{Preliminary results}
\label{sec:prelim}

Throughout this paper we fix $X$ to be the Banach space $\thespace$. In
this section we first prove those
parts of Theorem~\ref{mainthm:ideal-structure} that follow easily from
standard basis arguments. We then reduce the problem of finding the
closed ideal structure of $\cB(X)$ to a question about sequences of
operators defined on finite $\ell_\infty$-direct sums of $\ell_1$-spaces with
values in $L_1$ (this reduction will also follow easily from standard
basis arguments). We shall also be introducing definitions and
notations to be used throughout the paper.

We shall only give sketch proofs. The results in this section extend
without difficulty to more general unconditional sums of
finite-dimensional spaces. For detailed proofs in the general case, we
refer the reader to~\cite{laus-loy-read:03}.
\begin{prop}
  \label{prop:ideal-structure-easy}
  We have the following closed ideals in $\cB(X)$:
  \[
  \{0\} \subsetneq \cK(X) \subsetneq \cGb_{\co}(X)
  \subsetneq \cB(X)\ .
  \]
  Moreover, if $T$ is a non-compact operator on $X$, then the closed
  ideal generated by $T$ contains $\cGb_{\co}(X)$. It
  follows that any closed ideal of $\cB(X)$ not in the above list must
  lie strictly between $\cGb_{\co}(X)$ and $\cB(X)$.
\end{prop}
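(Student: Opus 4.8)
\emph{Proof plan.} The plan is to derive the statement from a handful of standard facts about operator ideals together with the one structural feature of $X=\thespace$ that is needed here: every normalized block basis $(x_k)$ of the canonical finite-dimensional decomposition $(\ell_1^n)_n$ is $1$-equivalent to the unit vector basis of $\co$ and spans a subspace that is $(1+\vare)$-complemented in $X$ --- the first point because in a $\co$-sum the norm of a disjointly supported vector is the maximum of the norms of its pieces, the second by a Hahn--Banach construction of the complementing projection from norm-one functionals supported on the successive blocks. I will write $Q_N$ for the natural projection of $X$ onto the first $N$ summands, $J_0\colon\co\to X$ for the canonical isometric embedding onto the span of the leading unit vectors of the spaces $\ell_1^n$, and $\pi_0\colon X\to\co$ for the associated norm-one projection, so that $\pi_0 J_0=\id_{\co}$.

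First I would settle the displayed chain. That $\cK(X)$ is a proper non-zero closed ideal is clear; the usual rank-one argument shows that every non-zero closed ideal contains all finite-rank operators, and since $X$ has a monotone basis --- hence the approximation property --- this identifies $\cK(X)$ with the closed linear span of the finite-rank operators and makes it the smallest non-zero closed ideal. The operators factoring through $\co$ form a two-sided operator ideal (using $\co\oplus\co\cong\co$), so $\cGb_{\co}(X)$ is a closed ideal, and it contains $\cK(X)$ because every finite-rank operator factors through a finite-dimensional, hence $\co$-embeddable, space. The inclusion is strict because $J_0\pi_0$ lies in $\cG_{\co}(X)\setminus\cK(X)$; and $\cGb_{\co}(X)\subsetneq\cB(X)$ because $\id_X\notin\cGb_{\co}(X)$: were $\id_X$ a norm-limit of operators factoring through $\co$, then composing on both sides with the norm-one inclusion $\ell_1^n\hookrightarrow X$ and the coordinate projection $X\to\ell_1^n$ would force the identities $\id_{\ell_1^n}$ to factor through $\co$ with a constant independent of $n$, which is impossible since the $\co$-factorization constants of $\id_{\ell_1^n}$ (equivalently, the projection constants of $\ell_1^n$) are unbounded.

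The real content is that the two-sided ideal $\langle T\rangle$ generated by a non-compact operator $T$ satisfies $\cGb_{\co}(X)\subseteq\overline{\langle T\rangle}$. If $\norm{T(\id-Q_N)}\to 0$ then $T=\lim_N TQ_N$ is a limit of finite-rank operators, hence compact; so $\norm{T(\id-Q_N)}\not\to 0$, and a routine gliding-hump construction yields a normalized block basis $(x_k)$ of $(\ell_1^n)_n$ with $(Tx_k)$ seminormalized. The decomposition $(\ell_1^n)_n$ is shrinking --- its dual $\big(\bigoplus_n\ell_\infty^n\big)_{\ell_1}$ is the closed span of the coordinate functionals --- so $(x_k)$, and hence $(Tx_k)$, is weakly null; by the principle of small perturbations we may pass to a subsequence (still denoted $(x_k)$) so that $(Tx_k)$ is, up to an arbitrarily small perturbation, a normalized block basis of $(\ell_1^n)_n$, and therefore $(1+\vare)$-equivalent to the $\co$-basis and $(1+\vare)$-complemented in $X$. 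Let $S\colon\co\to X$ send the $k$-th unit vector to $x_k$ (an isometry, since $(x_k)$ is a normalized block basis) and let $R\colon X\to\co$ be the composition of the complementing projection onto $[Tx_k]$ with the isomorphism $[Tx_k]\to\co$ carrying $Tx_k$ to the $k$-th unit vector; then $RTS=\id_{\co}$, whence $J_0\pi_0=(J_0R)\,T\,(S\pi_0)\in\langle T\rangle$. Finally, for any $W=DC$ with $C\colon X\to\co$ and $D\colon\co\to X$ the identity $W=(D\pi_0)(J_0\pi_0)(J_0C)$ shows $W\in\langle T\rangle$; hence $\cG_{\co}(X)\subseteq\langle T\rangle$, and passing to closures gives $\cGb_{\co}(X)\subseteq\overline{\langle T\rangle}$.

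The final assertion is then formal: if $\cJ$ is a closed ideal not in the list, then either $\cJ$ consists of compact operators, so that $\{0\}\subsetneq\cJ\subseteq\cK(X)$ contradicts the minimality of $\cK(X)$, or $\cJ$ contains a non-compact operator, whence $\cGb_{\co}(X)\subseteq\cJ$ by the previous paragraph and, since $\cJ$ equals neither $\cGb_{\co}(X)$ nor $\cB(X)$, we conclude $\cGb_{\co}(X)\subsetneq\cJ\subsetneq\cB(X)$. I expect the only genuine work to be the gliding-hump extraction of $(x_k)$ and the verification that a subsequence of $(Tx_k)$ spans a uniformly complemented copy of $\co$ inside $X$; both are standard for $\co$-sums of finite-dimensional spaces, so I would only indicate them here, referring to \cite{laus-loy-read:03} for the details in the general unconditional setting.
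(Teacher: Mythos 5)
Your argument is correct and follows the same overall route as the paper: $\cK(X)$ is the smallest non-zero closed ideal because $X$ has a basis; the strictness $\cK(X)\subsetneq\cGb_{\co}(X)$ comes from a non-compact projection onto a copy of $\co$; and for a non-compact $T$ one runs a gliding-hump argument to extract a block sequence $(x_k)$ with $(Tx_k)$ seminormalized and, after a small perturbation, equivalent to the $\co$-basis and spanning a complemented subspace, so that $\id_{\co}$ factors through $T$ and hence $\cG_{\co}(X)\subseteq\langle T\rangle$. The one place you genuinely diverge is the verification that $\cGb_{\co}(X)\subsetneq\cB(X)$: the paper invokes the fact that an idempotent in the closure of an ideal already lies in the ideal, deducing that $X$ would have to be isomorphic to $\co$, which fails because $\ell_1$ has cotype $2$; you instead localize, compressing an approximate $\co$-factorization of $\id_X$ to the summands $\ell_1^n$ and appealing to the unboundedness of the projection constants of $\ell_1^n$. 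Both are correct and of comparable length; the paper's version is slicker in that it avoids any quantitative estimate by passing to the global isomorphism type, whereas yours is a touch more self-contained since it needs only the finite-dimensional fact about $\ell_1^n$ rather than the cotype obstruction for the whole space.
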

\begin{proof}
  Since $X$ has a basis, the compact operators are the smallest
  non-trivial closed ideal of $\cB(X)$, and the inclusion
  $\cK(X)\subset\cGb_{\co}(X)$ follows. (Note, however, that not every
  compact operator on $X$ factors through $\co$.) This inclusion is
  strict, since $\co$ is complemented in $X$ and a projection onto
  a copy of $\co$ is a non-compact operator in $\cGb_{\co}(X)$. 

  We next show that $\cGb_{\co}(X)\neq \cB(X)$. Recall that if an
  idempotent element of a Banach algebra belongs to the
  closure of an ideal $I$, then in fact it belongs to $I$. Thus, if
  $\cGb_{\co}(X)=\cB(X)$, then the identity operator on $X$ factors through
  $\co$, \ie $X$ is complemented in $\co$, and thus isomorphic to
  it. It is well known, however, that $X$ is not isomorphic to $\co$
  (\eg because $\ell_1$ has cotype~2).

  Finally, let $T$ be a non-compact operator on $X$. To complete the
  proof it is enough to show that the identity on $\co$ factors
  through $T$. Let $(x_n)$ be a bounded sequence in $X$ such that
  $(Tx_n)$ has no convergent subsequence. After passing to a
  subsequence we can assume that both $(x_n)$ and $(Tx_n)$ converge
  coordinatewise (with respect to the obvious basis of $X$).
  We then extract a
  further subsequence for which the difference sequence
  $(Tx_n-Tx_{n+1})$ is bounded away from zero. This way we obtain a
  sequence $(y_n)$ in $X$ such that both $(y_n)$ and $(Ty_n)$ converge
  to zero coordinatewise and $(Ty_n)$ is bounded away from
  zero. We can then pass to a further subsequence such that $(y_n)$
  and $(Ty_n)$ are basic sequences equivalent to the unit vector
  basis of $\co$ and such that their closed linear spans are
  complemented in $X$. It is now straightforward that $\id_{\co}$
  factors through $T$.
\end{proof}
For $n\in\bn$ we let $J_n\colon\ell_1^n\to X$ be the canonical embedding
given by $J_nx=(y_i)$ where $y_n=x$ and $y_i=0$ for $i\neq
n$. For each $m\in\bn$ the map $Q_m\colon X\to\ell_1^m$ denotes
the canonical quotient map defined by $Q_m(y)=y_m$ for $y=(y_i)\in
X$. We introduce projections $P_n=J_nQ_n\in\cB(X)$ for $n\in\bn$, and
$P_A(x)=\sum_{n\in A} P_nx$ for $A\subset\bn$ and $x\in X$.

For an operator $T\colon X\to X$ we let
$T_{m,n}=Q_mTJ_n\colon \ell_1^n\to \ell_1^m$. We can identify $T$ with
the infinite matrix
$\big(T_{m,n}\big)$: if $Tx=y$, then $y_m=\sum_n T_{m,n}x_n$. We say
that $T$ is \emph{locally finite }if the sets
$\{j\in\bn:\,T_{m,j}=0\}$ and $\{i\in\bn:\,T_{i,n}\}$ are finite for
all $m,n\in\bn$, \ie if $T$ has finitely supported rows and columns.
\begin{lem}
  \label{lem:loc-finite-perturbation}
  For any $T\in\cB(X)$ and $\vare>0$ there is a compact operator
  $K\in\cB(X)$ such that $\norm{K}<\vare$ and $T+K$ is locally finite.
\end{lem}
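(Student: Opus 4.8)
The strategy is to replace $T$ by the ``band part'' of its matrix $(T_{m,n})$ relative to a rapidly increasing sequence of indices. A matrix supported on such a band has finitely supported rows and columns and so defines a locally finite operator; the work is to choose the band so that the discarded part of $T$ is small in norm and compact. Throughout, for $N\in\bn$ write $P_{[1,N]}=P_{\{1,\dots ,N\}}$ and $P_{(N,\infty)}=\id_X-P_{[1,N]}$, and $B_j=\{N_{j-1}+1,\dots ,N_j\}$ once the $N_j$ are chosen.

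The construction rests on two decay estimates, one for each ``side'' of $X$. \emph{(a)} For every fixed $N$, $\norm{Q_m T P_{[1,N]}}\to 0$ as $m\to\infty$: indeed $TP_{[1,N]}$ has finite rank, so we may write $TP_{[1,N]}x=\sum_i\varf_i(x)y_i$ with $\varf_i\in X^*$ and $y_i\in X$, and since $X$ is a $\co$-sum each $\norm{Q_m y_i}\to 0$, whence $\norm{Q_m T P_{[1,N]}}\le\big(\sum_i\norm{\varf_i}\big)\max_i\norm{Q_m y_i}\to 0$. \emph{(b)} For every fixed $m$, $\norm{Q_m T P_{(N,\infty)}}\to 0$ as $N\to\infty$: here $Q_m T$ has finite rank, say $Q_m Tx=\sum_{i=1}^m\varf_i(x)e_i$ with $\varf_i\in X^*$ and $e_i$ the unit vectors of $\ell_1^m$, and since the decomposition $(\ell_1^n)$ of $X$ is shrinking (equivalently $X^*=\big(\bigoplus_n\ell_\infty^n\big)_{\ell_1}$) the tails $P_{(N,\infty)}^*\varf_i$ are norm-null, so $\norm{Q_m T P_{(N,\infty)}}\le\sum_i\norm{P_{(N,\infty)}^*\varf_i}\to 0$.

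Fix $\vare>0$ and put $\vare_\ell=\vare\,2^{-\ell-1}$. Using \emph{(a)} and \emph{(b)} I build $0=N_0<N_1<N_2<\cdots$ recursively: having chosen $N_0,\dots ,N_{\ell-1}$, pick $N_\ell>N_{\ell-1}$ so large that, for $\ell\ge 2$, (i) $\norm{Q_m T P_{[1,N_{\ell-1}]}}<\vare_\ell$ whenever $m>N_\ell$, and (ii) $\norm{Q_m T P_{(N_\ell,\infty)}}<\vare_\ell$ whenever $N_{\ell-2}<m\le N_{\ell-1}$; this is possible by \emph{(a)} with $N=N_{\ell-1}$ and by \emph{(b)} applied to the finitely many $m\le N_{\ell-1}$. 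Now set
\[
K_1=\sum_{j\ge 3}P_{B_j}T\,P_{[1,N_{j-2}]},\qquad K_2=\sum_{j\ge 1}P_{B_j}T\,P_{(N_{j+1},\infty)},\qquad E=K_1+K_2,\qquad D=T-E .
\]
For $m\in B_j$ we have $Q_m K_1=Q_m T P_{[1,N_{j-2}]}$ (and $Q_m K_1=0$ when $j\le 2$) and $Q_m K_2=Q_m T P_{(N_{j+1},\infty)}$, so (i) with $\ell=j-1$ and (ii) with $\ell=j+1$ give $\norm{Q_m K_1}<\vare_{j-1}$ and $\norm{Q_m K_2}<\vare_{j+1}$. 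Taking suprema over $m$ yields $\norm{E}\le\norm{K_1}+\norm{K_2}\le 2\sup_{\ell\ge 2}\vare_\ell=\vare/4<\vare$. Moreover each $P_{B_j}T$ has finite rank (its range lies in the finite-dimensional space $P_{B_j}X$), so the partial sums of $K_1$ and $K_2$ are finite-rank operators, and the same supremum estimate bounds the norm of the tail of $K_1$ (resp.\ $K_2$) beyond the $J$-th term by $\sup_{j>J}\vare_{j-1}$ (resp.\ $\sup_{j>J}\vare_{j+1}$), which tends to $0$. Hence $K_1$, $K_2$, and therefore $E$, are compact. Put $K=-E=D-T$: this is a compact operator with $\norm{K}<\vare$ and $T+K=D$.

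It remains to see that $D$ is locally finite, which is bookkeeping with the band. For $m\in B_j$ one reads off from the formulas that $(K_1)_{m,n}=T_{m,n}$ exactly when $n\le N_{j-2}$ and $(K_2)_{m,n}=T_{m,n}$ exactly when $n>N_{j+1}$; as these two ranges of $n$ are disjoint, $D_{m,n}=T_{m,n}-K_{m,n}$ vanishes unless $N_{j-2}<n\le N_{j+1}$ (with the convention $N_{-1}=N_0=0$), so every row of $D$ is finitely supported. The symmetric computation, fixing a column index $n\in B_k$ and noting that $P_{B_j}T\,P_{[1,N_{j-2}]}$ contributes $T_{m,n}$ to column $n$ precisely for $m$ in blocks $B_j$ with $j\ge k+2$, while $P_{B_j}T\,P_{(N_{j+1},\infty)}$ does so precisely for $j\le k-2$, shows $D_{m,n}$ vanishes unless $N_{k-2}<m\le N_{k+1}$, so every column of $D$ is finitely supported as well. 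I would flag the recursion as the real crux: conditions (i) and (ii) constrain $N_\ell$ against different data — (i) against the already-fixed $N_{\ell-1}$, (ii) against the two blocks two steps back — and one must see that they can be met at once and that the resulting block structure makes rows \emph{and} columns finite simultaneously while keeping the discarded mass uniformly small (for the norm bound) and spread over growing blocks (for compactness). Estimates \emph{(a)} and \emph{(b)} are the only points where the specific space $X=\thespace$ enters, \emph{(a)} using that $X$ is a $\co$-sum and \emph{(b)} that its finite-dimensional decomposition is shrinking.
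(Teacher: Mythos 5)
Your proof is correct, and it takes a genuinely different route from the paper's. The paper makes two \emph{sequential} perturbations: first it cuts off the tail of each column (using compactness of $B_{\ell_1^n}$ to find $N_n$ with $\norm{(I-P_{\{1,\dots,N_n\}})TJ_n}<\vare_n$, then setting $K=\sum_n(I-P_{\{1,\dots,N_n\}})TJ_n$, compact with $\norm K<\sum_n\vare_n<\vare$), which makes columns finite; then it repeats symmetrically on the rows of $T-K$ (using compactness of $B_{\ell_\infty^m}$ and the shrinking property). You instead build a single \emph{band} $D$ around the diagonal, choosing a rapidly increasing sequence $N_0<N_1<\cdots$ so that both the left-of-band and right-of-band pieces $K_1,K_2$ are small and compact simultaneously, and this makes rows \emph{and} columns finite in one step. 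Your decay estimates \emph{(a)} and \emph{(b)} are the same facts the paper uses (finite rank $\Rightarrow$ tails of $Q_m$ vanish; shrinking decomposition $\Rightarrow$ adjoint tails vanish), just derived via dual representations rather than compactness of the small unit balls. One small structural difference worth noting: because each $m$ lands in exactly one block $B_j$, your norm bound $\norm{K_1}\le\sup_j\vare_{j-1}$ is a supremum rather than the paper's sum $\sum_n\vare_n$, so your argument only requires $\vare_\ell\to 0$ for the norm bound; the summability (or more precisely $\vare_\ell\to 0$) is used only in the tail estimate for compactness. The paper's approach is shorter to write down; yours has the pedagogical advantage of producing the banded operator explicitly and making it visible why the two index constraints can be met at once. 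Both are sound.
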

\begin{proof}
  Fix a sequence $(\vare_i)$ in $(0,1)$ with $\sum_i
  \vare_i<\vare$. Let $n\in\bn$. For each $x\in \ell_1^n$ there exists
  $N(n,x)\in\bn$ such that
  $\norm{(I-P_{\{1,\dots,N\}})TJ_nx}<\vare_n/2$ for all $N\geq
  N(n,x)$. By compactness of $B_{\ell_1^n}$, there exists $N_n\in\bn$
  such that
  $\norm{(I-P_{\{1,\dots,N_n\}})TJ_n}<\vare_n$. Then the operator
  $K=\sum_n(I-P_{\{1,\dots,N_n\}})TJ_n$ is compact, $\norm{K}<\vare$ and
  $T-K$ has finite columns.

  Next fix $m\in\bn$. Since the unit vector basis of $\co$ is
  shrinking, for each $f\in\ell_\infty^m$ there exists
  $M(m,f)\in\bn$ such that
  $\norm{fQ_mT(I-P_{\{1,\dots,M\}})}<\vare_m/2$ for all $M\geq
  M(m,f)$. By compactness of $B_{\ell_\infty^m}$, there
  exists $M_m\in\bn$ such that
  $\norm{fQ_mT(I-P_{\{1,\dots,M_m\}})}<\vare_m\norm{f}$ for all
  $f\in\ell_\infty^m$ and hence, by Hahn--Banach,
  $\norm{Q_mT(I-P_{\{1,\dots,M_m\}})}\leq\vare_m$. As before, we now
  obtain a compact operator $K$ such that $\norm{K}<\vare$ and $T-K$
  has finite rows.
\end{proof}
\begin{defn}
Given families $\big( U_i\colon E_i\to F_i\big)_{i\in I}$ and $\big(
V_j\colon G_j\to H_j\big)_{j\in J}$ of operators between Banach
spaces, we say \emph{the $U_i$ uniformly factor through the $V_j$ }(or
that \emph{the $V_j$ uniformly factor the $U_i$}) if
\[
\E C>0\quad \V i\in I\quad \E j_i\in J\,,\ A_i\colon E_i\to
G_{j_i}\,,\ B_i\colon H_{j_i}\to F_i
\]
\[
\text{such that}\quad
U_i=B_iV_{j_i}A_i\quad \text{and}\quad \norm{A_i}\cdot\norm{B_i}\leq
C\ .
\]
We say \emph{the $U_i$ uniformly approximately factor through
  the $V_j$ }(or that \emph{the $V_j$ uniformly approximately factor
  the $U_i$}) if
\[
\V\vare>0\quad \E C>0\quad \V i\in I\quad \E j_i\in J\,,\ A_i\colon
  E_i\to G_{j_i}\,,\ B_i\colon H_{j_i}\to F_i
\]
\[
\text{such
that}\quad \norm{U_i-B_iV_{j_i}A_i}<\vare\quad\text{and}\quad
\norm{A_i}\cdot\norm{B_i}\leq C\ .
\]
\end{defn}
If $G_j=H_j$ and $V_j$ is the identity operator $\id_{G_j}$ on $G_j$
for all $j\in J$, then we will also use the term \emph{factoring
through the }$G_j$ instead of factoring through the $\id_{G_j}$, \etc

For a family $\big( U_i\colon E_i\to F_i\big)_{i\in I}$ of operators
with $\sup_{i\in I} \norm{U_i}<\infty$
we write $\diag(U_i)_{i\in I}$ for the diagonal operator
$\big(\bigoplus _{i\in I} E_i\big)_{\co}\to \big(\bigoplus _{i\in I}
F_i\big)_{\co}$ given by $(x_i)_{i\in I}\mapsto (U_ix_i)_{i\in I}$.

Now let $T\in\cB(X)$ be a locally finite operator. For $m\in\bn$ we
let $R_m$  be the support of the $m^{\text{th}}$ row of $T$: this is
the finite set
$R_m=\{j\in\bn:\,T_{m,j}\neq 0\}$. We set $X_m=\big(\bigoplus_{j\in
  R_m}\ell_1^j\big)_{\ell_\infty}$ and let $J^{(m)}\colon X_m\to X$
and $Q^{(m)}\colon X\to X_m$ be the canonical embedding and quotient
maps given by $J^{(m)}\big( (x_j)_{j\in R_m}\big)=\sum_{j\in R_m}
J_j(x_j)$ and $Q^{(m)}(x)=\big( Q_j(x)\big)_{j\in R_m}$,
respectively. We define $T^{(m)}\colon X_m\to\ell_1^m$ to be the
$m^{\text{th}}$ row of $T$ ignoring the zero entries, \ie $T^{(m)}$
maps $x=(x_j)_{j\in R_m}$ to $Q_mTJ^{(m)}(x)=\sum_{j\in R_m}
T_{m,j}x_j$.

One final piece of notation before we relate factorization properties
of $T$ to those of the sequence $\big(T^{(m)}\big)$: for subsets $A$
and $B$ of $\bn$ we write $A<B$ if $a<b$ for all $a\in A$ and
$b\in B$.
\begin{prop}
  \label{prop:reduction-to-fd-case}
  Let $T\in\cB(X)$ be a locally finite operator. 
  \begin{mylist}{(ii)}
  \item
    If the $T^{(m)}$ uniformly factor the identity operators
    $\id_{\ell_1^k}$ ($k\in\bn$), then $T$ factors the identity
    operator on $X$.
  \item
    $T$ approximately factors through $\co$ if and only if the
    $T^{(m)}$ uniformly approximately factor through $\ell_\infty^n$
    ($n\in\bn$).
  \end{mylist}
\end{prop}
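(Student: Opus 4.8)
The plan is to transfer factorization properties between $T$ and the sequence $\bigl(T^{(m)}\bigr)$ by assembling or restricting operators block by block, using that $X=\thespace$ is a $\co$-sum and that a locally finite $T$ has finite columns. For part~(i), suppose $\id_{\ell_1^k}=B_kT^{(m_k)}A_k$ with $\norm{A_k}\cdot\norm{B_k}\le C$; after rescaling the $A_k,B_k$ I may assume $\norm{A_k},\norm{B_k}\le\sqrt C$. The first step is to thin out the index set of $k$'s so that the row-supports $R_{m_k}$ become pairwise disjoint, in fact successive ($R_{m_1}<R_{m_2}<\dots$). This is the one point where local finiteness is essential: if at some stage no admissible next index could be chosen, then cofinitely many of the remaining $R_{m_k}$ would meet a fixed finite set, so by pigeonhole some column index $\ell$ would lie in $R_{m_k}$ for infinitely many $k$; but $\{i:T_{i,\ell}\ne0\}$ is finite, so a single row index $i_0$ would satisfy $m_k=i_0$ for infinitely many $k$, forcing $T^{(i_0)}\colon X_{i_0}\to\ell_1^{i_0}$ to factor $\id_{\ell_1^k}$ for arbitrarily large~$k$, which is impossible since $\operatorname{rank}T^{(i_0)}\le i_0$. (In particular the $m_k$ are distinct, and after reindexing $m_k\to\infty$.)

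With the $R_{m_k}$ pairwise disjoint, define $A=\sum_k J^{(m_k)}A_kQ_k\colon X\to X$ and $B=\sum_k J_kB_kQ_{m_k}\colon X\to X$. The disjointness makes both well-defined bounded operators of norm $\le\sqrt C$ into the $\co$-sum $X$ (one also uses that $\norm{Q^{(m)}x}\to0$ by finiteness of columns and that $\norm{Q_{m_k}y}\to0$ since $m_k\to\infty$), and it forces $Q_{m_k}TJ^{(m_j)}=0$ for $j\ne k$, because the entries of this operator are among the $T_{m_k,\ell}$ with $\ell\in R_{m_j}$, all of which vanish as $R_{m_j}\cap R_{m_k}=\emptyset$. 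Hence every cross term in $BTA$ drops out and $BTA=\sum_k J_kB_kT^{(m_k)}A_kQ_k=\sum_k J_kQ_k=\id_X$, proving~(i). This subsequence extraction is the main obstacle; the remaining steps are routine bookkeeping with $\co$-sums.

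For part~(ii), consider first the implication ($\Leftarrow$). Given $\vare>0$, take uniform approximate factorizations $T^{(m)}\approx B_mA_m$ through $\ell_\infty^{n_m}$ with $\norm{T^{(m)}-B_mA_m}<\vare$ and $\norm{A_m},\norm{B_m}\le\sqrt C$, and set $A\colon X\to\bigl(\bigoplus_m\ell_\infty^{n_m}\bigr)_{\co}$, $x\mapsto\bigl(A_mQ^{(m)}x\bigr)_m$, and $B\colon\bigl(\bigoplus_m\ell_\infty^{n_m}\bigr)_{\co}\to X$, $(v_m)_m\mapsto(B_mv_m)_m$. Finiteness of the columns of $T$ gives $\norm{Q^{(m)}x}\to0$, so $A$ really maps into the $\co$-sum; one checks $\norm A,\norm B\le\sqrt C$, and since $\bigl((T-BA)x\bigr)_m=(T^{(m)}-B_mA_m)Q^{(m)}x$ the rowwise estimate yields $\norm{T-BA}\le\vare$. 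As $\bigl(\bigoplus_m\ell_\infty^{n_m}\bigr)_{\co}$ is isometrically $\co$ (each summand is finite-dimensional, so a $\co$-null condition on blocks is the same as one on coordinates), $T$ approximately factors through $\co$.

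For the implication ($\Rightarrow$), write $T\approx BA$ with $A\colon X\to\co$, $B\colon\co\to X$, $\norm A\cdot\norm B\le C$ and $\norm{T-BA}<\vare$. Restricting to the $m$-th row gives $T^{(m)}=Q_mTJ^{(m)}\approx(Q_mB)(AJ^{(m)})$ with error $<\vare$, a factorization through $\co$, and it remains to replace $\co$ by a finite-dimensional $\ell_\infty$-space uniformly in $m$. Since $X_m$ is finite-dimensional, $B_{X_m}$ is compact, so there is a finite set $F_m\subset\bn$ with $\norm{AJ^{(m)}-\Pi_{F_m}AJ^{(m)}}<\vare/\norm B$, where $\Pi_{F_m}\colon\co\to\co$ is the coordinate projection onto $F_m$; its range is isometrically $\ell_\infty^{\abs{F_m}}$. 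Then $T^{(m)}\approx\bigl(Q_mB|_{\Pi_{F_m}(\co)}\bigr)\bigl(\Pi_{F_m}AJ^{(m)}\bigr)$ with total error $<2\vare$ and norm product still $\le C$, uniformly in $m$; this is the desired approximate factorization through $\ell_\infty^{\abs{F_m}}$, and applying the hypothesis with $\vare$ replaced by $\vare/2$ finishes part~(ii).
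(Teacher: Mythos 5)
Your proof is correct and follows essentially the same route as the paper: thin out the indices so the row supports become successive, assemble the block factorizations into a global one using the $\co$-sum structure, and in the converse of (ii) truncate $AJ^{(m)}$ to finitely many coordinates by compactness of $B_{X_m}$. The main difference is that you make explicit the two points the paper glosses over — why the subsequence with $R_{m_1}<R_{m_2}<\dots$ can be extracted (combining finite columns with the rank bound $\operatorname{rank}T^{(i_0)}\le i_0$) and the coordinate-projection argument behind the paper's appeal to ``$\co$ is a $\cL_\infty$-space.''
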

\begin{proof}
  (i)~By the assumption, there exist $C>0$, positive integers $m_1<m_2<\dots$
  and operators $A_k\colon \ell_1^k\to X_{m_k}$ and $B_k\colon \ell_1^{m_k}\to
  \ell_1^k$ such that $\id_{\ell_1^k}=B_kT^{(m_k)}A_k$ and
  $\norm{A_k}\cdot\norm{B_k}\leq C$ for every $k\in\bn$. We may
  assume, after passing to a
  subsequence if necessary, that $R_{m_1}<R_{m_2}<\dots$, so in
  particular the $m_j^{\text{th}}$ and $m_k^{\text{th}}$ rows of $T$
  have disjoint support whenever $j\neq k$. Observe that the identity
  operator
  $\id_X=\diag \big(\id_{\ell_1^k}\big)$ factors through the diagonal
  operator
  \[
  \Tt=\diag (T^{(m_k)})\colon \big( \bigoplus_k X_{m_k}
  \big)_{\co} \rTo \big( \bigoplus_k \ell_1^{m_k}\big)_{\co}\ .
\]
  Indeed,
  we have $\id_X=B\Tt A$, where $A=\mathrm{diag}(A_k)$ and
  $B=\mathrm{diag}(B_k)$. It is therefore sufficient to show that
  $\Tt$ factors through $T$. Define $\At\colon \big(
  \bigoplus_k X_{m_k} \big)_{\co} \to X$ by $(x_k)\mapsto \sum_k
  J^{(m_k)}(x_k)$ and $\Bt\colon X\to \big( \bigoplus_k
  \ell_1^{m_k}\big)_{\co}$ by $x\mapsto
  \big(Q_{m_k}(x)\big)_{k=1}^\infty$. That $\At$ is well-defined
  follows from the assumption $R_{m_1}<R_{m_2}<\dots$. Note that we
  have $\Tt=\Bt T\At$, as required.

  \noindent
  (ii)~Assume the $T^{(m)}$ uniformly approximately factor through
  $\ell_\infty^n$ ($n\in\bn$). Then $\Tt=\mathrm{diag}(T^{(m)})$
  approximately factors through $\big(\bigoplus _k
  \ell_\infty^{n_k}\big)_{\co}$ for some $n_1<n_2<\dots$. This latter space
  is isomorphic to $\co$, so it is enough to observe that $T$ factors
  through $\Tt$. Indeed, $T=\Tt Q$, where $Qx=\big( Q^{(m)}(x)
  \big)$ for $x\in X$.

  The converse implication is clear since each $T^{(m)}$ factors
  through $T$, and $\co$ is a $\cL_\infty$-space.
\end{proof}

\section{The general dichotomy theorem}
\label{sec:general-dichotomy}

In this section we begin our study of factorization properties of
sequences of operators $T_m\colon X_m\to L_1$ ($m\in\bn$) where
$(X_m)$ is a sequence of \emph{arbitrary }Banach spaces. We will prove a
dichotomy theorem in this general setting. In the next section we
shall apply this to an operator $T$ on our space
$X=\thespace$: the $T_m$
will be the rows $T^{(m)}$ of $T$ (as defined before
Proposition~\ref{prop:reduction-to-fd-case}). Before stating our main
theorem we need a definition.
\begin{defn}
  Let $T_i\colon X_i\to L_1$ ($i\in I$) be a family of operators. We
  say the $T_i$ \emph{have uniform lattice bounds }if
\[
\E C>0\quad\V i\in I\quad \E g_i\in
  L_1\quad\text{with}\quad \norm{g_i}_{L_1}\leq C\quad\text{and}\quad
  T_i\big( B_{X_i}\big) \subset \{f\in L_1:\,\abs{f}\leq g_i\}
\]
(\ie $\abs{T_ix}\leq g_i$ for all $x\in B_{X_i}$). The family
$(g_i)_{i\in I}$ is a \emph{uniform lattice bound for the $T_i$}.
 
  We say the $T_i$ \emph{have uniform approximate lattice
    bounds }if
\[
\V\vare>0\ \E C>0\ \V i\in I\ \E g_i\in
    L_1^+\ \text{with}\ \norm{g_i}_{L_1}\leq C\ \text{and}\
T_i\big( B_{X_i}\big) \subset \{f\in L_1:\,\abs{f}\leq g_i\} + \vare
B_{L_1}
\]
(\ie $\bignorm{\big(\abs{T_ix}-g_i\big)^+}_{L_1}\leq\vare$ for all $x\in
  B_{X_i}$). The family $(g_i)_{i\in I}$ is a \emph{uniform
  approximate lattice bound for the $T_i$ corresponding to $\vare$}.
\end{defn}

We now come to one of the main results in this paper, which yields, as
a special case, Theorem~\ref{mainthm:general-dichotomy} stated in the
Introduction.
\begin{thm}
  \label{thm:general-dichotomy}
  Let $T_m\colon X_m\to L_1$ ($m\in\bn$) be a uniformly bounded
  sequence of operators. Then the following dichotomy holds:
  \begin{mylist}{(ii)}
  \item
    either the identity operators $\id_{\ell_1^k}$ ($k\in\bn$)
    uniformly factor through the $T_m$,
  \item
    or the $T_m$ have uniform approximate lattice bounds.
  \end{mylist}
\end{thm}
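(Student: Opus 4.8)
My plan is to prove the dichotomy in the form ``if (ii) fails then (i) holds,'' after noting that the two alternatives are incompatible. For the incompatibility, suppose $\id_{\ell_1^k}$ factored through $T_{m_k}$ with a common constant $C_0$ while the $T_m$ also had uniform approximate lattice bounds. Rescaling the factorizations so that the ``$A$-side'' has norm $\le 1$, we obtain $v_1,\dots,v_k\in B_{X_{m_k}}$ with $\bignorm{\sum_i\theta_iT_{m_k}v_i}_{L_1}\ge k/C_0$ for \emph{every} choice of signs $\theta_i\in\{-1,1\}$; averaging over the signs and applying the Cauchy--Schwarz inequality pointwise gives $k/C_0\le\norm{\bigl(\sum_i(T_{m_k}v_i)^2\bigr)^{1/2}}_{L_1}$. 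On the other hand, if $g$ is an approximate lattice bound corresponding to $\vare$ with $\norm{g}_{L_1}\le C(\vare)$, then $\bigl(\sum_i(T_{m_k}v_i)^2\bigr)^{1/2}\le\sqrt{2k}\,g+\sqrt2\sum_i(\abs{T_{m_k}v_i}-g)^+$ pointwise, so $k/C_0\le\sqrt{2k}\,C(\vare)+\sqrt2\,k\vare$, which is impossible once $\vare<1/(\sqrt2\,C_0)$ and $k$ is large. Hence it suffices to show: if the $T_m$ have no uniform approximate lattice bounds, then the $\id_{\ell_1^k}$ uniformly factor through the $T_m$.

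So assume $\vare_0>0$ witnesses the failure of (ii): for every $C>0$ there is an $m$ such that for each $g\in L_1^+$ with $\norm{g}_{L_1}\le C$ there is $x\in B_{X_m}$ with $\norm{(\abs{T_mx}-g)^+}_{L_1}>\vare_0$. Put $K:=\sup_m\norm{T_m}$, fix $n\in\bn$, and aim to factor $\id_{\ell_1^n}$ through a single $T_m$ with a constant depending only on $\vare_0$. First I would run a \emph{running-supremum iteration}: choose $C=C(n,K,\vare_0)$ large (how large is dictated by the last step) and a corresponding $m$; set $g^{(0)}:=0$; and, having produced $x_1,\dots,x_j\in B_{X_m}$ with $g^{(j)}:=\abs{T_mx_1}\vee\dots\vee\abs{T_mx_j}$, as long as $\norm{g^{(j)}}_{L_1}\le C$ use the property to get $x_{j+1}\in B_{X_m}$ with $\norm{(\abs{T_mx_{j+1}}-g^{(j)})^+}_{L_1}>\vare_0$ and set $g^{(j+1)}:=g^{(j)}\vee\abs{T_mx_{j+1}}$. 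Since $a\vee b=a+(b-a)^+$ for $a,b\ge0$, one has $\norm{g^{(j+1)}}_{L_1}=\norm{g^{(j)}}_{L_1}+\norm{(\abs{T_mx_{j+1}}-g^{(j)})^+}_{L_1}>\norm{g^{(j)}}_{L_1}+\vare_0$, while still $\norm{g^{(j+1)}}_{L_1}\le(j+1)K$; hence the iteration survives for $L:=\lfloor C/K\rfloor$ steps and produces $x_1,\dots,x_L\in B_{X_m}$ with $\norm{g^{(L)}}_{L_1}>L\vare_0$.

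Next I would \emph{extract pieces from the argmax partition}. Let $[0,1]=\bigsqcup_{j=1}^LD_j$ be the partition in which $D_j$ is the set of points at which $\abs{T_mx_j}$ equals $\max_{i\le L}\abs{T_mx_i}$, with $j$ the least such index. Then $g^{(L)}=\sum_j\abs{T_mx_j}\bi_{D_j}$, one has $\abs{T_mx_i}\le\abs{T_mx_j}$ on $D_j$ for every $i$, and $\sum_{j=1}^L\int_{D_j}\abs{T_mx_j}=\norm{g^{(L)}}_{L_1}>L\vare_0$. Since each summand is at most $K$, the index set $J:=\{j:\int_{D_j}\abs{T_mx_j}\ge\vare_0/2\}$ has $\abs{J}\ge L\vare_0/(2K)$; so by choosing $C$ large I obtain, for every $n$, a family $(x_j,D_j)_{j\in J}$ with the $D_j$ pairwise disjoint, $\int_{D_j}\abs{T_mx_j}\ge\vare_0/2$, $\abs{T_mx_i}\le\abs{T_mx_j}$ on $D_j$ for all $i$, and $\abs{J}$ as large as I please.

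The remaining step---manufacturing a bounded factorization of $\id_{\ell_1^n}$ out of such a family---is the one I expect to be the main obstacle. If the images $T_mx_j$ were \emph{almost supported} on the corresponding $D_j$, this would be routine: with $A\colon e_i\mapsto x_i$ (so $\norm{A}\le1$) and $B\colon f\mapsto\bigl(\int_{D_i}f\,\sgn(T_mx_i)\bigr)_i$ (so $\norm{B\colon L_1\to\ell_1^n}\le1$, the $D_i$ being disjoint), $BT_mA$ would be a small perturbation of an invertible diagonal operator with entries in $[\vare_0/2,K]$, and inverting would give $\id_{\ell_1^n}=B'T_mA$ with $\norm{B'}\,\norm{A}\lesssim1/\vare_0$, independently of $n$. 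In general, however, the off-diagonal quantities $\int_{D_i}(T_mx_j)\,\sgn(T_mx_i)$ need not be small, and---since many of the images can pile up on a single small $D_i$---they cannot be made small merely by discarding indices. The substantive point is to pass to a subfamily of size $n$ and refine the $D_i$ so that, after renormalisation, the images span a \emph{complemented} copy of the $\ell_1^n$-basis inside $L_1$ (a situation modelled on the summing basis), the norming functionals $\phi_i\in B_{L_\infty}$ implementing the factorization then having bounded total overlap $\norm{\sum_i\abs{\phi_i}}_{L_\infty}\lesssim1/\vare_0$ rather than disjoint supports---the cancellation in the $\phi_i$ being used to annihilate the contributions of the other images. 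Granting such a stabilisation, taking $A\colon e_i\mapsto x_i$ and $B\colon f\mapsto(\phi_i(f))_i$ factors $\id_{\ell_1^n}$ through $T_m$ with constant $\lesssim1/\vare_0$, uniformly in $n$; this is exactly (i).
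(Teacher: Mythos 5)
Your incompatibility argument is correct and is a genuine alternative to the paper's, which deduces a contradiction from Dor's Theorem~B rather than from the square-function/Cauchy--Schwarz estimate you use. Your running-supremum iteration and the subsequent argmax-partition extraction are also correct: they do produce, for every $n$, an $m$ together with functions $f_1,\dots,f_n\in T_m(B_{X_m})$ and pairwise disjoint sets $D_1,\dots,D_n$ satisfying $\norm{f_j\restrict_{D_j}}_{L_1}\geq\vare_0/2$. (The paper reaches the same configuration by a slightly different bookkeeping, bounding how much of the ``new mass'' of earlier functions a given $D_{i_0}$ can carry and then pigeonholing.) This is precisely the hypothesis of Proposition~\ref{prop:almost-disjoint-supp-implies-factorization}.

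However, the step you leave open---``Granting such a stabilisation, \dots''---is exactly the content of that proposition, and it is the hard part of the proof; your attempt does not fill it in. Moreover your diagnosis of the obstruction is mistaken: you assert that the off-diagonal quantities $\int_{D_i}\abs{T_mx_j}$ ``cannot be made small merely by discarding indices,'' but in fact they can, and this is precisely what Rosenthal's lemma (Theorem~\ref{thm:almost-disjoint-supp-fns}, proved here via the Johnson--Schechtman random-subsample argument) accomplishes. The key is that each \emph{row} sum $\sum_i\int_{D_i}\abs{T_mx_j}\leq\norm{T_mx_j}_{L_1}$ is bounded; a random $k$-element subset of your index family $J$ then has, with positive probability, all submatrix row sums at most $\vare_0/4$, so the selected $f_j$ form a $(4/\vare_0)$-equivalent copy of the $\ell_1^k$ unit vector basis inside $L_1$. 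One then improves the equivalence constant to below $\sqrt2$ by James's blocking argument and invokes Dor's Theorem~\ref{thm:dor} to obtain a bounded projection onto the resulting nearly isometric copy of $\ell_1^k$; only this full chain---Rosenthal, then James, then Dor---delivers the functionals $\phi_i$ you posit. Without it, the proof has a genuine gap.
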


\begin{rem}
  We observe that this is a genuine dichotomy. Indeed, assume that
  both alternatives hold. By~(i) there exists $C>0$ such that for all
  $k\in\bn$ there is an $m\in\bn$ such that $T_m \big(B_{X_m}\big)$
  contains a sequence
  $f_1,\dots, f_k$ which is $C$-equivalent to the unit vector basis of
  $\ell_1^k$ for some constant $C$ independent of $k$. By a theorem of
  Dor~\cite[Theorem B]{dor:75} there exist $\delta>0$ (depending only
  on $C$) and disjoint sets $E_1,\dots,E_k$ such that
  $\norm{f_j\restrict_{E_j}}\geq \delta$ for all $j$. By~(ii) there
  exists a uniform approximate lattice bound $(g_m)$ for the $T_m$
  corresponding to $\vare=\delta/2$. Then
  \begin{align*}
    \norm{g_m}_{L_1} &\geq \sum _{j=1}^k \norm{g_m\restrict_{E_j}}_{L_1} \geq
    \sum_{j=1}^k \norm{(\abs{f_j}\meet g_m)\restrict_{E_j}}_{L_1}
    \\
    &\geq \sum_{j=1}^k \big( \norm{f_j\restrict_{E_j}}_{L_1} -
    \norm{(\abs{f_j}-g_m)^+\restrict _{E_j}}_{L_1}\big) \geq k\delta/2\ .
  \end{align*}
  Thus $\sup_m \norm{g_m}_{L_1}=\infty$ --- a contradiction.
\end{rem}

Before embarking on the proof of Theorem~\ref{thm:general-dichotomy},
we make a simple observation, which places uniform lattice bounds in the
context of factorization.
\begin{prop}
  \label{prop:lattice-bounds-factorization}
  Let $T_m\colon X_m\to L_1$ ($m\in\bn$) be a uniformly bounded
  sequence of operators.
  \begin{mylist}{(ii)}
  \item
    If the $T_m$ have uniform lattice bounds then they uniformly
    factor through $L_\infty$. In particular, if $\dim X_m<\infty$ for
    all $m$, then the $T_m$ uniformly factor through
    $\ell_\infty^n$ ($n\in\bn$).
  \item
    Suppose that for each $m\in\bn$ we have $X_m=\ell_1^{N_m}$ for some
    $N_m\in\bn$. If the $T_m$ have uniform approximate lattice
    bounds, then they uniformly approximately factor through
    $\ell_\infty^n$ ($n\in\bn$).
  \end{mylist}
\end{prop}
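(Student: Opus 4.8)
\emph{Part~(i).} The plan is to prove~(i) directly, recognising a uniform lattice bound as a disguised factorisation through $L_\infty$, and then to derive~(ii) from it by replacing $T_m$ with a nearby operator that genuinely has a lattice bound. So let $(g_m)$ be a uniform lattice bound for the $T_m$, so that $\norm{g_m}_{L_1}\le C$ for all $m$ and $\abs{T_mx}\le g_m$ whenever $x\in B_{X_m}$; then automatically $g_m\ge 0$ and $T_mx$ vanishes a.e.\ on $\{g_m=0\}$. I would put $S_mx=(T_mx)/g_m$, read as $0$ on $\{g_m=0\}$, obtaining an operator $S_m\colon X_m\to L_\infty[0,1]$ of norm at most~$1$, and let $M_{g_m}\colon L_\infty[0,1]\to L_1$ be multiplication by $g_m$, of norm at most $\norm{g_m}_{L_1}\le C$. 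Since $T_m=M_{g_m}S_m$, the $T_m$ uniformly factor through $L_\infty$ with constant~$C$. When moreover $\dim X_m<\infty$, the range $S_m(X_m)$ is finite-dimensional, and here I would use the classical fact that $L_\infty[0,1]$ is an $\cL_\infty$-space (indeed, isometric to a $C(K)$-space): $S_m(X_m)$ lies in a finite-dimensional subspace $H_m\subseteq L_\infty[0,1]$ together with an isomorphism $w_m\colon H_m\to\ell_\infty^{n_m}$ with $\norm{w_m}\le 1$ and $\norm{w_m^{-1}}\le 2$. Then $A_m=w_mS_m$ (with $S_m$ regarded as mapping into $H_m$) and $B_m=M_{g_m}w_m^{-1}$ (with $w_m^{-1}$ regarded as mapping into $L_\infty[0,1]$) satisfy $B_mA_m=T_m$ and $\norm{A_m}\cdot\norm{B_m}\le 2C$, which is the required uniform factorisation through $\ell_\infty^n$ ($n\in\bn$). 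The norm bookkeeping here is routine.

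\emph{Part~(ii).} Now suppose $X_m=\ell_1^{N_m}$ and the $T_m$ have uniform approximate lattice bounds. Given $\vare>0$, I would apply the hypothesis with parameter $\vare/2$ to get $C$ and, for each $m$, some $g_m\in L_1^+$ with $\norm{g_m}_{L_1}\le C$ and $\bignorm{\big(\abs{T_mx}-g_m\big)^+}_{L_1}\le\vare/2$ for all $x\in B_{\ell_1^{N_m}}$, and in particular for $x=\pm e_i$. Then I would let $\widetilde{T}_m\colon\ell_1^{N_m}\to L_1$ be the \emph{linear} operator determined on the unit vector basis by the pointwise truncation $\widetilde{T}_me_i=(T_me_i\join(-g_m))\meet g_m$ of the column $T_me_i$ at level $g_m$. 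Since $\abs{\widetilde{T}_me_i}\le g_m$ and $B_{\ell_1^{N_m}}$ is the absolutely convex hull of $\{e_1,\dots,e_{N_m}\}$, one gets $\abs{\widetilde{T}_mx}\le g_m$ for every $x\in B_{\ell_1^{N_m}}$, so $(g_m)$ is a uniform lattice bound for the sequence $(\widetilde{T}_m)$, whose domains are finite-dimensional. The same convexity, together with $\norm{T_me_i-\widetilde{T}_me_i}_{L_1}=\bignorm{\big(\abs{T_me_i}-g_m\big)^+}_{L_1}\le\vare/2$, yields $\norm{T_m-\widetilde{T}_m}\le\vare/2$. Applying part~(i) to $(\widetilde{T}_m)$ then supplies a constant $C'$ depending only on $\vare$, and factorisations $\widetilde{T}_m=B_mA_m$ through $\ell_\infty^{n_m}$ with $\norm{A_m}\cdot\norm{B_m}\le C'$; since $\norm{T_m-B_mA_m}=\norm{T_m-\widetilde{T}_m}\le\vare/2<\vare$, this is exactly the assertion that the $T_m$ uniformly approximately factor through $\ell_\infty^n$ ($n\in\bn$).

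\emph{The main obstacle.} The only genuinely non-formal ingredient is the finite-dimensional refinement in part~(i) --- passing from a factorisation through the infinite-dimensional $L_\infty$ to one through finite $\ell_\infty^n$'s --- which relies on the (classical) $\cL_\infty$-structure of $L_\infty$. Everything else is elementary; in part~(ii) the point that needs a little care is that the pointwise truncation $f\mapsto(f\join(-g_m))\meet g_m$, although not linear, may be swapped for the linear operator $\widetilde{T}_m$ at the cost of exactly the approximate-lattice-bound error $\vare/2$, and that this works only because the domains are $\ell_1^{N_m}$, where control on the unit vectors $e_i$ propagates to the whole unit ball.
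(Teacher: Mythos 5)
Your proposal is correct and follows essentially the same route as the paper: in part~(i) you factor $T_m$ as multiplication by $g_m$ composed with division by $g_m$, then pass to $\ell_\infty^n$ using the $\cL_\infty$-structure of $L_\infty$; in part~(ii) you truncate each column $T_me_i$ at level $g_m$, check $\norm{T_m-\widetilde T_m}\le\vare/2$ on the unit vectors (hence on all of $B_{\ell_1^{N_m}}$ by convexity), and apply part~(i) to the truncated operator. The only cosmetic differences are that the paper simply assumes $g_m>0$ a.e.\ without loss of generality, and leaves the norm bookkeeping of the $\cL_\infty$-step implicit.
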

\begin{proof}
  (i)~Let $(g_m)$ be a bounded sequence in $L_1$ such that
  $\abs{T_mx}\leq g_m$ for all $x\in B_{X_m}$ and for all
  $m\in\bn$. Without loss of generality for each $m\in\bn$ we have
  $g_m>0$ everywhere. We can then define maps $A_m\colon X_m\to
  L_\infty$ by $A_mx=\frac{T_mx}{g_m}$ and $B_m\colon L_\infty\to
  L_1$ by $B_mf=g_m\cdot f$. This gives the required factorization
  $T_m=B_mA_m$ with $\sup \norm{A_m}\cdot
  \norm{B_m}=\sup\norm{g_m}_{L_1}<\infty$. The second assertion
  follows immediately by virtue of the fact that $L_\infty$ is a
  $\cL_\infty$-space.

  \noindent
  (ii)~Let $\vare>0$ and let $(g_m)$ be a corresponding uniform
  approximate lattice bound for the $T_m$. For $m\in\bn$ define a
  linear operator $S_m\colon \ell_1^{N_m}\to L_1$ by setting
  $S_me_i=(T_me_i\meet g_m)\join (-g_m)$ ($i=1,\dots,N_m$), where
  $(e_i)_{i=1}^{N_m}$ denotes the unit vector basis of
  $\ell_1^{N_m}$. Then
\[
  \norm{T_m-S_m}=\max _{1\leq i\leq
  N_m} \norm{(T_m-S_m)(e_i)}_{L_1}\leq \vare\ .
\]
Since $(g_m)$ is a uniform lattice
  bound for the $S_m$, it follows from~(i) that the $S_m$
  uniformly factor through $\ell_\infty^n$ ($n\in\bn$).
\end{proof}

We now begin the proof of Theorem~\ref{thm:general-dichotomy}. We will
need two ingredients. The first of these is a sort of converse to the
aformentioned result of Dor~\cite[Theorem B]{dor:75}. This converse
result for an infinite sequence $(f_i)$, from which the
quantitative statement below follows easily, was proved by
H.~Rosenthal~\cite{rosenthal:70} using a combinatorial argument.
Here we sketch a particularly elegant probabilistic
proof from~\cite{john-schec:82} which has the advantage of giving a
linear bound (with respect to~$k$) on the constant $n(\delta,k)$ in
the statement of the theorem.
\begin{thm}
  \label{thm:almost-disjoint-supp-fns}
  For each $\delta>0$ and $k\in\bn$ there exists $n=n(\delta,k)\in\bn$
  such that if $f_1,\dots,f_n$ are functions in $B_{L_1}$ for which
  there are disjoint sets $E_1,\dots,E_n$ with
  $\norm{f_i\restrict_{E_i}}_{L_1}\geq\delta$ for all $i$, then there
  is a subsequence $(f_{j_i})_{i=1}^k$ such that
  \[
  \Bignorm{\sum_{i=1}^k a_if_{j_i}}_{L_1}\geq\frac{\delta}{2}\qquad
  \text{whenever }\sum_{i=1}^k\abs{a_i}=1\ .
  \]
  In particular, $(f_{j_i})_{i=1}^k$ is $\frac2{\delta}$-equivalent to
  the unit vector basis of $\ell_1^k$.
\end{thm}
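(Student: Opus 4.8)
The plan is to produce the good subsequence $(f_{j_i})_{i=1}^k$ by first reducing, via two elementary inequalities, to a purely combinatorial ``small overlap'' selection problem, and then to solve that problem by random selection followed by deletion of the bad indices (the alteration method); this is what yields a bound $n(\delta,k)$ that is linear in $k$.

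\emph{Step 1 (reduction).} Fix a subset $S\subseteq\{1,\dots,n\}$ and scalars $(a_i)_{i\in S}$. Since the $E_j$ are pairwise disjoint, summing the $L_1$-masses over them and then using the triangle inequality gives
\[
\Bignorm{\sum_{i\in S}a_if_i}_{L_1}\ \geq\ \sum_{j\in S}\Bignorm{\sum_{i\in S}a_if_i}_{L_1(E_j)}\ \geq\ \sum_{j\in S}\bigg(\abs{a_j}\,\norm{f_j\restrict_{E_j}}_{L_1}-\sum_{i\in S\setminus\{j\}}\abs{a_i}\,\norm{f_i\restrict_{E_j}}_{L_1}\bigg).
\]
Using $\norm{f_j\restrict_{E_j}}_{L_1}\geq\delta$ and interchanging the order of summation in the double sum, the right-hand side is at least $\delta\sum_{i\in S}\abs{a_i}-\sum_{i\in S}\abs{a_i}\big(\sum_{j\in S\setminus\{i\}}\norm{f_i\restrict_{E_j}}_{L_1}\big)$. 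It therefore suffices to find a set $S$ with $\abs{S}\geq k$ such that $\sum_{j\in S\setminus\{i\}}\norm{f_i\restrict_{E_j}}_{L_1}\leq\delta/2$ for every $i\in S$; since this condition only improves on subsets of $S$, any $k$-element subset of such an $S$, relabelled $(f_{j_i})_{i=1}^k$, then satisfies $\norm{\sum_i a_if_{j_i}}_{L_1}\geq\frac{\delta}{2}\sum_i\abs{a_i}=\frac{\delta}{2}$ when $\sum_i\abs{a_i}=1$. The upper bound $\norm{\sum_i a_if_{j_i}}_{L_1}\leq\sum_i\abs{a_i}$ is automatic, giving the asserted $\frac{2}{\delta}$-equivalence to the $\ell_1^k$-basis.

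\emph{Step 2 (random selection plus deletion).} The one fact we use is that the $E_j$ are disjoint and each $f_i\in B_{L_1}$, so $\sum_{j=1}^n\norm{f_i\restrict_{E_j}}_{L_1}\leq1$ for every $i$. Build a random set $S_0\subseteq\{1,\dots,n\}$ by including each index independently with probability $p=\delta/4$; note $p<1$ since $\delta\leq\norm{f_i\restrict_{E_i}}_{L_1}\leq1$. Call $i\in S_0$ \emph{bad} if $\sum_{j\in S_0\setminus\{i\}}\norm{f_i\restrict_{E_j}}_{L_1}>\delta/2$. Conditioning on $i\in S_0$ and applying Markov's inequality to $\sum_{j\neq i}\bi_{\{j\in S_0\}}\norm{f_i\restrict_{E_j}}_{L_1}$, whose conditional expectation is at most $p$, shows $\bp(i\text{ is bad})\leq p\cdot\frac{2p}{\delta}=\frac{2p^2}{\delta}$. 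Hence the set $S$ obtained by deleting all bad indices from $S_0$ has $\be\abs{S}\geq np-\frac{2np^2}{\delta}=\frac{n\delta}{8}$, and choosing $n=n(\delta,k)=\lceil 8k/\delta\rceil$ makes $\be\abs{S}\geq k$. So some realization has $\abs{S}\geq k$ with every index of $S$ satisfying the overlap bound (because $S\subseteq S_0$), and Step~1 finishes the proof.

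\emph{Expected main difficulty.} There is essentially no analytic content beyond the two inequalities of Step~1, so the only genuine idea is the reduction to the small-overlap problem together with the decision to attack it probabilistically with deletion rather than by a deterministic greedy or pigeonhole extraction; the latter would force $n(\delta,k)$ to grow much faster than linearly in $k$. A minor point to keep in mind is the harmless normalization $\delta\leq1$, which makes the choice $p=\delta/4$ legitimate.
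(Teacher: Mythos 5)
Your proof is correct and takes essentially the same route as the paper: reduce to finding a $k$-element index set $F$ on which the row sums of the overlap matrix $\big(\norm{f_i\restrict_{E_j}}_{L_1}\big)_{i\neq j}$ are at most $\delta/2$, and then produce such an $F$ by random selection. The paper draws a uniformly random $2k$-subset, bounds its expected total overlap, and prunes half of it via Markov's inequality, while you use independent Bernoulli sampling at rate $\delta/4$ with deletion of bad indices; these are minor variants of the same alteration argument and both give $n(\delta,k)$ linear in $k$.
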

\begin{proof}
  Fix $\delta\in (0,1]$ and $k\in\bn$. Let
  $n=\bigintp{\frac{10}{\delta}}\cdot k$, and let $A=(\alpha_{i,j})$ be the
  $n\times n$ matrix with
  $\alpha_{i,j}=\norm{f_i\restrict_{E_j}}_{L_1}$ when $i\neq j$ and zeros
  on the diagonal. Note that the row sums of $A$ satisfy $\sum_{j=1}^n
  \alpha_{i,j}\leq \norm{f_i}_{L_1}\leq 1$. We will show the existence of a
  $k\times k$ submatrix $(\alpha_{i,j})_{i,j\in F}$ whose row sums are at
  most $\frac{\delta}2$. An easy direct computation then shows that the
  subsequence $(f_i)_{i\in F}$ has the required property.

  Pick a subset $E$ of $\{1,\dots,n\}$ of size $2k$ uniformly at
  random. Then
\[
\be \sum_{i,j\in E} \alpha_{i,j} = \be \sum_{i,j=1}^n \alpha_{i,j} \bi_{\{i,j\in
  E\}} = \sum_{i,j=1}^n
  \alpha_{i,j} \binom{n-2}{2k-2}\binom{n}{2k}^{-1} \leq \frac{(2k)^2}{n-1}\
  .
\]
It follows that for some subset $E$ the row sums of the submatrix
$(\alpha_{i,j})_{i,j\in E}$ are at most $\frac{2k}{n-1}$ on average. Hence,
by Markov's inequality, at least half of the rows sum to at most twice
this average. \Ie for some $F\subset E$ with $\abs{F}=k$, the row sums
of $(\alpha_{i,j})_{i,j\in F}$ are at most~$\frac{\delta}{2}$.
\end{proof}

The second ingredient is a theorem of Dor which shows, in particular,
that a subspace of $L_1$ whose
Banach--Mazur distance to $\ell_1^k$ is not too large is
well complemented.

\begin{thm}[({Dor~\cite[Theorem~A]{dor:75}})]
  \label{thm:dor}
  Let $\mu$ and $\nu$ be measures and $T\colon L_1(\nu)\to L_1(\mu)$
  an isomorphic embedding with
  $\norm{T}\cdot\norm{T^{-1}}=\lambda<\sqrt{2}$. Then there is a
  projection $P$ of $L_1(\mu)$ onto the range of $T$ with
  \[
  \norm{P}\leq \big( 2\lambda^{-2}-1\big)^{-1}\ .
  \]
\end{thm}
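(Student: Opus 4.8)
The plan is as follows. After the harmless normalisation $\norm{T^{-1}}=1$ and $\norm{T}=\lambda$, so that $\norm{g}_{L_1(\nu)}\leq\norm{Tg}_{L_1(\mu)}\leq\lambda\norm{g}_{L_1(\nu)}$ for all $g$, I would first reduce to the purely atomic finite-dimensional case. Running over the net of finite sub-$\sigma$-algebras $\Sigma_\alpha$ of the $\nu$-space, the subspaces $L_1(\nu,\Sigma_\alpha)\cong\ell_1^{n_\alpha}$ have dense union in $L_1(\nu)$, and $T$ restricted to each of them still has distortion at most $\lambda$; so if each of the subspaces $T\bigl(L_1(\nu,\Sigma_\alpha)\bigr)$ carries a projection of $L_1(\mu)$ of norm at most $(2\lambda^{-2}-1)^{-1}$, uniformly in $\alpha$, then a routine weak-$*$-compactness limiting argument --- using that $L_1(\mu)$ is norm-one complemented in its (abstract-$L_1$) bidual --- produces a projection of $L_1(\mu)$ onto $\ran(T)$ with the same bound. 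Thus it is enough to treat $T\colon\ell_1^n\to L_1(\mu)$ with $\sum_i\abs{a_i}\leq\norm{\sum_ia_if_i}_{L_1}\leq\lambda\sum_i\abs{a_i}$, where $f_i=Te_i$.

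In that situation I would look for the projection in the biorthogonal form $Pf=\sum_i\phi_i(f)\,f_i$, with $\phi_i\in L_\infty(\mu)$ chosen so that $\int\phi_if_j=\delta_{ij}$; then the upper $\ell_1^n$-estimate gives $\norm{Pf}_{L_1}\leq\lambda\sum_i\abs{\phi_i(f)}\leq\lambda\,\bignorm{\,\sum_i\abs{\phi_i}\,}_{L_\infty}\norm{f}_{L_1}$, so the whole task becomes the choice of the $\phi_i$. The natural candidates are (suitably corrected) signed indicators $\phi_i$ concentrated near disjoint measurable sets $A_i$ on which $\abs{f_i}$ is large, with $\phi_i$ essentially $\bigl(\int_{A_i}\abs{f_i}\bigr)^{-1}\sgn(f_i)\bi_{A_i}$: disjointness of the $A_i$ then yields $\bignorm{\sum_i\abs{\phi_i}}_{L_\infty}\approx\max_i\bigl(\int_{A_i}\abs{f_i}\bigr)^{-1}$, while the correction required for exact biorthogonality is controlled as soon as each $f_j$ is sufficiently small on $\bigcup_{i\neq j}A_i$. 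So everything comes down to producing disjoint sets $A_1,\dots,A_n$ with $\int_{A_i}\abs{f_i}$ bounded below and $\int_{\bigcup_{i\neq j}A_i}\abs{f_j}$ bounded above, the bounds depending only on $\lambda$.

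That last point is the heart of the matter and the step I expect to be the main obstacle. The mechanism is the $L_1$ identity $\norm{h_1+h_2}_{L_1}+\norm{h_1-h_2}_{L_1}=2\norm{\,\abs{h_1}\vee\abs{h_2}\,}_{L_1}$, which together with the two-sided norm estimate controls the lattice overlaps $\norm{\,\abs{f_i}\wedge\abs{f_j}\,}_{L_1}$ (by $2(\lambda-1)$ in the normalisation above), combined with an extremal --- flow- or linear-programming-type --- argument that optimises the partition $(A_i)$ of $\bigl\{\sum_i\abs{f_i}>0\bigr\}$ simultaneously, rather than, say, taking each $A_i$ to be where $\abs{f_i}$ is the pointwise largest (which is already too crude for $\lambda$ only slightly above $1$). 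Carrying this through so that the final constant is exactly $(2\lambda^{-2}-1)^{-1}$, which is finite precisely for $\lambda<\sqrt2$, is the technical core, and the value $\sqrt2$ is the natural ceiling of any disjointness-based method (isometric copies of $\ell_2^n$ inside $L_1$ obstruct it beyond that point). The remaining ingredients --- the normalisation, the biorthogonal reformulation, and the limiting reduction to $\ell_1^n$ --- are routine.
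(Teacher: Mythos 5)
This statement is a cited result (Dor~\cite[Theorem~A]{dor:75}); the paper does not prove it, so there is no in-paper proof to compare your attempt against. Your sketch does capture the right overall shape --- normalise, reduce to $T\colon\ell_1^n\to L_1(\mu)$, produce almost-disjoint supports for the $f_i=Te_i$, and use them to build the projection --- and you are candid that the quantitative heart is missing.

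The gap is genuine, and it is not just a matter of ``carrying out the technical core'': the biorthogonal-correction mechanism you propose, implemented in the obvious way, provably fails to recover Dor's constants. Concretely, normalise $\norm{T^{-1}}=1$, $\norm{T}=\lambda$; suppose you have disjoint $A_i$ with $\int_{A_i}\abs{f_i}\geq\delta$. Take $\psi_i=\sgn(f_i)\bi_{A_i}$, let $S\colon L_1(\mu)\to\ell_1^n$ be $Sg=\big(\langle g,\psi_i\rangle\big)_i$ (so $\norm{S}\leq1$ by disjointness), and correct by $P=T(ST)^{-1}S$. The matrix $ST$ is column diagonally dominant only when $\delta$ exceeds roughly $\lambda/2$; optimising this against the best achievable lower bound on $\delta$ (which, after rescaling, is $2\lambda^{-2}-1$ by Dor's disjointness lemma) yields a projection bound of order $\lambda^2/(4-3\lambda^2)$, finite only for $\lambda<2/\sqrt3$, not $\lambda<\sqrt2$, and strictly larger than $\lambda^2/(2-\lambda^2)=(2\lambda^{-2}-1)^{-1}$ on the whole range. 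So the Neumann-series correction is too lossy; Dor's argument builds the projection from the disjoint sets in a sharper way (and the selection of the sets itself is a genuine minimax/convexity argument, not a greedy pointwise choice, as you rightly suspect). As written, your plan does not reach either the threshold $\sqrt2$ or the stated constant, and you would need to replace the biorthogonal-inversion step with Dor's actual construction to close the gap. The limiting reduction to $\ell_1^n$ is fine modulo the standard care that one must project from $\ran T^{**}$ back to $\ran T$ (using that $\ran T$ is itself an $L_1$-space, hence $1$-complemented in its bidual).
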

In the proof of Theorem~\ref{thm:general-dichotomy} we shall use an
argument that will also be needed in
Section~\ref{sec:independent-case}, so we state and prove it
separately.
\begin{prop}
  \label{prop:almost-disjoint-supp-implies-factorization}
  Let $T_m\colon X_m\to L_1$ ($m\in\bn$) be operators with
  $\norm{T_m}\leq 1$ for all $m\in\bn$. Assume that there exists
  $\delta>0$ such that for all $n\in\bn$ there exist $m\in\bn$,
  functions $f_1,\dots,f_n\in T_m(B_{X_m})$ and pairwise disjoint sets
  $E_1,\dots,E_n$ such that $\norm{f_i\restrict_{E_i}}_{L_1}\geq\delta$ for
  all $i$. Then the identity operators $\id_{\ell_1^k}$ uniformly
  factor through the $T_m$.
\end{prop}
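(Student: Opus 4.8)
The plan is to combine the two Dor-type ingredients just recalled. First I would fix $\delta>0$ as in the hypothesis, and apply Theorem~\ref{thm:almost-disjoint-supp-fns}: given $k\in\bn$, set $n=n(\delta,k)$ and use the assumption to produce $m\in\bn$, functions $f_1,\dots,f_n\in T_m(B_{X_m})$ and pairwise disjoint sets $E_1,\dots,E_n$ with $\norm{f_i\restrict_{E_i}}_{L_1}\geq\delta$. Theorem~\ref{thm:almost-disjoint-supp-fns} then hands back a subsequence, which after relabelling I call $f_1,\dots,f_k$, that is $\tfrac2\delta$-equivalent to the unit vector basis of $\ell_1^k$; in particular the operator $U_k\colon\ell_1^k\to L_1$ sending the $i$th basis vector $e_i$ to $f_i$ is an isomorphism onto its range with $\norm{U_k}\leq 1$ and $\norm{U_k^{-1}}\leq\tfrac2\delta$.

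The issue is that $\tfrac2\delta$ need not be less than $\sqrt2$, so Theorem~\ref{thm:dor} does not apply directly to $U_k$; this is the main obstacle. The standard fix is to blow up $k$: instead of aiming for $\id_{\ell_1^k}$ directly, I would first obtain, for a much larger integer $K=K(\delta,k)$, a $\tfrac2\delta$-equivalent copy $f_1,\dots,f_K$ of the $\ell_1^K$-basis inside $T_m(B_{X_m})$, and then pass to a sub-block-basis. More precisely, I would group the $K$ functions into $k$ consecutive blocks and form normalized averages $h_j=\big(\sum_{i\in B_j}\pm f_i\big)/\norm{\sum_{i\in B_j}\pm f_i}_{L_1}$ over the $j$th block $B_j$; using the near-disjointness of supports coming from Dor's Theorem~B (the converse direction already invoked in the Remark after Theorem~\ref{thm:general-dichotomy}) together with the triangle inequality, one checks that for $|B_j|$ large the block basis $(h_j)_{j=1}^k$ is $\lambda$-equivalent to the $\ell_1^k$-basis for a constant $\lambda$ as close to $1$ as we like — in particular $\lambda<\sqrt2$. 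Since each $h_j$ is still a fixed-norm combination of elements of $T_m(B_{X_m})$, the map $e_j\mapsto h_j$ factors as $V_k\circ S_k$ where $S_k\colon\ell_1^k\to X_m$ is the obvious (uniformly bounded) operator sending $e_j$ to the corresponding combination of the preimages, and $V_k\colon L_1\to L_1$ need not even be present — we simply have $h_j=T_m S_k e_j$.

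Now the subspace $Y_k=\spn\{h_1,\dots,h_k\}\subset L_1$ is $\lambda$-isomorphic to $\ell_1^k$ with $\lambda<\sqrt2$, so Theorem~\ref{thm:dor} produces a projection $P_k\colon L_1\to Y_k$ with $\norm{P_k}\leq(2\lambda^{-2}-1)^{-1}$, a bound independent of $k$. Letting $W_k\colon Y_k\to\ell_1^k$ be the isomorphism $h_j\mapsto e_j$ (so $\norm{W_k}\leq\lambda$) and $A_k=S_k$, $B_k=W_k P_k$, I get $\id_{\ell_1^k}=W_kP_k\,T_m\,S_k=B_k T_m A_k$ with $\norm{A_k}\cdot\norm{B_k}$ bounded by a constant depending only on $\delta$ (through $\lambda$ and the uniform bound on $\norm{T_m}$), not on $k$. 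Choosing for each $k$ the appropriate $m=m_k$ this is exactly the statement that the $\id_{\ell_1^k}$ uniformly factor through the $T_m$. The only routine points to verify are the block-averaging estimate (that large blocks make $\lambda\to1$), which is a direct computation with the disjoint sets $E_i$, and the bookkeeping that all constants are $k$-independent.
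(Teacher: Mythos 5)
Your overall scaffold matches the paper's proof: use Theorem~\ref{thm:almost-disjoint-supp-fns} to get a $\tfrac2\delta$-equivalent $\ell_1^K$-system inside $T_m(B_{X_m})$, improve the equivalence constant below $\sqrt2$ by passing to a block basis, and then invoke Theorem~\ref{thm:dor} to obtain a uniformly bounded projection. The genuine gap is in the middle step. You propose to take \emph{equal-length blocks} and \emph{signed averages} $h_j=\bigl(\sum_{i\in B_j}\pm f_i\bigr)/\bignorm{\sum_{i\in B_j}\pm f_i}$ and claim, by ``a direct computation with the disjoint sets $E_i$,'' that the $(h_j)$ become $\lambda$-equivalent to the $\ell_1^k$-basis with $\lambda$ as close to $1$ as desired. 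This is not justified, and the disjoint sets $E_i$ do not give it to you. Dor's Theorem~B only controls a $\delta$-fraction of each $f_i$'s mass, concentrated on $E_i$; the remaining mass of up to $1-\delta$ lives on $(\bigcup E_i)^{\mathrm c}$ and is completely unconstrained. Restricting to $\bigcup E_i$ and using near-disjointness there gives the lower bound $\bignorm{\sum_j b_j h_j}\geq\tfrac\delta2\sum_j\abs{b_j}$ --- but that is exactly the original constant $\tfrac2\delta$ again, with no improvement. To push $\lambda$ below $\sqrt2$ one must make the ``junk'' parts $f_i-f_i\restrict_{E_i}$ cancel after averaging, and whether fixed $\pm1$ coefficients over fixed equal blocks achieve this is a nontrivial vector-balancing question (or a Kadec--Pe{\l}czy\'nski-type dichotomy), not a triangle-inequality computation.

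The paper avoids exactly this difficulty by invoking James's distortion argument (as in \cite[Proposition~2]{os:03}) rather than equal-block averages. That argument is an inductive pigeonhole over an $r$-level blocking: at each level one either finds a $k$-term block basis that is already $\lambda$-equivalent, or one records a ``loss'' of at least a factor $\lambda$ in the lower $\ell_1$-estimate; after $r$ levels the accumulated loss would exceed the known constant $\tfrac2\delta$, so some level must succeed. Crucially, the resulting block vectors $z_j=\sum_{p_{j-1}<i\leq p_j}a_iy_i$ have general real coefficients with $\sum\abs{a_i}=1$, chosen according to the observed losses --- not prescribed $\pm1$ coefficients on prescribed blocks. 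These $z_j$ still lie in $B_{X_m}$, so $A_m\colon e_j\mapsto z_j$ has norm at most $1$, which is what makes the bookkeeping for Dor's Theorem~A go through cleanly. To repair your proof, replace the block-averaging heuristic with James's argument (citing \cite[Proposition~2]{os:03}), or else supply the missing cancellation lemma for the junk parts --- but note that the latter is genuinely delicate and is precisely what James's argument is designed to sidestep.
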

\begin{proof}
  By Theorem~\ref{thm:almost-disjoint-supp-fns} we can deduce the
  following from the assumption:
  \begin{multline}
    \label{eq:fixing-ell_1^k's}
    \V k\in\bn\quad \E m\in\bn\quad \E y_1,\dots,y_k\in
  B_{X_m}\quad \text{such that}\\[1ex]
    \Bignorm{\sum_{i=1}^k
  a_iT_my_i}_{L_1}\geq\frac{\delta}2\qquad\text{whenever
  }\sum_{i=1}^k\abs{a_i}=1\ .
  \end{multline}
  Thus, in particular, $T_m\big(B_{X_m}\big)$ contains a sequence
  $\frac2{\delta}$-equivalent to the unit vector basis of
  $\ell_1^k$. We next use a well known argument of
  James (see \eg \cite[Proposition~2]{os:03})
 to improve the equivalence constant $\frac2{\delta}$. Fix
  $1<\lambda<\sqrt{2}$. Choose $r\in\bn$ such that
  $\big(\frac2{\delta}\big)^{1/r}<\lambda$, and then set
  $K=k^{r}$. By~\eqref{eq:fixing-ell_1^k's} there exist $m\in\bn$
  and $y_1,\dots,y_K\in B_{X_m}$ such that 
  \begin{equation}
    \label{eq:fixing-ell_1^K's}
    \Bignorm{\sum_{i=1}^K a_iT_my_i}_{L_1}\geq\frac{\delta}2\qquad
    \text{whenever }\sum_{i=1}^K\abs{a_i}=1\ .
  \end{equation}
  Now James's argument shows that there is a block basis
  $z_j=\sum_{i=p_{j-1}+1}^{p_j} a_iy_i$, where $0=p_0<p_1<\dots <p_k=K$
  and $\sum_{i=p_{j-1}+1}^{p_j}\abs{a_i}=1$ for all~$j$, such that
  $\big( T_mz_j\big)_{j=1}^k$ is
  $\big(\frac2{\delta}\big)^{1/r}$-equivalent to the unit vector
  basis of $\ell_1^k$. Thus there exist constants $0<\alpha\leq\beta$
  with $\frac{\beta}{\alpha}<\lambda$ such that
  \begin{equation}
    \label{eq:fixing-good-ell_1^k's}
    \alpha\leq \Bignorm{\sum_{j=1}^k b_jT_mz_j}_{L_1}\leq\beta\qquad
    \text{whenever }\sum_{j=1}^k\abs{b_j}=1\ .
  \end{equation}
  Note that by~\eqref{eq:fixing-ell_1^K's} we have
  $\beta\geq\norm{T_mz_j}_{L_1}\geq\frac{\delta}2$. 
  Now define $A_m\colon\ell_1^k \to X_m$ by $e_j\mapsto z_j$. We
  then have $\bignorm{T_mA_m}\cdot\bignorm{(T_mA_m)^{-1}}<\lambda$, so
  we can apply Theorem~\ref{thm:dor}: there is a projection $P$ of
  $L_1$ onto the
  range of $T_mA_m$ with $\norm{P}\leq \big(
  2\lambda^{-2}-1\big)^{-1}$. Let $B_m\colon L_1\to\ell_1^k$ be
  the composition of $P$ with the map $\spn\{
  T_mz_j:\,j=1,\dots,k\}\to\ell_1^k$ defined by $T_mz_j\mapsto
  e_j$. Using~\eqref{eq:fixing-good-ell_1^k's} and the above
  estimates involving $\alpha$ and $\beta$, we obtain
  \[
  \norm{A_m}\leq 1\ ,\quad\norm{B_m}\leq \norm{P}\cdot
  \frac1{\alpha}\leq \norm{P}\cdot\lambda\cdot \frac2{\delta}\leq
  \frac{2\lambda}{\delta}\cdot \big( 2\lambda^{-2}-1\big)^{-1}\ ,
  \]
  and $\id_{\ell_1^k}=B_mT_mA_m$. Thus the $T_m$ uniformly factor the
  identity operators $\id_{\ell_1^k}$ ($k\in\bn$), as required.
\end{proof}

\begin{proof}[Proof of Theorem~\ref{thm:general-dichotomy}]
  Without loss of generality we have $\norm{T_m}\leq 1$ for all
  $m$. We assume that~(ii) fails: there exists an $\vare>0$ such that
  for all $C>0$ there exists $m\in\bn$ such that
  \begin{equation}
    \label{eq:no-unif-approx-lattice-bound}
    \V g\in L_1^+\text{ with }\norm{g}_{L_1}\leq C\ \E x\in B_{X_m}
    \text{ such that }\bignorm{\big(\abs{T_mx}-g\big)^+}_{L_1}>
    \vare\ .
  \end{equation}
  From this we deduce that the assumption of
  Proposition~\ref{prop:almost-disjoint-supp-implies-factorization} is
  satisfied with $\delta=\vare/2$.

  Fix $n\in\bn$ and set $N=\bigintp{\frac{4n^2}{\vare}}$. Putting
  $C=N-1$, we find
  $m\in\bn$ such that~\eqref{eq:no-unif-approx-lattice-bound}
  holds. From now on we let $T=T_m$. Successive applications
  of~\eqref{eq:no-unif-approx-lattice-bound} yield $x_1,\dots,x_N\in
  B_{X_m}$ such that
  \[
  \Bignorm{\Big( \abs{Tx_i}-\bigjoin_{1\leq j<i}
    \abs{Tx_j}\Big)^+}_{L_1} > \vare\qquad\text{for }i=1,\dots,N\ .
  \]
  (Note that $\bignorm{\bigjoin_{1\leq j<i}\abs{Tx_j}}_{L_1}\leq
  N-1=C$ for all~$i\leq N$.) For each $i=1,\dots,N$ set
  \begin{eqnarray*}
    D_i & = &\Big\{ \omega\in[0,1]:\,\abs{Tx_i}(\omega)>\bigjoin_{1\leq
      j<i}\abs{Tx_j}(\omega)\Big\}\ ,\text{ and}\\
    \Dt_i &=& \Big\{
    (\omega,t)\in[0,1]\times\br:\,\omega\in D_i,\ \abs{Tx_i}(\omega)>t>\bigjoin_{1\leq
      j<i}\abs{Tx_j}(\omega) \Big\}\ .
  \end{eqnarray*}
  (Thus $\Dt_i$ is the region between the graphs of $\abs{Tx_i}$ and
  $\bigjoin_{1\leq j<i}\abs{Tx_j}$ where the former is greater.) For
  each $1<i_0\leq N$, the regions
  $(D_{i_0}\times\br)\cap\Dt_i\,,\ i=1,\dots,i_0-1$, are pairwise disjoint
  and lie beneath the graph of $\abs{Tx_{i_0}}$. It follows that
  \[
  \sum_{i=1}^{i_0-1} \Bignorm{\Big( \abs{Tx_i}-\bigjoin_{1\leq j<i}
    \abs{Tx_j}\Big)^+\cdot \bi_{D_{i_0}}}_{L_1} \leq
  \norm{Tx_{i_0}}_{L_1}\leq 1\ ,
  \]
  and hence
  \[
  \Bigabs{\Big\{ i<i_0:\, \Bignorm{\Big( \abs{Tx_i}-\bigjoin_{1\leq
        j<i} \abs{Tx_j}\Big)^+\cdot
      \bi_{D_{i_0}}}_{L_1}\geq\frac{\vare}{2n} \Big\}} \leq
  \frac{2n}{\vare}\ .
  \]
  By the choice of $N$, we can therefore find $N=i_1>i_2>\dots
  >i_n\geq 1$ such that
  \[
  \Bignorm{\Big( \abs{Tx_{i_s}}-\bigjoin_{1\leq
      j<i_s}\abs{Tx_j}\Big)^+\cdot \bi_{D_{i_r}}}_{L_1}
  <\frac{\vare}{2n} \qquad\text{for }1\leq r<s\leq n\ .
  \]
  Now set $f_s=Tx_{i_s}$ and $E_s=D_{i_s}\setminus \bigcup_{r<s}
  D_{i_r}$ for $s=1,\dots,n$. Then $f_1,\dots,f_n\in T(B_{X_m})$, the
  sets $E_1,\dots,E_n$ are pairwise disjoint, and $\norm{f_i\restrict
  _{E_i}}_{L_1}\geq\frac{\vare}2$ for all $i=1,\dots,n$. This
    completes the proof of the theorem.
\end{proof}

\section{The existence of a unique maximal ideal}
\label{sec:unique-max-ideal}

Let $\cJ$ be an operator ideal. We say $\cJ$ is \emph{injective
}if, given any operator $T\colon E\to F$ between Banach spaces and an
(isomorphic) embedding $J\colon F\to G$, we have $JT\in\cJ(E,G)$ implies
$T\in\cJ(E,F)$. The \emph{injective hull }of $\cJ$ is defined to be
\[
  \cJ\inj(E,F) = \big\{ T\in\cB(E,F):\,\E \text{ embedding
  }J\colon F\to G\text{ such that }JT\in\cJ(E,G) \big\}\ .
\]
It is easy to see that $\cJ\inj$ is an injective
operator ideal and it is the smallest injective ideal containing
$\cJ$.

The dual concept is that of a surjective ideal. We say $\cJ$ is \emph{
  surjective }if, given any operator $T\colon E\to F$ and a quotient
map (\ie an onto bounded linear map) $Q\colon D\to E$, we have $TQ\in\cJ(D,F)$ implies
$T\in\cJ(E,F)$. The \emph{ surjective hull }of $\cJ$ is
\[
  \cJ\sur(E,F) = \big\{ T\in\cB(E,F):\,\E \text{ quotient
    map }Q\colon D\to E\text{ such that }TQ\in\cJ(D,F) \big\}\ .
\]
One can again verify that $\cJ\sur$ is a surjective
operator ideal and it is the smallest such ideal containing $\cJ$.

In this section we investigate what happens if we apply these two ways
of obtaining a new ideal from a given one in the algebra
$\cB(X)$. Recall that throughout
$X=\thespace$. Since $\cK$ is an
injective and surjective operator ideal, we only need to consider
$\cGb_{\co}(X)$. Taking the injective hull, we obtain nothing new.
\begin{thm}
  $\cGb_{\co}\inj(X)=\cB(X)$.
\end{thm}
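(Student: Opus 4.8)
The plan is to read the statement directly off the definition of the injective hull. Recall from the discussion above that $\cGb_{\co}\inj$ is the smallest injective operator ideal containing $\cGb_{\co}$, and that $T\in\cGb_{\co}\inj(E,F)$ means precisely that some (isomorphic) embedding $J\colon F\to G$ satisfies $JT\in\cGb_{\co}(E,G)$. Here $E=F=X$, so what I would do is fix once and for all a single embedding $J\colon X\to G$ of $X$ into a conveniently chosen Banach space $G$, and then observe that $JT\in\cGb_{\co}(X,G)$ automatically for \emph{every} $T\in\cB(X)$. The point of the construction is the choice $G=\co$: any bounded operator into $\co$ factors through $\co$ in the trivial way, $JT=\id_{\co}\circ(JT)$, so $JT\in\cG_{\co}(X,\co)\subseteq\cGb_{\co}(X,\co)$ with nothing to verify. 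In this way the whole assertion is reduced to the (standard) fact that $X$ embeds isomorphically into $\co$.

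The one genuine step is therefore to exhibit such an embedding, which I would do by hand. Each $\ell_1^n$ embeds isometrically into the finite-dimensional space $\ell_\infty^{2^n}$: the map $x\mapsto\big(\ip{x}{\sigma}\big)_{\sigma\in\{-1,1\}^n}$ does the job, since $\max_{\sigma\in\{-1,1\}^n}\bigabs{\sum_{i}\sigma_i x_i}=\sum_i\abs{x_i}=\norm{x}_{\ell_1^n}$. Applying these maps coordinatewise and passing to $\co$-sums yields an isometric embedding of $X=\thespace$ into $\big(\bigoplus_{n=1}^\infty\ell_\infty^{2^n}\big)_{\co}$, and this last space is isometrically isomorphic to $\co$ --- a $\co$-sum of finite-dimensional $\ell_\infty$-spaces is just $\co$ over the disjoint union of the (finite) index sets, via concatenation of coordinate blocks. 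Composing gives the required embedding $J\colon X\to\co$.

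Assembling the pieces: for arbitrary $T\in\cB(X)$ we have $JT\in\cGb_{\co}(X,\co)$ by the first paragraph, hence $T\in\cGb_{\co}\inj(X)$ by the definition of the injective hull; as $T$ was arbitrary, $\cGb_{\co}\inj(X)=\cB(X)$. I do not anticipate any real obstacle here --- the statement is soft and carries no analytic content. The only point requiring a line of checking is the isometric embedding of $X$ into $\co$, and even there the only thing to be slightly careful about is that the embedding respects the $\co$-sum structure (it does, being isometric on each finite-dimensional summand, so that the decay $\norm{x_n}\to 0$ is preserved).
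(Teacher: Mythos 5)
Your proof is correct and follows the same route as the paper: embed $X$ into $\co$ (the paper simply notes this follows from $X$ being a $\co$-sum of finite-dimensional spaces, while you spell out the standard isometric embeddings $\ell_1^n\hookrightarrow\ell_\infty^{2^n}$), and then observe that $JT$ trivially lies in $\cG_{\co}(X,\co)$. The only cosmetic difference is that the paper applies this to $T=\id_X$ and lets the ideal property do the rest, whereas you argue directly for arbitrary $T$.
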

\begin{proof}
  Since $X$ is the $\co$-sum of finite-dimensional spaces, we have an
  embedding $J\colon X\to\co$ and $JI_X\in\cG_{\co}(X,\co)$.
\end{proof}
 The surjective hull, however, does give new information about the
 ideal structure of $\cB(X)$. This is the main result of this section.
\begin{thm}
  \label{thm:unique-max-ideal}
  $\cGb_{\co}\sur(X)$ is the unique maximal ideal of
  $\cB(X)$.
\end{thm}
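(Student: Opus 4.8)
The plan is to show that every closed ideal of $\cB(X)$ other than $\cB(X)$ itself is contained in $\cGb_{\co}\sur(X)$; since we already know from Proposition~\ref{prop:ideal-structure-easy} that $\cGb_{\co}(X)$ (hence also $\cGb_{\co}\sur(X)$) is a proper closed ideal, this will establish that it is the unique maximal ideal. By Proposition~\ref{prop:ideal-structure-easy}, the only thing left to rule out is the existence of a closed ideal strictly containing $\cGb_{\co}(X)$ but not contained in $\cGb_{\co}\sur(X)$. So it suffices to prove the following: if $T\in\cB(X)$ and $T\notin \cGb_{\co}\sur(X)$, then the closed ideal generated by $T$ is all of $\cB(X)$, \ie $\id_X$ factors through $T$ (up to the ideal operations).

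First I would reduce to the finite-dimensional matrix picture developed in Section~\ref{sec:prelim}. Given such a $T$, by Lemma~\ref{lem:loc-finite-perturbation} we may perturb $T$ by a compact (hence in $\cGb_{\co}\sur(X)$) operator to assume $T$ is locally finite, and then form its rows $T^{(m)}\colon X_m\to\ell_1^m$ as before Proposition~\ref{prop:reduction-to-fd-case}, where $X_m=\big(\bigoplus_{j\in R_m}\ell_1^j\big)_{\ell_\infty}$. Normalizing to land in $L_1$, I would apply Theorem~\ref{thm:general-dichotomy} to the sequence $(T^{(m)})$. If alternative~(i) holds --- the identity operators $\id_{\ell_1^k}$ uniformly factor through the $T^{(m)}$ --- then by Proposition~\ref{prop:reduction-to-fd-case}(i) the operator $T$ factors the identity on $X$, so the closed ideal generated by $T$ is $\cB(X)$ and $T\notin\cGb_{\co}\sur(X)$ is consistent; this is exactly the case we want. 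If alternative~(ii) holds --- the $T^{(m)}$ have uniform approximate lattice bounds --- then (here using that each $X_m$ is a finite $\ell_\infty$-sum, so that the domain can be replaced by a single $\ell_1^{N_m}$ via the quotient map onto it, which is where surjectivity enters) Proposition~\ref{prop:lattice-bounds-factorization}(ii) tells us the $T^{(m)}$ uniformly approximately factor through $\ell_\infty^n$ ($n\in\bn$), so by Proposition~\ref{prop:reduction-to-fd-case}(ii) $T$ approximately factors through $\co$; combined with a routine argument that an operator on $X$ which approximately factors through $\co$ actually lies in $\cGb_{\co}\sur(X)$ (precompose with the canonical quotient $Q\colon X\to\big(\bigoplus_m\ell_1^{N_m}\big)_{\co}$), we conclude $T\in\cGb_{\co}\sur(X)$, contradicting our hypothesis. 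Hence only case~(i) can occur, which is what we wanted.

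The main obstacle I expect is the bookkeeping in the reduction from $T\in\cB(X)$ to the row operators $T^{(m)}$ while keeping track of the \emph{surjective hull}: the statement of Proposition~\ref{prop:reduction-to-fd-case}(ii) is phrased in terms of $T$ itself factoring approximately through $\co$, whereas what we are handed is membership in $\cGb_{\co}\sur(X)$, so I need the correct dictionary between "$T$ approximately factors through $\co$ after precomposition with a quotient map" and "$T\in\cGb_{\co}\sur(X)$", and I must verify that Proposition~\ref{prop:lattice-bounds-factorization}(ii) applies --- that is, that replacing the domain $X_m=\big(\bigoplus_{j\in R_m}\ell_1^j\big)_{\ell_\infty}$ by a single $\ell_1^{N_m}$ via a quotient map is legitimate and does not destroy the uniform bounds. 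A second, more minor point is checking that the compact perturbation from Lemma~\ref{lem:loc-finite-perturbation} is harmless: since $\cK(X)\subseteq\cGb_{\co}(X)\subseteq\cGb_{\co}\sur(X)$, perturbing $T$ by a compact operator changes neither its membership in $\cGb_{\co}\sur(X)$ nor the closed ideal it generates modulo that ideal, so the dichotomy can be run on the locally finite representative. Once these are in place, the argument is a clean trichotomy-free application of the general dichotomy theorem together with the two reduction propositions.
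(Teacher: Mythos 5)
Your overall strategy — reduce to a locally finite matrix $T$, form the row operators $T^{(m)}$, and run the general dichotomy (Theorem~\ref{thm:general-dichotomy}) to show that any $T$ with $\id_X$ not factoring through it lands in $\cGb_{\co}\sur(X)$ — is exactly the paper's argument for the second half of the theorem, stated in contrapositive form. However, there is a genuine gap at the very beginning: you assert that $\cGb_{\co}\sur(X)$ is proper ``hence also'' because $\cGb_{\co}(X)$ is. That inference is false in general and needs a separate proof. Passing to a hull can destroy properness; indeed the paper observes that the \emph{injective} hull $\cGb_{\co}\inj(X)$ is \emph{all} of $\cB(X)$, so the analogous ``hence also'' fails spectacularly for the dual construction. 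Without properness your dichotomy argument only says ``every proper closed ideal lies in $\cGb_{\co}\sur(X)$,'' which is vacuous if $\cGb_{\co}\sur(X)=\cB(X)$. The paper's proof of properness is a duality argument with real content: if $\id_X\in\cGb_{\co}\sur(X)$, one may take a quotient map from a separable space, hence from $\ell_1$, that approximately factors through $\co$; dualizing yields an embedding of $X^*=\big(\bigoplus_n\ell_\infty^n\big)_{\ell_1}$ into $\ell_\infty$ approximately factoring through $\ell_1$, which would make $\ell_1$ contain $\ell_\infty^n$ uniformly, contradicting that $\ell_1$ has cotype~2. You need something like this.

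On the case~(ii) branch, you correctly sense where surjectivity enters, but the execution is muddled and the parenthetical precomposition ``with the canonical quotient $Q\colon X\to\big(\bigoplus_m\ell_1^{N_m}\big)_{\co}$'' is in the wrong direction (that map is not a surjection onto $X$, so it contributes nothing to membership in the surjective hull). What the paper actually does: fix a quotient map $\pi\colon\ell_1\to X$ and quotients $\pi_m\colon\ell_1^{N_m}\to X_m$ with $\tfrac12 B_{X_m}\subset\pi_m(B_{\ell_1^{N_m}})$, set $\pit=\diag(\pi_m)$, and use the lifting property of $\ell_1$ to produce $\Qt\colon\ell_1\to\big(\bigoplus_m\ell_1^{N_m}\big)_{\co}$ with $Q\pi=\pit\Qt$. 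Proposition~\ref{prop:lattice-bounds-factorization}(ii) applies to the $T^{(m)}\pi_m$ (not to the $T^{(m)}$ themselves, since their domains are not $\ell_1$-spaces), and gives that $\Tt\pit=\diag(T^{(m)}\pi_m)$ approximately factors through $\co$. Then $T\pi=\Tt\pit\Qt$ approximately factors through $\co$, and since $\pi$ is a quotient map this is precisely the statement $T\in\cGb_{\co}\sur(X)$. So the two points you flagged as ``bookkeeping'' in fact require the $\ell_1$ lifting property, and the earlier unflagged claim about properness is the more serious omission.
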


\begin{proof}
  We first show that $\cGb_{\co}\sur(X)$ is a proper
  ideal. Assume, for a contradiction, that this ideal contains
  $\id_X$, \ie that some quotient map $Z\to X$ approximately factors
  through $\co$. Without loss of generality we can assume that $Z$ is
  separable. By considering a quotient map $\ell_1\to Z$, we may also
  assume that $Z=\ell_1$, so there is an
  embedding $X^*=\big(\bigoplus_{n=1}^\infty\ell_\infty^n
  \big)_{\ell_1}\to \ell_\infty$ which approximately factors through
  $\ell_1$. It follows easily that $\ell_1$ contains
  $\ell_\infty^n$ ($n\in\bn$) uniformly. This is impossible, \eg
  because $\ell_1$ has cotype~2.

  Now fix $T\in\cB(X)$. We are going to show that if $\id_X$ does not
  factor through $T$, then $T$ belongs to
  $\cGb_{\co}\sur(X)$. This will prove that every proper ideal
  is contained in $\cGb_{\co}\sur(X)$, and our proof is then
  complete.

  Without loss of generality we can assume that $T$ is locally finite
  (Lemma~\ref{lem:loc-finite-perturbation}). We are going to use the
  notation introduced before
  Proposition~\ref{prop:reduction-to-fd-case}:
  $R_m=\{j\in\bn:\,T_{m,j}\neq 0\}$ is the $m^{\text{th}}$ row support
  of $T$, $X_m=\big(\bigoplus_{j\in R_m}\ell_1^j\big)_{\ell_\infty}$,
  and $T^{(m)}\colon X_m\to\ell_1^m$ is the $m^{\text{th}}$ row of
  $T$.

  Fix quotient maps
  $\pi\colon\ell_1\to X$ and $\pi_m\colon \ell_1^{N_m}\to X_m$ with
  \[
  \frac12 B_{X_m} \subset \pi_m \big(B_{\ell_1^{N_m}}\big) \subset
  B_{X_m}\qquad (m\in\bn).
  \]
  Note that $\pit=\diag(\pi_m)\colon \Big( \bigoplus_m \ell_1^{N_m}
  \Big)_{\co}\to \big( \bigoplus_m X_m \big)_{\co}$ is also a quotient
  map. 

  Recall from the proof of Proposition~\ref{prop:reduction-to-fd-case}(ii)
  that $T$ factors through $\Tt=\diag (T^{(m)})$ via the map $Q\colon
  X\to\big( \bigoplus_m X_m \big)_{\co}$ given by $Qx=\big( Q^{(m)}(x)
  \big)_{m=1}^\infty$ for $x\in X$. By the lifting property of $\ell_1$
  there is a map $\Qt\colon\ell_1\to \Big( \bigoplus_m \ell_1^{N_m}
  \Big)_{\co}$ with $\norm{\Qt}\leq 2$ such that $Q\pi=\pit\Qt$. We
  thus have the following commuting diagram:
  \[
  \begin{diagram}
    \ell_1 & \rTo^{\pi} & X & &\\
    \dTo_{\Qt} & & \dTo_{Q} & \rdTo^{T} &\\
    \Big( \bigoplus_m \ell_1^{N_m} \Big)_{\co} & \rTo^{\pit} & \Big(
    \bigoplus_m X_m \Big)_{\co} & \rTo^{\Tt} & \Big( \bigoplus_m
    \ell_1^m \Big)_{\co}\ .
  \end{diagram}
  \]
  We claim that $T\pi$ approximately factors through
  $\co$. Since $T$ does not factor $\id_X$, the $T^{(m)}$ do not factor
  $\id_{\ell_1^k}\ (k\in\bn)$ uniformly
  (Proposition~\ref{prop:reduction-to-fd-case}(i)). By
  Theorem~\ref{thm:general-dichotomy},
  the $T^{(m)}$, and hence the $T^{(m)}\pi_m$, have uniform
  approximate lattice bounds. It follows by
  Proposition~\ref{prop:lattice-bounds-factorization}(ii) that the
  $T^{(m)}\pi_m$ uniformly approximately factor through
  $\ell_\infty^n\ (n\in\bn)$. This implies that $\Tt\pit$
  approximately factors through $\co$, and hence so does $T\pi$.
\end{proof}

\begin{rem}
  Of course, we have $\cGb_{\co}(X)\subset
  \cGb_{\co}\sur(X)$, but we do not know whether this
  inclusion is strict \ie whether there exist closed ideals of
  $\cB(X)$ other than those listed in
  Proposition~\ref{prop:ideal-structure-easy}.
\end{rem}

\section{Perturbing operators with uniform approximate lattice bounds}
\label{sec:perturb-unif-latt-bnd}

In Proposition~\ref{prop:lattice-bounds-factorization}(ii), can we
replace $\ell_1^{N_m}$ by more general spaces $X_m$? \Ie given
operators $T_m\colon X_m\to L_1\ (m\in\bn)$ with uniform
approximate lattice bounds, do the $T_m$ uniformly approximately
factor through $\ell_\infty^n\ (n\in\bn)$?
Proposition~\ref{prop:lattice-bounds-factorization}(i) gives an
affirmative answer to this question \emph{provided }there exist
arbitrarily small perturbations of the $T_m$ with uniform lattice
bounds. This leads to the following question.
\begin{question}
  Let $T_m\colon X_m\to L_1\ (m\in\bn)$ be a uniformly bounded
  sequence of operators. Assume that the $T_m$ have uniform
  approximate lattice bounds. Does there exist, for all $\vare>0$,
  a sequence $S_m\colon X_m\to L_1\ (m\in\bn)$ of operators
  with $\norm{T_m-S_m}<\vare$ for all $m$ such that the $S_m$ have uniform
  lattice bounds?
\end{question}
One cannot hope for a positive answer for a general sequence $(X_m)$
of Banach spaces: \eg the diagonal operators
$A_m\colon\ell_2^m\to\ell_1^m$ below give a simple counterexample (\cf
Proposition~\ref{prop:perturb-simple-example}). However, the proof of
Proposition~\ref{prop:lattice-bounds-factorization}(ii) shows that we
do have a positive answer in the case when each $X_m$
is an $\ell_1$-space. We would
hope to generalize this to the case when each $X_m$ is a finite
$\ell_\infty$-direct sum of finite-dimensional $\ell_1$-spaces. A
positive answer in that case together with
Theorem~\ref{thm:general-dichotomy} and
Proposition~\ref{prop:lattice-bounds-factorization}(ii) would provide
a positive answer to the problem raised in the Introduction (stated
before Theorem~\ref{mainthm:dichotomy-indep-sym-case}). In turn, this would imply
(by Proposition~\ref{prop:reduction-to-fd-case}) that the list in
Proposition~\ref{prop:ideal-structure-easy} is a complete list of the
closed ideals of $\cB(X)$ for our space $X=\thespace$.

In this section we present an example that shows that the above question
has a negative answer even in the case when each $X_m$ is a
finite-dimensional $\ell_\infty$-space. Here it will be convenient to
use a different normalization: the range spaces will be $\ell_1^m\
(m\in\bn)$ instead of $L_1$.

For $m\in\bn$ set $N_m=2^m$ and $X_m=\ell_\infty^{N_m}$. Let
$r^m_i\in X_m^*$ be the $i^{\text{th}}$ Rademacher function, $i=1,\dots,m$,
normalized with respect to the $\ell_\infty$-norm, \ie the coordinates
of each $r^m_i$ are $\pm 1$. Let
$e^m_i,\ i=1,\dots,m$, denote the standard basis of $\br^m$. Define
$T_m\colon X_m\to\ell_1^m$ by defining its adjoint
\[
T_m^*\colon\ell_\infty^m \rTo X_m^*\ ,\qquad e^m_i\rMapsto
\frac1{\sqrt{m}}\frac1{N_m} r^m_i\ ,\qquad i=1,\dots,m\ .
\]
Thus we have
\[
\ip{T_m x}{e^m_i} = \ts\frac1{\sqrt{m}N_m}
\ds\bigip{x}{r^m_i}\ ,\qquad x\in\ell_\infty^{N_m},\ i=
1,\dots,m\ .
\]
Note that $\norm{T_m}\leq 1$ for all~$m$. We now show that the $T_m$
have uniform approximate lattice bounds. We have factorizations
\[
\begin{diagram}
  X_m &            & \rTo^{T_m} &            & \ell_1^m  \\
      & \rdTo_{B_m} &           & \ruTo_{A_m} &          \\
      &            &  \ell_2^m &             &          \\
\end{diagram}
\]
obtained from its dual
\[
\begin{diagram}
  \ell_\infty^m &             & \rTo^{T_m^*} &              & X_m^*\\
               & \rdTo_{A_m^*} &            & \ruTo_{B_m^*} &      \\
               &              &  \ell_2^m  &              &      \\
\end{diagram}
\]
where $A_m^* (e^m_i)=\frac1{\sqrt{m}}e^m_i$ and
$B_m^*(e^m_i)=\frac1{N_m}r^m_i$ for $i= 1,\dots,m$. Note that
$\norm{A_m}=1$ (consider extreme points of
$B_{\ell_\infty^m}$) and $\norm{B_m}= 1$ for all $m\in\bn$. Thus,
in particular, it is sufficient to show that the $A_m$ have
uniform approximate lattice bounds.

\begin{prop}
\label{prop:simple-example}
  Given $\vare>0$, let $C=\frac1{\vare}$. Then for each $m\in\bn$ and
  $x=\sum_{i=1}^m x_i e^m_i\in B_{\ell_2^m}$ we have
  $\bignorm{A_mx\restrict_L}_{\ell_1^m} \leq \vare$, where
  $L=L(m,x)=\{ i:\,\abs{\ip{A_mx}{e^m_i}}>C/m\}$.
\end{prop}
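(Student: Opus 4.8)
The plan is to make the operator $A_m$ completely explicit at the level of coordinates and then apply a one-line pointwise estimate to the ``large'' coordinates; no properties of the Rademacher functions enter at this stage, since everything about them was already absorbed into the definition of $B_m$.

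First I would read off from the definition of $A_m^*$ that, for $x=\sum_{i=1}^m x_ie^m_i\in\ell_2^m$,
\[
\ip{A_mx}{e^m_i}=\ip{x}{A_m^*e^m_i}=\tfrac1{\sqrt m}\,x_i\qquad(i=1,\dots,m)\ ,
\]
so that $A_m$ is just $\tfrac1{\sqrt m}$ times the formal identity and $\norm{A_mx\restrict_L}_{\ell_1^m}=\tfrac1{\sqrt m}\sum_{i\in L}\abs{x_i}$ for every $L\subset\{1,\dots,m\}$. In particular the set in question is
\[
L=L(m,x)=\Big\{i:\,\abs{\ip{A_mx}{e^m_i}}>\tfrac Cm\Big\}=\Big\{i:\,\abs{x_i}>\tfrac C{\sqrt m}\Big\}\ .
\]

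Next I would use that for each $i\in L$ we have $\tfrac{\sqrt m}{C}\abs{x_i}>1$, whence $\abs{x_i}<\tfrac{\sqrt m}{C}\,x_i^2$; summing over $i\in L$ and using $\sum_{i=1}^m x_i^2=\norm{x}_{\ell_2^m}^2\leq1$ gives $\sum_{i\in L}\abs{x_i}\leq\tfrac{\sqrt m}{C}$, and dividing by $\sqrt m$ yields $\norm{A_mx\restrict_L}_{\ell_1^m}\leq\tfrac1C=\vare$, which is the assertion.

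I do not anticipate any real obstacle: the only points that need care are the identification of $A_m$ from its adjoint (with the attendant factor $\tfrac1{\sqrt m}$) and the translation of the threshold $\tfrac Cm$ for the coordinates of $A_mx$ into the threshold $\tfrac C{\sqrt m}$ for the coordinates of $x$. Granting the proposition, it is then immediate that $g_m=\tfrac Cm\sum_{i=1}^m e^m_i$, which has $\norm{g_m}_{\ell_1^m}=C$, is a uniform approximate lattice bound for the $A_m$ (decompose $A_mx$ into its restrictions to $L$ and to $\{1,\dots,m\}\setminus L$: on the complement of $L$ the coordinates are bounded by $C/m$, and on $L$ the $\ell_1^m$-norm is at most $\vare$ by the proposition), and hence, via the factorization $T_m=A_mB_m$ with $\norm{B_m}=1$, also for the $T_m$.
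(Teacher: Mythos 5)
Your proof is correct; it computes the same key reduction as the paper (namely $\ip{A_mx}{e_i^m}=x_i/\sqrt m$, so $L=\{i:\abs{x_i}>C/\sqrt m\}$ and $\norm{A_mx\restrict_L}_{\ell_1^m}=\tfrac1{\sqrt m}\sum_{i\in L}\abs{x_i}$), but finishes with a slightly different estimate. The paper's proof first bounds the cardinality of $L$ via Chebyshev ($\abs{L}\,C^2/m\le\norm{x}_2^2\le1$) and then applies Cauchy--Schwarz to get $\sum_{i\in L}\abs{x_i}/\sqrt m\le\sqrt{\abs L/m}\,\norm{x}_2\le1/C$. You instead bootstrap pointwise on each $i\in L$: since $\abs{x_i}>C/\sqrt m$ you have $\abs{x_i}<(\sqrt m/C)\,x_i^2$, and summing and dividing by $\sqrt m$ gives the bound directly, with no need to control $\abs L$ or invoke Cauchy--Schwarz. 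Both arguments exploit the same mechanism --- the $\ell_2$-unit-ball constraint is incompatible with many large coordinates --- but yours eliminates a step, and is in that sense more economical. Your closing remark correctly identifies $g_m=\tfrac Cm\sum_i e^m_i$ as the uniform approximate lattice bound and matches the paper's discussion surrounding the proposition.
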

\begin{proof}
  For $x\in B_{\ell_2^m}$ we have $L=L(m,x)=\big\{
  i:\,\abs{x_i}>C/\sqrt{m}\big\}$. Since
  $\abs{L}\frac{C^2}{m}\leq\norm{x}_{\ell_2^m}^2$, by Cauchy--Schwarz we
  get
  \[
    \bignorm{A_mx\restrict_L}_{\ell_1^m} = \sum _{i\in L}
    \frac{\abs{x_i}}{\sqrt{m}} \leq \sqrt{\frac{\abs{L}}{m}}\cdot
\norm{x}_{\ell_2^m} \leq \frac1{C}=\vare\ .\qedhere
\]
\end{proof}
This shows that for any $\vare>0$ and for $C=\frac1{\vare}$ we have
\[
T_m(B_{X_m})\subset \big\{ y={\ts \sum
y_ie^m_i}\in\ell_1^m:\, \abs{y_i}\leq \tfrac{C}{m}\text{ for
}i=1,\dots,m \big\} +\vare B_{\ell_1^m}\ .
\]
Thus the $T_m$ have uniform approximate lattice bounds. The
difficult part is to show that the $T_m$ cannot be perturbed to get
uniform lattice bounds. We first show this for the $A_m$. Although we
do not need this, the proof is much simpler than for the $T_m$ and
contains some of the ideas used later.

\begin{prop} 
\label{prop:perturb-simple-example}
  Let $\vare\in(0,1)$. Assume that for all $m\in\bn$ there exist
  $S_m\colon \ell_2^m\to\ell_1^m$ and $g_m\in\ell_1^m$ such that
  \begin{equation}
    \label{eq:S-approx-A} \norm{S_m-A_m} < \vare
    \end{equation}
\begin{equation}
    \label{eq:g-bounds-S} \abs{S_mx} \leq g_m\quad\text{for all }x\in
    B_{\ell_2^m}\ .
    \end{equation}
  Then $\sup \norm{g_m}_{\ell_1^m}=\infty$.
\end{prop}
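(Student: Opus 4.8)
The plan is to produce, for each $m$, a lower bound on $\norm{g_m}_{\ell_1^m}$ that grows with $m$, by testing $S_m$ against the standard basis of $\ell_2^m$. Put $R_m=S_m-A_m\colon\ell_2^m\to\ell_1^m$. Applying the hypothesis $\abs{S_mx}\le g_m$ with $x=e^m_i\in B_{\ell_2^m}$ gives $g_m(i)\ge\abs{(S_me^m_i)(i)}$ (so in particular $g_m\ge0$). From $A_m^*(e^m_i)=\tfrac1{\sqrt m}e^m_i$ one gets $(A_me^m_i)(i)=\ip{e^m_i}{A_m^*e^m_i}=\tfrac1{\sqrt m}$, hence $(S_me^m_i)(i)=\tfrac1{\sqrt m}+\rho_i$, where $\rho_i:=(R_me^m_i)(i)$ is the $i$-th diagonal entry of $R_m$. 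Therefore
\[
\norm{g_m}_{\ell_1^m}=\sum_{i=1}^m g_m(i)\ \ge\ \sum_{i=1}^m\Bigabs{\tfrac1{\sqrt m}+\rho_i}\ \ge\ \sqrt m-\sum_{i=1}^m\abs{\rho_i}\,,
\]
and the whole argument reduces to showing $\sum_{i=1}^m\abs{\rho_i}<\vare\sqrt m$, for then $\norm{g_m}_{\ell_1^m}>(1-\vare)\sqrt m\to\infty$ since $\vare<1$.

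To get that estimate I would average over signs. Let $\eta_i=\sgn\rho_i$ and let $\theta=(\theta_i)_{i=1}^m$ be independent Rademacher variables. Expanding the pairing and using $\be[\theta_i\theta_j]=\delta_{ij}$,
\[
\be\,\bigip{R_m\theta}{(\eta_i\theta_i)_{i=1}^m}=\sum_{i,j=1}^m\be[\theta_i\theta_j]\,\eta_j\,(R_me^m_i)(j)=\sum_{i=1}^m\eta_i\rho_i=\sum_{i=1}^m\abs{\rho_i}\,.
\]
On the other hand, for every realization of $\theta$ we have $\norm{\theta}_{\ell_2^m}=\sqrt m$ and $\norm{(\eta_i\theta_i)_{i=1}^m}_{\ell_\infty^m}\le1$, so by~\eqref{eq:S-approx-A} and the duality of $\ell_1^m$ with $\ell_\infty^m$,
\[
\bigabs{\bigip{R_m\theta}{(\eta_i\theta_i)_{i=1}^m}}\ \le\ \norm{R_m\theta}_{\ell_1^m}\ \le\ \norm{R_m}\,\sqrt m\ <\ \vare\sqrt m\,.
\]
Taking expectations gives $\sum_{i=1}^m\abs{\rho_i}<\vare\sqrt m$, completing the proof. (Equivalently, the same averaging shows that the Hilbert--Schmidt norm of $R_m$ is at most $\vare$, after which Cauchy--Schwarz bounds the sum of the $m$ diagonal entries of $R_m$ by $\sqrt m\,\vare$.)

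The estimate $\sum_i\abs{\rho_i}<\vare\sqrt m$ is the only real difficulty. The cheap bound $\abs{\rho_i}\le\norm{R_me^m_i}_{\ell_1^m}\le\norm{R_m}<\vare$, valid for each single $i$, yields merely $\sum_i\abs{\rho_i}<m\vare$, which is useless; and the hypothesis $\vare<1$ is genuinely needed (for instance $S_m=\tfrac1m A_m$ has $\norm{S_m-A_m}<1$ for all $m$, yet admits lattice bounds with $\norm{g_m}_{\ell_1^m}\le m^{-1/2}$). What rescues the argument is that the $\ell_2^m\to\ell_1^m$ norm of $R_m$ controls not its columns one at a time but all signed combinations of them at once, and the Rademacher average is precisely the device that turns this into control of the diagonal. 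This same idea --- moving from the norm of an operator to control of a suitable ``diagonal'' by averaging over signs --- is, I believe, what the authors have in mind when they note that the easy proof for the $A_m$ contains ideas reused for the harder case of the $T_m$.
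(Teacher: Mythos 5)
Your proof is correct and takes a genuinely different route from the paper's, while exploiting the same underlying tension: $A_m$ has a large diagonal, but the lattice bound forces $S_m$ to be small along the diagonal on average. The paper symmetrizes: it averages $\pi^{-1}S\pi$ over all permutations $\pi$ (and separately over the sign-flip involution $R$) to reduce $S$ to a matrix determined by just two scalars, derives bounds on those scalars from the lattice bound, and then tests against the alternating vector $\frac1{\sqrt m}\sum(-1)^ie_i$; this yields $\norm{g_m}_{\ell_1^m}\geq\frac{(1-\vare)\sqrt m}{3}$. You instead leave $S_m$ alone, extract only its diagonal via the lattice bound applied to basis vectors, and control the off-diagonal contamination $\sum_i\abs{\rho_i}$ directly by a Rademacher average, using the fact that the $\ell_2\to\ell_1$ norm of $R_m=S_m-A_m$ controls all signed column combinations simultaneously rather than columns one at a time; this yields the slightly cleaner $\norm{g_m}_{\ell_1^m}>(1-\vare)\sqrt m$. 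Your approach is shorter and conceptually parallel to the paper's heavier treatment of the $T_m$ in Theorem~\ref{thm:no-perturb-with-unif-bound} (where a similar sign-averaging against a flat vector appears), so it arguably illuminates the motivating remark even more sharply than the paper's own proof. The parenthetical remark about the Hilbert--Schmidt norm is also valid — one does have $\norm{R_m}_{\mathrm{HS}}\leq\norm{R_m}_{\ell_2^m\to\ell_1^m}$, which can be seen by a two-sided sign average — though it is not literally ``the same averaging'' as the one you use in the body of the argument.
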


\begin{proof}
  Fix $m\in \bn$. We will show that $\norm{g_m}_{\ell_1^m}\geq
  \frac{(1-\vare)\sqrt{m}}3$. For the rest of the proof we drop
  the subscript~$m$; $\pi$ will denote a permutation of
  $\{1,\dots,m\}$ as well as the corresponding linear map on $\br^m$
  given by $e_i\mapsto e_{\pi(i)}$. Note that $A=\pi^{-1}A\pi$ for
  all $\pi$. Let
  \[
  \Sb=\frac1{m!} \sum_{\pi} \pi^{-1}S\pi\qquad\text{and}\qquad
  C=\norm{g}_{\ell_1^m}=\sum _{i=1}^m g(i)\ .
  \]
  Then $\norm{\Sb-A}<\vare$ and
  \[
  \bigabs{\ip{\Sb x}{e_i}}\leq \frac1{m!} \sum_{\pi}
  \bigabs{\ip{S\pi (x)}{\pi (e_i)}} \leq \frac1{m!} \sum_\pi g(\pi(i))
  =\frac{C}{m}\ .
  \]
  Thus, without of loss of generality, $g$ is the constant function
  $\frac{C}{m}$ and $S=\pi^{-1}S\pi$ for all $\pi$. It follows
  that for some $a,b\in\br$ we have $\ip{Se_i}{e_i}=\frac{a}{m}$
  for all~$i$, and $\ip{Se_i}{e_j}=\frac{b}{m(m-1)}$ for all
  $i\neq j$.

  Now by~\eqref{eq:g-bounds-S} we have $\abs{a}\leq C$ and
  $\abs{b}\leq C(m- 1)$. We next apply~\eqref{eq:S-approx-A} to
  $x= \frac1{\sqrt{m}}\sum (-1)^i e_i$ to obtain
  \[
    \vare  > \norm{Ax-Sx}_{\ell_1^m} \geq \norm{Ax}_{\ell_1^m} - \norm{Sx}_{\ell_1^m}
    \geq \ts 1 - \frac1{\sqrt{m}}\Big( \frac{\abs{a}}{m} +
    \frac{2\abs{b}}{m(m-1)} \Big)\cdot m \geq 1
  - \frac{3C}{\sqrt{m}}\ ,
  \]
  from which our claim follows.
\end{proof}

\begin{rem}
  The motivation behind the proof of
  Proposition~\ref{prop:perturb-simple-example} is as follows. In
  contrast to~$A$, $S$ cannot be large on the diagonal because it has
  a lattice bound. On the other hand, being close to $A$, $S$ has norm
  close to~1, so the off-diagonal entries of~$S$ must make a
  significant contribution to the norm of~$S$. Next, since $A$ is
  symmetric, we could ``symmetrize'' $S$, and hence assume that~$S$ is
  constant off the diagonal. Applying $S$ to a
  ``flat'' vector whose coefficients alternate in sign, we produce a small
  vector due to cancellations. On the other hand, when we apply the
  diagonal operator $A$ to the same vector, no cancellations occur
  making the outcome large. This contradicts that $A$ and $S$ are
  close in norm. The idea behind the proof of
  Theorem~\ref{thm:no-perturb-with-unif-bound} below is exactly the
  same.
\end{rem}

We now turn to the proof that the $T_m$ cannot be perturbed to have
uniform lattice bounds. By Khintchine's inequality in $L_1$ (see, for
example~\cite{jameson:87}), with $K=\sqrt{2}$ we have
\begin{equation}
  \label{eq:khintchin}
  \frac1{K} \Big(\sum _{i=1}^m a_i^2\Big)^{1/2} \leq
  \Bignorm{\sum_{i=1}^m a_i \ts\frac1{N_m}r^m_i}_{\ell_1^{N_m}} \ds\leq \Big(\sum
  _{i=1}^m a_i^2\Big)^{1/2}\qquad\text{for all
  }(a_i)_{i=1}^m\in\br^m\ .
\end{equation}
\begin{thm}
  \label{thm:no-perturb-with-unif-bound}
  Let $0<\vare<\frac1{4K}$. Assume that for all $m\in\bn$ there exist
  $g_m\in\ell_1^m$ and $S_m\colon\ell_\infty^{N_m}\to\ell_1^m$ such
  that
  \begin{equation}
    \norm{S_m-T_m}<\vare\ ,\label{eq:counter-perturb}
    \end{equation}
  \begin{equation}
    \abs{S_mx}\leq g_m\quad\text{for all }x\in
    B_{\ell_\infty^{N_m}}\ .\label{eq:counter-exact-bound}
    \end{equation}
  Then $\sup_m\norm{g_m}_{\ell_1^m}=\infty$.
\end{thm}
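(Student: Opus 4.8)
My plan is to adapt the symmetrisation argument of Proposition~\ref{prop:perturb-simple-example}, exploiting the full symmetry group of $T_m$ coming from the Rademacher structure. This group turns out to be large enough that a symmetrised perturbation of $T_m$ must in fact be a \emph{scalar multiple} of $T_m$, and the contradiction is then immediate. Index the $N_m=2^m$ coordinates of $X_m=\ell_\infty^{N_m}$ by $\omega\in\{-1,1\}^m$, so that $r^m_i$ is the coordinate functional $\omega\mapsto\omega_i$, and write $(e_\omega)$ for the corresponding unit vector basis of $X_m$; thus $T_m e_\omega=\frac1{\sqrt m N_m}(\omega_1,\dots,\omega_m)$, and $\ip{r^m_i}{r^m_j}=N_m\delta_{i,j}$ by orthogonality of the Rademachers. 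Let $G$ be the group generated by the permutations of the $m$ ``directions'' $1,\dots,m$ together with the $m$ single-direction sign flips $\omega\mapsto\omega^{(k)}$ (which flip the $k$-th entry of $\omega$). Every $\gamma\in G$ permutes the index set $\{-1,1\}^m$, hence acts as an isometry $U_\gamma$ of $X_m$, and simultaneously permutes and changes the signs of $r^m_1,\dots,r^m_m$, hence acts as an isometry $V_\gamma$ of $\ell_1^m$ via the basis $(e^m_i)$; from the formula $\ip{T_m x}{e^m_i}=\frac1{\sqrt m N_m}\ip{x}{r^m_i}$ one checks that $V_\gamma^{-1}T_mU_\gamma=T_m$ for all $\gamma\in G$.

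Assume the hypothesis; then $g_m\ge 0$ and we set $C_m=\norm{g_m}_{\ell_1^m}=\sum_i g_m(i)$. Put $\Sb_m=\frac1{\abs G}\sum_{\gamma\in G}V_\gamma^{-1}S_mU_\gamma$. Averaging \eqref{eq:counter-perturb} over $G$, an isometric action, and using $V_\gamma^{-1}T_mU_\gamma=T_m$, gives $\norm{\Sb_m-T_m}<\vare$. Each $V_\gamma^{-1}$ acts on $\ell_1^m$ by sign changes composed with a coordinate permutation $P_\gamma$, so $\abs{V_\gamma^{-1}S_mU_\gamma x}=P_\gamma\abs{S_mU_\gamma x}\le P_\gamma g_m$ by \eqref{eq:counter-exact-bound} (since $U_\gamma x\in B_{X_m}$); hence $\abs{\Sb_m x}\le\frac1{\abs G}\sum_\gamma P_\gamma g_m$, which equals the constant vector with every entry $C_m/m$, because the permutation parts of the elements of $G$ run uniformly over all permutations of $\{1,\dots,m\}$. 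Thus $\Sb_m$ is $G$-invariant and $\bigabs{\ip{\Sb_m x}{e^m_i}}\le C_m/m$ for all $i$ and all $x\in B_{X_m}$. Now $G$-invariance rigidifies $\Sb_m$: writing $s_{\omega,i}=\ip{\Sb_m e_\omega}{e^m_i}$, invariance under the flip in direction $k$ yields $s_{\omega^{(k)},i}=s_{\omega,i}$ for $i\neq k$ and $s_{\omega^{(k)},k}=-s_{\omega,k}$, so $s_{\omega,i}$ depends only on $\omega_i$ and is odd in it, i.e.\ $s_{\omega,i}=c_i\omega_i$; permutation invariance then forces $c_i$ to be independent of $i$, say $c_i=c$. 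Hence $\Sb_m e_\omega=c(\omega_1,\dots,\omega_m)=c\sqrt m N_m\,T_m e_\omega$, i.e.\ $\Sb_m=\lambda_m T_m$ with $\lambda_m=c\sqrt m N_m>0$.

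It remains to play two estimates against each other. Testing the lattice bound on $x=r^m_i\in B_{X_m}$ and using $\ip{r^m_i}{r^m_j}=N_m\delta_{i,j}$ gives $\abs c N_m=\bigabs{\ip{\Sb_m r^m_i}{e^m_i}}\le C_m/m$, hence $\lambda_m\le C_m/\sqrt m$. On the other hand, applying $T_m^*$ to $(1,\dots,1)\in B_{\ell_\infty^m}$ and invoking Khintchine's inequality \eqref{eq:khintchin} with all $a_i=1$,
\[
\norm{T_m}=\norm{T_m^*}\ge\Bignorm{\frac1{\sqrt m N_m}\sum_{i=1}^m r^m_i}_{\ell_1^{N_m}}\ge\frac1K\ ,
\]
so from $\abs{\lambda_m-1}\,\norm{T_m}=\norm{\Sb_m-T_m}<\vare<\frac1{4K}$ we get $\abs{\lambda_m-1}<\frac14$, whence $\lambda_m>\frac34$. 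Combining, $C_m\ge\lambda_m\sqrt m>\frac34\sqrt m$, and therefore $\sup_m\norm{g_m}_{\ell_1^m}=\infty$, as required.

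The main obstacle is the symmetrisation step: one must verify that averaging $S_m$ over $G$ is compatible with \emph{both} \eqref{eq:counter-perturb} (routine, as $G$ acts isometrically and fixes $T_m$) and, more delicately, \eqref{eq:counter-exact-bound} --- the point being that a signed coordinate permutation of $\ell_1^m$ sends a lattice bound to a lattice bound of the same $\ell_1^m$-norm, so the symmetrised bound is still controlled by $C_m$. Granted that, the rigidity of $G$-invariant operators does the remaining work, and the two-parameter reduction of Proposition~\ref{prop:perturb-simple-example} is replaced by a one-parameter one.
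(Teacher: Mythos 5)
Your proof is correct, and it takes a genuinely different and substantially simpler route than the paper. The paper's proof symmetrises $S_m$ only over the subgroup generated by coordinate permutations of $\{1,\dots,m\}$ together with the \emph{single global} flip $R(\alpha)=\neg\alpha$; this leaves $m$ free parameters $a_1,\dots,a_m$ in the symmetrised operator, and the rest of the proof is a careful combinatorial analysis (Lemmas~\ref{lem:counter:bound-on-matrix-coeffs}--\ref{lem:counter:binom-coeffs}: binomial-coefficient estimates, the auxiliary quantity $d(m,k)$, a Hall's-theorem injection, and a carefully constructed alternating test vector) to play $\norm{Sx}$ off against $\norm{Tx}$. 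Your observation --- easily verified on the matrix entries $T_{i,\alpha}$, since whether $i\in\alpha\triangle\{k\}$ depends on $\alpha$ in the required way --- is that $T_m$ is in fact invariant under the full hyperoctahedral group $(\bz_2)^m\rtimes S_m$, not just the $2\cdot m!$-element subgroup the paper uses. Averaging over this much larger group is still compatible with both hypotheses (the group acts by isometries and fixes $T_m$ for~\eqref{eq:counter-perturb}; the signed-permutation action on $\ell_1^m$ carries lattice bounds to lattice bounds of the same norm for~\eqref{eq:counter-exact-bound}), and the rigidity is now complete: the single-flip invariances force each $s_{\omega,i}$ to be $c_i\omega_i$, and permutation invariance forces $c_i\equiv c$, so $\Sb_m=\lambda_m T_m$. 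At that point the contradiction falls out in two lines: testing the averaged lattice bound on $r^m_i$ pins down $\abs{\lambda_m}\le C_m/\sqrt m$, while Khintchine gives $\norm{T_m}\ge 1/K$ so $\abs{\lambda_m-1}<K\vare<\tfrac14$, hence $C_m>\tfrac34\sqrt m$. Representation-theoretically this is transparent: the signed-permutation representation on $\br^m$ is irreducible, and the space of $G$-equivariant maps $\br^{N_m}\to\br^m$ is one-dimensional, so the symmetrisation \emph{must} land on $\br\cdot T_m$. You thereby eliminate all of Lemmas~\ref{lem:counter:norm-of-tx}--\ref{lem:counter:binom-coeffs}, and you also get the quantitative bound $C_m\gtrsim\sqrt m$, matching the growth rate in Proposition~\ref{prop:perturb-simple-example}. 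This is a clean improvement.
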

\begin{proof}
  We shall argue by contradiction. Assume that for some
  $0<\vare<\frac1{4K}$ there is a $C>0$ such that for all $m\in\bn$
  there exist $g_m\in\ell_1^m$ and
  $S_m\colon\ell_\infty^{N_m}\to\ell_1^m$ such
  that~\eqref{eq:counter-perturb} and~\eqref{eq:counter-exact-bound}
  hold, and moreover $\norm{g_m}_{\ell_1^m}\leq C$ for all $m\in\bn$.

  We will obtain a contradiction in a number of steps. From now on we
  fix a large $m$ (to be specified at the end of the proof), and drop $m$
  in the various subscripts and superscripts. We denote by
  $N$ the power set of $\{1,\dots,m\}$ and write the
  standard basis of $\br^N$ as $e_\alpha,\ \alpha\in
  N$. The Rademacher functions can then be expressed as
  \[
  r_i=\sum _{\alpha,\,i\in\alpha} e_\alpha -\sum _{\alpha,\,i\notin\alpha}e_\alpha\qquad
  i=1,\dots,m\ .
  \]
  The letter $\pi$ will always denote a permutation of $\{1,\dots,m\}$
  as well as the following induced maps:
  \[
  \begin{diagram}
    \ell_1^m & \rTo^{\pi} & \ell_1^m\ , & e_i & \rMapsto &
    e_{\pi(i)}\\
    N & \rTo^{\pi} & N\ , & \alpha & \rMapsto & \{
    \pi(i):\,i\in\alpha\}\\
    \ell_\infty^N & \rTo^{\pi} & \ell_\infty^N\ ,& e_\alpha & \rMapsto
    & e_{\pi(\alpha)}\ .
  \end{diagram}
  \]
  Note that the first and third interpretations of $\pi$ are
  isometries. The letter $R$ will also stand for a number of different
  maps:
  \[
  \begin{diagram}
    \ell_1^m & \rTo^{R} & \ell_1^m\ , & e_i & \rMapsto & -e_{i}\\
    N & \rTo^{R} & N\ ,& \alpha & \rMapsto & \neg\alpha=
    \{1,\dots,m\}\setminus\alpha\\
    \ell_\infty^N & \rTo^{R} & \ell_\infty^N\ , & e_\alpha & \rMapsto
    & e_{R(\alpha)}\ .
  \end{diagram}
  \]
  Here again $R$ is an isometry in the first and third definitions.
  Note also that the last map satisfies $R\big( r_i\big)=
  -r_i$, and that $R$ and $\pi$ commute in each their interpretations.
 
  Having fixed our notation, we next show that $S$ can be assumed to
  have various symmetries. We begin with the observation that $T$ is
  symmetric in the sense that it equals the composite $\pi^{-1}T\pi$:
  \[
  \begin{diagram}
    \ell_\infty^N & \rTo^{\pi} & \ell_\infty^N & \rTo^{T} & \ell_1^m &
    \rTo^{\pi^{-1}} & \ell_1^m\ .
  \end{diagram}
  \]
  Similarly, $T= RTR$. Set $\Sb=
  \frac1{m!}\sum_{\pi} \pi^{-1} S\pi$ and $C=
  \norm{g}_{\ell_1^m}=\sum_{i=1}^m g(i)$. Then $\norm{\Sb-
  T}<\vare$, and, by~\eqref{eq:counter-exact-bound}, for all $x\in
  B_{\ell_\infty^N}$ and for $i= 1,\dots,m$ we have
  \[
  \bigabs{\ip{\Sb x}{e_i}}\leq \frac1{m!} \sum_{\pi}
  \bigabs{\ip{S\pi (x)}{\pi (e_i)}} \leq \frac1{m!} \sum_\pi
  g(\pi(i))=\frac{C}{m}\ .
  \]
  Thus, without of loss of generality, we may assume that $g$ is the
  constant function $\frac{C}{m}$ and that $S= \pi^{-1}S\pi$
  for all $\pi$.

  Next we set $\Sb= \frac12 (S+ RSR)$. Then
  $\norm{\Sb- T}<\vare$, $\bigabs{\ip{\Sb x}{e_i}}\leq
  \frac{C}{m}$ for all $x\in B_{\ell_\infty^N}$ and for $i=
  1,\dots,m$. We can thus also assume that $S= RS R$.

  The above two symmetrization procedures have the following
  implications for the matrix of $S$: there
  exist $a_k\in\br,\ k= 1,\dots,m$, such that
  \[
  S_{i,\alpha} = \ip{S e_\alpha}{e_i} =%
  \begin{cases}
  a_{\abs{\alpha}} & \text{if }i\in\alpha\\
  -a_{\abs{\neg\alpha}} & \text{if }i\notin\alpha
  \end{cases}
  \qquad\qquad \alpha\in N,\ i= 1,\dots,m\ .
  \]
  To complete the proof of
  Theorem~\ref{thm:no-perturb-with-unif-bound} we require a number of
  lemmas.
  \begin{lem}
    \label{lem:counter:bound-on-matrix-coeffs}
  $\ds 2\sum_{k=1}^m \abs{a_k} \binom{m-1}{k-1} \leq \frac{C}{m}$.
  \end{lem}
\begin{proof}
  For $x\in \ell_\infty^N$ and $i=1,\dots,m$ we have
  \begin{equation}
    \label{eq:counter:action-of-S}
    \ip{Sx}{e_i} = \sum_\alpha x_\alpha S_{i,\alpha} = \sum _{k=1}^m
    a_k \sum_{\abs{\alpha}=k,\ i\in\alpha} (x_\alpha -
    x_{\neg\alpha})\ .
  \end{equation}
  Fix an arbitrary $i\in \{1,\dots,m\}$, set
  \[
  x_\alpha =%
  \begin{cases}
    \sgn (a_k) & \text{if }\abs{\alpha}= k,\ i\in\alpha\\
    -\sgn (a_k) & \text{if }\abs{\alpha}= m-k,\ i\notin\alpha\ ,
  \end{cases}
  \]
  and use~\eqref{eq:counter-exact-bound} to obtain
  \[
  \bigabs{\ip{Sx}{e_i}}= 2\sum_{k=1}^m \abs{a_k} \binom{m-1}{k-1} \leq
  \frac{C}{m}\ ,
  \]
  as required.
\end{proof}

\begin{lem}
  \label{lem:counter:norm-of-tx}
  Fix $k_0\in\bn$. Let $\vare_i\in\{-1,+1\},\ i= 1,\dots,m$. For
  $\alpha\in N$ set
  \[
  x_\alpha = \sgn\Big( \sum_{i\in\alpha} \vare_i -%
  \sum_{i\notin\alpha} \vare_i\Big)
  \]
  whenever $k_0\leq\abs{\alpha}\leq m- k_0$ (we let $\sgn(0)=0$),
  otherwise set $x_\alpha= 0$. Then there exists $m(k_0)\in\bn$
  such that $\norm{T x}_{\ell_1^m}\geq \frac1{4K}$ provided $m\geq m(k_0)$.
\end{lem}
\begin{proof}
  Recall that $T^*\colon\ell_\infty^m\to \ell_1^N$ is given by
  $T^*(e_i)= \frac1{\sqrt{m}N} r_i$, $i= 1,\dots,m$. For
  $y= \sum_{i=1}^m \vare_ie_i$ Khintchine's
  inequality~\eqref{eq:khintchin} yields
  \[
  \norm{T^* y}_{\ell_1^N} = \Bignorm{\sum _{i=1}^m \ts \frac{\vare_i}{\sqrt{m}N}
  r_i}_{\ell_1^N} \geq \ds\frac1{K}\Bignorm{\sum_{i=1}^m \ts
    \frac{\vare_i}{\sqrt{m}} e_i}_{\ell_2^m} = \ds\frac1{K}\ .
  \]
  It follows that setting $z=\sgn\big(T^*y\big)$, we have
  \[
  \norm{Tz}_{\ell_1^m} \geq \ip{Tz}{y} = \ip{z}{T^*y}
  = \norm{T^*y}_{\ell_1^N} \geq \frac1{K}\ .
  \]
  Now for any $\alpha\in N$ we have
  \[
    \ip{T^*y}{e_\alpha} = \ts\frac1{\sqrt{m}N}\ds\sum
  _{i=1}^m \vare_i \ip{r_i}{e_\alpha}
  = \ts\frac1{\sqrt{m}N}\ds \Big( \sum_{i\in\alpha} \vare_i
  -\sum_{i\notin\alpha} \vare_i\Big)\ ,
  \]
and hence
\[
z_\alpha = \sgn\Big( \sum_{i\in\alpha} \vare_i -\sum_{i\notin\alpha}%
\vare_i\Big)\ .
\]
Note that $x_\alpha=z_\alpha$ whenever $k_0\leq\abs{\alpha}\leq
m- k_0$.

Observe that if we add an element to the set
$\alpha\in N$, then the expression $\sum_{i\in\alpha} \vare_i
-\sum_{i\notin\alpha} \vare_i$ changes by at most~$2$ in absolute
value. It follows that
\[
\sum _{\abs{\alpha}=k+1} \bigabs{\ip{T^*y}{e_\alpha}} \geq \sum
_{\abs{\alpha}=k} \bigabs{\ip{T^*y}{e_\alpha}} - \binom{m}{k}
\frac{2}{\sqrt{m}N}
\]
whenever $0\leq k< \frac{m}2$. (Indeed, there exists an injection
from sets of size~$k$ to sets of size~$k+ 1$ mapping each
$\alpha$ to some  set $\beta\supset\alpha$. This can be seen using
Hall's marriage theorem.) Iterating $k_0$ times, we get
\begin{align*}
  \sum _{\abs{\alpha}=k+k_0} \bigabs{\ip{T^*y}{e_\alpha}} &\geq \sum
  _{\abs{\alpha}=k} \bigabs{\ip{T^*y}{e_\alpha}} - \sum
  _{j=0}^{k_0-1} \binom{m}{k+j} \frac{2}{\sqrt{m}N}\\
  &\geq \sum _{\abs{\alpha}=k} \bigabs{\ip{T^*y}{e_\alpha}} -
  \frac2{\sqrt{m}}
\end{align*}
whenever $0\leq k< \frac{m}2- k_0$. Summing over~$k$, we obtain
\[
\sum_{k=k_0}^{2k_0-1} \sum _{\abs{\alpha}=k}
\bigabs{\ip{T^*y}{e_\alpha}} \geq \sum _{k=0}^{k_0-1}
\sum_{\abs{\alpha}=k} \bigabs{\ip{T^*y}{e_\alpha}} -
\frac{2k_0}{\sqrt{m}}
\]
provided $k_0<\frac{m}4$. Similarly (or using
$\ip{T^*y}{e_{\neg\alpha}}= -\ip{T^*y}{e_\alpha}$), we obtain
\[
\sum_{k=k_0}^{2k_0-1} \sum _{\abs{\alpha}=m-k}
\bigabs{\ip{T^*y}{e_\alpha}} \geq \sum _{k=0}^{k_0-1}
\sum_{\abs{\alpha}=m-k} \bigabs{\ip{T^*y}{e_\alpha}} -
\frac{2k_0}{\sqrt{m}}\ .
\]
Putting these together, we finally get
\begin{align*}
  \norm{T x}_{\ell_1^m} & \geq \ip{Tx}{y} = \sum_{k_0\leq\abs{\alpha}\leq%
    m-k_0} \bigabs{\ip{e_\alpha}{T^*y}} \\[1ex]
  & \geq \frac13 \sum_\alpha \bigabs{\ip{e_\alpha}{T^*y}} -%
  \frac{4k_0}{3\sqrt{m}} > \frac1{4K}
\end{align*}
provided $m$ is sufficiently large.
\end{proof}
The quantity $d(m,k)$ in Lemmas~\ref{lem:counter:coords-of-sx}
and~\ref{lem:counter:size-of-coords-of-sx}  is defined for an even
integer $m$ as follows:
  \[
  d(m,k)=%
  \begin{cases}
    \ds\binom{\frac{m}{2}-1}{\frac{k-1}{2}}^2 &
    \text{if $k$ is odd}\\[3ex]
    \ds\binom{\frac{m}{2}-1}{\frac{k}2-1}
    \binom{\frac{m}{2}-1}{\frac{k}2}
    & \text{if $k$ is even.}
  \end{cases}
  \]
\begin{lem}
  \label{lem:counter:coords-of-sx}
  Fix $k_0\in\bn$, let $m\in\bn$ be even, and set $\vare_i= (-1)^i$
  for $i= 1,\dots,m$. Define $x= (x_\alpha)\in\ell_\infty^N$ as
  in Lemma~\ref{lem:counter:norm-of-tx}. Then for $k_0\leq
  k\leq m-k_0$ and for $j= 1,\dots,m$ we have
  \[
  \sum_{\abs{\alpha}=k,\ j\in\alpha} x_\alpha= (-1)^j\cdot d(m,k)\ .
  \]
\end{lem}
\begin{proof}
  It is sufficient to consider $j= m$. Let $E$ be the set of all
  even numbers in $\{1,\dots,m\}$. Note that for the given
  choice of signs $\vare_1,\dots,\vare _m$ we have
  \[
  x_\alpha = \sgn\Big( \sum_{i\in\alpha} \vare_i -%
  \sum_{i\notin\alpha} \vare_i\Big) = \sgn\Big( \sum_{i\in\alpha}
  \vare_i \Big)\ .
  \]
  Given $\alpha\in N$ with $\abs{\alpha}=k$ and $m\in\alpha$, let
  \[
  \beta= \{ i+ 1:\,i\leq m- 2,\ i\in\alpha\setminus E\}
  \cup \{i- 1:\,i\leq m- 2,\ i\in\alpha\cap E\} \cup
  \big( \alpha\cap\{ m- 1,m\} \big)\ .
  \]
  Then $\abs{\beta}= k$, $m\in\beta$ and $x_\alpha+
  x_\beta=0$ unless $m- 1\notin\alpha$ and either ($k$ is
  odd and) $\abs{\alpha\cap E}=\frac{k+1}2$, or ($k$ is even and)
  $\abs{\alpha\cap E}=\frac{k}2$ or $\frac{k}2+ 1$. The result
  follows.
\end{proof}

\begin{lem}
\label{lem:counter:size-of-coords-of-sx}
Let $m\in\bn$ be even. Then
\[
d(m,k) \leq
2\binom{m-1}{k-1}\binom{k}{\bigintp{\frac{k}2}}\frac1{2^k}\qquad\text{for
each }k= 1,\dots,m\ .
\]
\end{lem}
\begin{proof}
  Assume $k$ is even. Then
  \begin{multline*}
    d(m,k) \binom{m-1}{k-1}^{-1} =\\[2ex]
    \frac1{2^{k-1}} \cdot
    \frac{\big[(m-2)(m-4)\dots(m-k+2)\big]\cdot\big[(m-2)(m-4)\dots(m-k)\big]
    }{(m-1)(m-2)\dots (m-k+1)}
\cdot \frac{(k-1)!}{\big(\frac{k}2-1\big)!\cdot \big(
      \frac{k}2\big)!} \\[2ex]
    = \frac{m-2}{m-1}\cdot\frac{m-4}{m-3}\cdots \frac{m-k}{m-k+1}\cdot
    \binom{k}{k/2}\cdot \frac1{2^k}\leq\binom{k}{k/2}\cdot \frac1{2^k} \ .
  \end{multline*}
An almost identical computation works for odd $k$ except we get an
extra factor of~$2$ in that case.
\end{proof}

\begin{lem}
\label{lem:counter:binom-coeffs}
There is a universal constant $U$ such that
\[
\binom{k}{\bigintp{\frac{k}2}} \leq U\frac{2^k}{\sqrt{k}}\qquad\text{for all
}k\in\bn\  .
\]
\end{lem}
\begin{proof}
  For $\frac{k}2- \sqrt{k}\leq j<\frac{k}2$ we have
  \[
  \binom{k}{j+1} = \binom{k}{j}\cdot\frac{k-j}{j+1} \leq
  \binom{k}{j}\cdot\frac{\frac{k}2+\sqrt{k}}{\frac{k}2-\sqrt{k}} \leq
  \binom{k}{j}\cdot \Big(1 + \frac6{\sqrt{k}}\Big)
  \]
  provided $k$ is sufficiently large. It follows that for
  $\frac{k}2- \sqrt{k}\leq j<\frac{k}2$ we have
  \[
  \binom{k}{\bigintp{\frac{k}2}} \leq \Big(1 +
  \frac6{\sqrt{k}}\Big)^{\sqrt{k}} \cdot \binom{k}{j} \leq \me^6 \cdot
  \binom{k}{j}
  \]
  for sufficiently large $k$. Hence for a universal constant~$U$ and
  for all $k\in\bn$ we have
  \[
  \sqrt{k}\cdot \binom{k}{\bigintp{\frac{k}2}} \leq U\cdot 2^k\ ,
  \]
  as required.
\end{proof}

\noindent
\emph{Proof of Theorem~\ref{thm:no-perturb-with-unif-bound} continued.}
We finally have all the ingredients to obtain the required
contradiction. Choose $k_0, m\in\bn$ with
$\frac{2UC}{\sqrt{k_0}}<\frac1{4K}-\vare$, $m\geq m(k_0)$ and $m$ even. Recall
that $C$ and $\vare$ were fixed at the very beginning of the proof,
$K$ is the Khintchine constant, $U$ is the universal constant obtained
in Lemma~\ref{lem:counter:binom-coeffs} above, and $m(k_0)$ is given
by Lemma~\ref{lem:counter:norm-of-tx}.

Let $x=
(x_\alpha)\in\ell_\infty^N$ be as in
Lemma~\ref{lem:counter:norm-of-tx} with $\vare_i=
(-1)^i$. Note that $x_{\neg\alpha}=-x_\alpha$ for all $\alpha\in
N$. We have
\begin{align*}
  \norm{Sx}_{\ell_1^m} &= \sum _{i=1}^m \bigabs{\ip{Sx}{e_i}}\\
  &\leq \sum_{i=1}^m \sum _{k=1}^m 2\abs{a_k}\cdot
  \Bigabs{\sum_{\abs{\alpha}=k,\ i\in\alpha} x_\alpha}
  &&\text{by~\eqref{eq:counter:action-of-S}}\\
  & \leq m\sum _{k_0\leq k\leq m-k_0} 2\abs{a_k} d(m,k)
  &&\text{by Lemma~\ref{lem:counter:coords-of-sx}}\\
  & \leq m\sum _{k_0\leq k\leq m-k_0} 4\abs{a_k} \binom{m-1}{k-1} U
  \frac1{\sqrt{k}}&& \text{by
    Lemmas~\ref{lem:counter:size-of-coords-of-sx}
    and~\ref{lem:counter:binom-coeffs}}\\
  & \leq \frac{2UC}{\sqrt{k_0}} < \frac1{4K}-\vare
  &&\text{by Lemma~\ref{lem:counter:bound-on-matrix-coeffs}.}
\end{align*}
Finally, by Lemma~\ref{lem:counter:norm-of-tx} we have
$\norm{Tx}_{\ell_1^m}\geq\frac1{4K}$, and this
contradicts~\eqref{eq:counter-perturb}.
\end{proof}

\section{Searching for new ideals}
\label{sec:independent-case}

Proposition~\ref{prop:reduction-to-fd-case} tells us that a possible
new closed ideal in $\cB(X)$ (if there is one) is generated by an operator
defined by a sequence $T^{(m)}\colon \ell_\infty^m (\ell_1^m)\to L_1$
($m\in\bn$) which neither factors the identity operators
$\id_{\ell_1^k}$ ($k\in\bn$) uniformly, nor does it factor through
$\ell_\infty^k$ ($k\in\bn$) approximately uniformly. The main result
of this
section, stated as Theorem~\ref{mainthm:dichotomy-indep-sym-case} in
the Introduction, shows that there is no such sequence when for each
$m\in\bn$, the entries of the random matrix $\big(T^{(m)}_{i,j}\big)$
are independent, symmetric random variables.

We begin with a characterization of sequences of operators which uniformly
factor through $\ell_\infty^k$ ($k\in\bn$) in terms of the
$2$-summing norm. The 2-summing norm is defined for an operator
$U\colon E\to F$ between Banach spaces as
\begin{multline*}
  \pi_2(U)=\sup\Big\{ \big({\ts\sum_{s=1}^k
    \norm{Uz^{(s)}}^2}\big)^{1/2}:\,
  k\in\bn,\ z^{(1)},\dots,z^{(k)}\in E\ ,\\
  {\ts\sum_{s=1}^k \abs{\ip{z^{(s)}}{z^*}}^2\leq 1}\quad \V z^*\in
  B_{E^*} \Big\}\ .
\end{multline*}
We denote by $\Omega_k$ the probability space
$\big(\{1,\dots,k\},\mu_k\big)$, where $\mu_k$ is the uniform
probability measure given by $\mu_k\big(\{i\}\big)=\frac1k$ for
$i=1,\dots,k$.

\begin{thm}
  \label{thm:factorization-2-summing}
  Let $T^{(m)}\colon \ell_\infty^m (\ell_1^m)\to L_1$ ($m\in\bn$) be a
  uniformly bounded sequence of operators. Then the following are
  equivalent.
  \begin{mylist}{(iii)}
  \item
    The $T^{(m)}$ uniformly factor through $\ell_\infty^k$
    ($k\in\bn$).
  \item
    $\sup_m \pi_2\big(T^{(m)}\big) < \infty$.
  \item
    The $T^{(m)}$ uniformly factor through the formal identity maps
    \[
    \iota_k\colon \ell_\infty^k\to L_2(\Omega_k)\ ,\quad\ts \sum_{i=1}^k
    x_ie_i\mapsto \sum_{i=1}^k x_i\bi_{\{i\}}\qquad (k\in\bn).
    \]
  \end{mylist}
\end{thm}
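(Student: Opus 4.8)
\textit{Proof proposal.} I would establish the cyclic chain (i) $\Rightarrow$ (ii) $\Rightarrow$ (iii) $\Rightarrow$ (i), the first and last implications being soft and the middle one carrying the real content.

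For (i) $\Rightarrow$ (ii), suppose $T^{(m)}=B_mA_m$ with $A_m\colon\ell_\infty^m(\ell_1^m)\to\ell_\infty^{k_m}$, $B_m\colon\ell_\infty^{k_m}\to L_1$ and $\norm{A_m}\cdot\norm{B_m}\leq C$. Since $\ell_\infty^{k_m}$ is isometrically a $C(K)$-space and $B_m$ maps into $L_1$, Grothendieck's theorem gives $\pi_2(B_m)\leq K_G\norm{B_m}$, where $K_G$ is Grothendieck's constant; as $\pi_2$ is an ideal norm, $\pi_2(T^{(m)})\leq\pi_2(B_m)\norm{A_m}\leq K_GC$, uniformly in $m$. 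For (iii) $\Rightarrow$ (i), note $\norm{\iota_k}\leq1$, since $\norm{\iota_kx}_{L_2(\Omega_k)}^2=\frac1k\sum_i\abs{x_i}^2\leq\norm{x}_\infty^2$; hence a factorization $T^{(m)}=B_m\iota_{k_m}A_m$ witnessing (iii) also witnesses (i), with outer maps $A_m$ and $B_m\iota_{k_m}\colon\ell_\infty^{k_m}\to L_1$ whose norms multiply to at most $\norm{A_m}\cdot\norm{B_m}$.

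The main work is (ii) $\Rightarrow$ (iii). Put $c=\sup_m\pi_2(T^{(m)})$, fix $m$, and write $E=\ell_\infty^m(\ell_1^m)$, of dimension $m^2$. By the Pietsch Domination Theorem there is a probability measure $\mu$ on $(B_{E^*},w^*)$ with $\norm{T^{(m)}x}_{L_1}\leq c\big(\int\abs{\ip{x}{x^*}}^2\,d\mu(x^*)\big)^{1/2}$ for all $x\in E$. The quadratic form $Q_\mu\colon x\mapsto\int\abs{\ip{x}{x^*}}^2\,d\mu(x^*)$ lives in the finite-dimensional space of quadratic forms on $E$ and lies in the convex hull of the compact set $\{x\mapsto\abs{\ip{x}{x^*}}^2:x^*\in B_{E^*}\}$. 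I would first perturb $\mu$ (adding a small multiple of the uniform measure on a finite norming subset of $B_{E^*}$, e.g.\ the extreme points) so that $Q_\mu$ becomes nondegenerate, then apply Carath\'eodory's theorem and rationally approximate the resulting convex weights with a common denominator $k=k_m$; this produces a multiset $y_1^*,\dots,y_k^*\in B_{E^*}$ with
\[
\norm{T^{(m)}x}_{L_1}\leq 2c\,\Big(\frac1k\sum_{i=1}^k\abs{\ip{x}{y_i^*}}^2\Big)^{1/2}\qquad(x\in E).
\]
Now set $A_m\colon E\to\ell_\infty^k$, $A_mx=(\ip{x}{y_i^*})_{i=1}^k$; then $\norm{A_m}\leq1$ because $y_i^*\in B_{E^*}$, and $\norm{\iota_kA_mx}_{L_2(\Omega_k)}$ equals the bracketed quantity above. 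Hence $\iota_kA_mx\mapsto T^{(m)}x$ is a well-defined linear map of norm at most $2c$ on $\ran(\iota_kA_m)\subseteq L_2(\Omega_k)$; composing it with the orthogonal projection of $L_2(\Omega_k)$ onto $\ran(\iota_kA_m)$ yields $B_m\colon L_2(\Omega_k)\to L_1$ with $\norm{B_m}\leq2c$ and $T^{(m)}=B_m\iota_kA_m$. Since $\norm{A_m}\cdot\norm{B_m}\leq2c$ for every $m$, the $T^{(m)}$ uniformly factor through the $\iota_k$.

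The crux is precisely the concretization step inside (ii) $\Rightarrow$ (iii): the abstract Pietsch measure lives on the infinite set $B_{E^*}$ with arbitrary weights, and a careless finite approximation with uneven weights $\lambda_i$ forces the domain map into $\ell_\infty^k$ to have norm of order $\sqrt{k\max_i\lambda_i}$, destroying uniformity. Forcing the weights to be essentially uniform — which, after the preliminary nondegeneration, the Carath\'eodory-plus-rational-approximation argument achieves with only an arbitrarily small loss in the constant — is the technical heart; the remaining ingredients (Grothendieck's theorem, the ideal property of $\pi_2$, Pietsch domination, and the triviality $\norm{\iota_k}\leq1$) are entirely standard.
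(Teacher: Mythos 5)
Your proof is correct, and the structure is the same as the paper's: Grothendieck's theorem combined with the ideal property of $\pi_2$ gives (i) $\Rightarrow$ (ii), Pietsch gives (ii) $\Rightarrow$ (iii), and $\norm{\iota_k}\leq 1$ gives (iii) $\Rightarrow$ (i). Where you add value is in (ii) $\Rightarrow$ (iii): the paper simply invokes Pietsch's factorization theorem, which produces $T^{(m)}=\tilde u\,\iota\,\kappa$ with $\iota\colon C(\Phi)\to L_2(\mu)$ for some probability measure $\mu$ on a $1$-norming set $\Phi\subset B_{E^*}$, and tacitly leaves to the reader the passage from $L_2(\mu)$ with arbitrary atom weights to the specific $\iota_k\colon\ell_\infty^k\to L_2(\Omega_k)$ with uniform weights; your Carath\'eodory-plus-rational-approximation step supplies exactly that, and your closing remark correctly identifies why a careless discretization with uneven weights would destroy the norm bound on the $\ell_\infty^k$-valued map. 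One small comment: the preliminary perturbation of $\mu$ to make the quadratic form nondegenerate is unnecessary -- Carath\'eodory's theorem applies to a barycenter of a compact set in finite dimensions without any nondegeneracy hypothesis -- though it is harmless. What you do need, and implicitly rely on, is to take the rational approximants of the Carath\'eodory weights from above (or rescale) so that the domination inequality survives; spelling out that one-line estimate would make the argument airtight.
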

\begin{proof}
  (i)$\Rightarrow$(ii) follows from the fact that $\pi_2(\cdot)$ is an
  ideal norm and from the following consequence of Grothendieck's
  theorem (\cf~\cite[Theorem~3.5]{dies-jar-tong:95}).
  \begin{thm}
    \label{thm:grothendieck}
    Let $\Phi$ be a compact, Hausdorff space and $\mu$ an arbitrary
    measure on some measurable space. Then for $1\leq p\leq 2$ any
    operator $U\colon C(\Phi)\to L_p(\mu)$ is $2$-summing with
    $\pi_2(U)\leq K_G\norm{U}$, where $K_G$ is the Grothendieck
    constant.
  \end{thm}
  \noindent
  (ii)$\Rightarrow$(iii) is a consequence of the following special
  case of Pietsch's Factorization Theorem
  (\cf~\cite[Corollary~2.16]{dies-jar-tong:95}).
  \begin{thm}
    Let $E$ and $F$ be Banach spaces, and let $\Phi$ be a w$^*$-compact
    subset of $B_{E^*}$ which is $1$-norming for $E$. Let
    $\kappa\colon E\to C(\Phi)$ denote the canonical embedding:
    $\kappa(x)(x^*)=x^*(x),\ x\in E,\ x^*\in\Phi$.

    Then an operator $u\colon E\to F$ is $2$-summing if and only if
    there is a probability measure $\mu$ on the Borel $\sigma$-algebra
    of $\Phi$ and an operator $\ut\colon L_2(\mu)\to F$ such that
    $u=\ut\circ\iota\circ\kappa$, where $\iota\colon C(\Phi)\to
    L_2(\mu)$ is the formal identity. Moreover, $\ut$ can be chosen
    with $\norm{\ut}=\pi_2(u)$.
  \end{thm}
  \noindent
  (iii)$\Rightarrow$(i) is of course obvious.  
\end{proof}

Recall that for each $m\in\bn$ we denote by $e_{i,j}=e^{(m)}_{i,j}$
the unit vector basis of $\ell_\infty^m (\ell_1^m)$ such that the norm
of $\sum_{i,j}a_{i,j}e_{i,j}$ is given by
$\max_i\sum_j\abs{a_{i,j}}$. We identify an operator
$U\colon \ell_\infty^m(\ell_1^m)\to L_1$ with the $m\times m$ matrix
$(U_{i,j})$ in $L_1$, where $U_{i,j}=U(e_{i,j})$. We now estimate
$\pi_2(U)$ in the case the matrix entries $U_{i,j}$ form a symmetric
sequence of random variables. Here and elsewhere we will make use of
the \emph{square function inequality:} if $f_1,\dots,f_n\in L_1$ form
a symmetric sequence of random variables, then
\begin{equation}
  \label{eq:square-function}
  \frac1K \Bignorm{\Big(\sum_{i=1}^n \abs{f_i}^2\Big)^{1/2}}_{L_1}
  \leq\Bignorm{\sum_{i=1}^nf_i}_{L_1} \leq \Bignorm{\Big(\sum_{i=1}^n
    \abs{f_i}^2\Big)^{1/2}}_{L_1}\ .
\end{equation}
This is a well known consequence of Khintchine's
inequality~\eqref{eq:khintchin}.
\begin{lem}
  \label{lem:pi-2-norm}
  Let $m\in\bn$, and let $U\colon \ell_\infty^m(\ell_1^m)\to L_1$ be
  an operator such that the matrix entries $U_{i,j}$ form a symmetric
  sequence of random variables. Then
  \[
  \pi_2(U)\leq \Big(\sum_{i=1}^m \max_{1\leq j\leq m}
  \norm{U_{i,j}}_{L_2}^2\Big)^{1/2}\ .
  \]
\end{lem}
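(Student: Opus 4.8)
The plan is to bound $\pi_2(U)$ directly from its definition. Fix $k\in\bn$ and vectors $z^{(1)},\dots,z^{(k)}\in\ell_\infty^m(\ell_1^m)$ satisfying $\sum_{s=1}^k\bigabs{\ip{z^{(s)}}{z^*}}^2\le 1$ for every $z^*$ in the unit ball of $E^*=\ell_1^m(\ell_\infty^m)$, and write $z^{(s)}=\sum_{i,j}z^{(s)}_{i,j}e_{i,j}$, so $z^{(s)}_{i,j}=\ip{z^{(s)}}{e_{i,j}^*}$. The first step is to observe that for each fixed row index $i$ and each sign vector $\epsilon\in\{-1,+1\}^m$ the functional $z^*=\sum_{j=1}^m\epsilon_je_{i,j}^*$ has norm one in $\ell_1^m(\ell_\infty^m)$; feeding it into the hypothesis and averaging the resulting inequalities over all $\epsilon$, orthogonality of the Rademacher functions yields
\[
\sum_{s=1}^k\sum_{j=1}^m\bigabs{z^{(s)}_{i,j}}^2\le 1\qquad(i=1,\dots,m).
\]

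Next, for each $s$ I would expand $Uz^{(s)}=\sum_{i,j}z^{(s)}_{i,j}U_{i,j}$. Since $(U_{i,j})_{i,j}$ is a symmetric sequence of random variables, so is $(z^{(s)}_{i,j}U_{i,j})_{i,j}$ (rescaling a jointly symmetric family by fixed scalars preserves joint symmetry). Applying the right-hand inequality in the square function inequality~\eqref{eq:square-function} — which carries no constant — and then estimating the $L_1$-norm of a function on $[0,1]$ by its $L_2$-norm, I get
\[
\bignorm{Uz^{(s)}}_{L_1}\le\Bignorm{\Big(\sum_{i,j}\bigabs{z^{(s)}_{i,j}}^2\abs{U_{i,j}}^2\Big)^{1/2}}_{L_2}=\Big(\sum_{i,j}\bigabs{z^{(s)}_{i,j}}^2\norm{U_{i,j}}_{L_2}^2\Big)^{1/2}.
\]

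Finally I would square, sum over $s$, interchange the summations, and for each $i$ factor $\max_{1\le j\le m}\norm{U_{i,j}}_{L_2}^2$ out of the sum over $j$, invoking the per-row estimate from the first step:
\[
\sum_{s=1}^k\bignorm{Uz^{(s)}}_{L_1}^2\le\sum_{i=1}^m\Big(\max_{1\le j\le m}\norm{U_{i,j}}_{L_2}^2\Big)\Big(\sum_{s=1}^k\sum_{j=1}^m\bigabs{z^{(s)}_{i,j}}^2\Big)\le\sum_{i=1}^m\max_{1\le j\le m}\norm{U_{i,j}}_{L_2}^2.
\]
Taking the supremum over all admissible families $(z^{(s)})_{s=1}^k$ gives $\pi_2(U)^2\le\sum_{i=1}^m\max_{1\le j\le m}\norm{U_{i,j}}_{L_2}^2$, which is the assertion. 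The argument is short and I do not expect a serious obstacle; the only points that need care are the identification of the norm-one sign functionals in $\ell_1^m(\ell_\infty^m)$ (equivalently, reducing the weak-$\ell_2$ condition to the clean per-row bound via Rademacher averaging) and the observation that only the constant-free side of~\eqref{eq:square-function} is used, so that no spurious Khintchine constant appears.
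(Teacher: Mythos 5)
Your proof is correct and follows essentially the same route as the paper's: reduce the weak-$\ell_2$ condition to the per-row bound $\sum_s\sum_j|z^{(s)}_{i,j}|^2\le 1$ via sign functionals and Rademacher averaging, then control $\|Uz^{(s)}\|_{L_1}$ by the square-function inequality followed by $L_1\le L_2$ (Jensen), and finally pull out $\max_j\|U_{i,j}\|_{L_2}^2$ row by row. The only cosmetic difference is that you normalize the denominator to $1$ whereas the paper keeps it as a ratio and phrases the averaging step in terms of extreme points and Bernoulli variances.
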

\begin{proof}
  By definition
  \begin{equation}
    \label{eq:def-of-pi-2-of-U}
    \pi_2^2(U)= \sup_{(z^{(s)})_{s=1}^k}
    \frac{\sum_{s=1}^k\norm{Uz^{(s)}}_{L_1}^2}{\sup_{z^*\in
        B_{\ell_1^m(\ell_\infty^m)}} \sum_{s=1}^k
      \abs{\ip{z^{(s)}}{z^*}}^2}
  \end{equation}
  where the supremum is over all $k\in\bn$ and
  $z^{(1)},\dots,z^{(k)}\in \ell_\infty^m(\ell_1^m)$. We will estimate
  the denominator and numerator of the above expression separately. We
  will denote by $\rho$ an arbitrary element $(\rho_j)_{j=1}^m$ of
  $\{\pm 1\}^m$. We begin with the denominator:
  \[
    \sup_{z^*\in B_{\ell_1^m(\ell_\infty^m)}}\ \sum_{s=1}^k
    \abs{\ip{z^{(s)}}{z^*}}^2 = \max_{1\leq i\leq m} \max_{\rho}
    \sum_{s=1}^k \Bigabs{\sum_{j=1}^m \rho_j z^{(s)}_{i,j}}^2
    \geq \max_{1\leq i\leq m} \sum_{s=1}^k \sum_{j=1}^m
    \abs{z^{(s)}_{i,j}}^2\ .
  \]
  The equality follows since the sup is attained at an extreme point of
  $B_{\ell_1^m(\ell_\infty^m)}$. We then replace $\max_\rho$ by
  $\Ave_\rho$, interchange $\Ave_\rho$ and $\sum_{s=1}^k$, and compute the
  variance of a linear combination of independent Bernoulli random
  variables. This yields the inequality. Now the numerator:
  \begin{align*}
    \sum_{s=1}^k\norm{Uz^{(s)}}_{L_1}^2 & \leq\sum_{s=1}^k \Bignorm{\Big(
      \sum_{i,j=1}^m \bigabs{z^{(s)}_{i,j}U_{i,j}}^2
      \Big)^{1/2}}_{L_1}^2  \leq \sum_{s=1}^k \Bignorm{\sum_{i,j=1}^m
      \bigabs{z^{(s)}_{i,j}U_{i,j}}^2}_{L_1} \\[2ex]
    &= \sum_{s=1}^k \sum_{i,j=1}^m \abs{z^{(s)}_{i,j}}^2 \norm{U_{i,j}}_{L_2}^2
  = \sum_{i=1}^m \bigg( \sum_{s=1}^k \sum_{j=1}^m \abs{z^{(s)}_{i,j}}^2
    \norm{U_{i,j}}_{L_2}^2 \bigg)\\[2ex]
    &\leq \sum_{i=1}^m \max_{1\leq j\leq m}\norm{U_{i,j}}_{L_2}^2\cdot
    \bigg( \sum_{s=1}^k \sum_{j=1}^m \abs{z^{(s)}_{i,j}}^2 \bigg)\ .
  \end{align*}
  Here the first inequality is the square function
  inequality~\eqref{eq:square-function}, the second inequality follows
  from Jensen's inequality whereas the rest is straightforward.
  Substitution of our estimates into~\eqref{eq:def-of-pi-2-of-U}
  yields the result.
\end{proof}

\begin{proof}[Proof of Theorem~\ref{mainthm:dichotomy-indep-sym-case}]
  For $m\in\bn$ we let $\cF_m$ be the set of functions
  $\{1,\dots,m\}\to\{1,\dots,m\}$. Functions $j,j'\in\cF_m$ are said
  to be \emph{disjoint }if $j_i\neq j'_i$ for all $i=1,\dots,m$. Since
  $\norm{T^{(m)}}$ is attained at an extreme point of
  $B_{\ell_\infty^m(\ell_1^m)}$, we have
  \[
  \norm{T^{(m)}}=\sup\Big\{ \be \bigabs{\sum_{i=1}^m \rho_i
  T^{(m)}_{i,j_i}}:\, j\in\cF_m,\ \rho\in\{\pm1\}^m \Big\}\ .
  \]
  By the symmetry of the $T^{(m)}_{i,j}$, we in fact have
  \[
  \norm{T^{(m)}}=\sup\Big\{ \be \bigabs{\sum_{i=1}^m
  T^{(m)}_{i,j_i}}:\, j\in\cF_m\Big\}\ .
  \]
  We consider two cases motivated by the notion of uniform
  approximate lattice bounds. The second case is the negation of the
  first.
  \begin{mylist}{(ii')}
  \item[(i')]
    $\E\vare>0\ \V C>0\ \V n\in\bn\ \E m\in\bn$ and
    pairwise disjoint functions $j^{(s)}\in \cF_m$ ($s=1,\dots,n$)
    such that
    \begin{equation}
      \label{eq:caseA}
      \Bignorm{\sum_{i=1}^m T^{(m)}_{i,j^{(s)}_i}\cdot\bi
        _{\big\{\bigabs{T^{(m)}_{i,j^{(s)}_i}}>C\big\}}}_{L_1}
      \geq\vare\qquad \text{for }s=1,\dots,n\ .
    \end{equation}
  \item[(ii')]
    $\V \vare>0\quad \E C>0\quad \E n\in\bn\quad \V m\geq n$ there
  exist pairwise
    disjoint functions $j^{(s)}\in \cF_m$ ($s=1,\dots,n$) such that
    \begin{equation}
      \label{eq:caseB}
      \Bignorm{\sum_{i=1}^m
        T^{(m)}_{i,j_i}\cdot\bi_{\{\abs{T^{(m)}_{i,j_i}}>C\}}}_{L_1} <\vare
    \end{equation}
    for each $j\in \cF_m$ that is disjoint from all the $j^{(s)}$.\\
  \end{mylist}
  We will deduce alternatives~(i) and~(ii) of
  Theorem~\ref{mainthm:dichotomy-indep-sym-case} from the above
  cases~(i') and~(ii'), respectively.
  We begin with case~(i'). Fix $n\in\bn$ and choose $C>0$
  such that $\big(1-\frac2C\big)^{n}\geq\frac12$. Now case~(i') gives
  $m\in\bn$ and pairwise disjoint functions $j^{(s)}\in \cF_m$
  ($s=1,\dots,n$) such that~\eqref{eq:caseA} holds. To avoid cumbersome
  notation, we assume, after permuting entries in each row if
  necessary, that $j^{(s)}_i=s$ for all $i=1,\dots,m$ and
  $s=1,\dots,n$. We also drop the superscript $m$ from $T^{(m)}$ for
  the rest of this case.

  Fix $s\in\{1,\dots,n\}$. We apply the square function
  inequality~\eqref{eq:square-function} twice and monotonicity of
  $\norm{\cdot}_{L_1}$ to~\eqref{eq:caseA}, to obtain
  \begin{align*}
    \Bignorm{\sum_{i=1}^m T_{i,s}\cdot\bi_{\{\max_{i'}
        \abs{T_{i',s}}>C\}}}_{L_1} & \geq \frac1{K} \Bignorm{\Big(
      \sum_{i=1}^m T^2_{i,s}
      \cdot\bi_{\{\max_{i'}\abs{T_{i',s}}>C\}}\Big)^{1/2}}_{L_1}\\
    & \geq \frac1K \Bignorm{\Big(
      \sum_{i=1}^m T^2_{i,s}
      \cdot\bi_{\{\abs{T_{i,s}}>C\}}\Big)^{1/2}}_{L_1}\\
    & \geq \frac1K \Bignorm{\sum_{i=1}^m T_{i,s}\cdot\bi_{\{
        \abs{T_{i,s}}>C\}}}_{L_1} \geq \frac{\vare}{K}\ .
  \end{align*}
  Now set $f_s=\sum_{i=1}^m T_{i,s}$, $E'_s=\big\{
  \max_i\abs{T_{i,s}}>C\big\}$ and $E_s=E'_s\cap\bigcap_{r\neq
    s}(E'_r)^{\complement}$.
  We have $\norm{f_s}_{L_1}=\be \bigabs{\sum_{i=1}^m T_{i,s}} \leq 1$
  and $\norm{f_s\restrict_{E'_s}}_{L_1}\geq\frac{\vare}{K}$. By an
  inequality of L\'evy (\cf~\cite[Proposition~2.3]{led-tal:91}) and Markov's
  inequality we have
  \[
  \prob{E'_s}=\bigprob{\max_i\abs{T_{i,s}}>C}\leq
  2\cdot \bigprob{\bigabs{{\ts\sum_{i=1}^m T_{i,s}}}>C} \leq
  \frac2C\ .
  \]
  Since the $T_{i,j}$ are independent, it follows that
  \[
    \norm{f_s\restrict_{E_s}}_{L_1} = \be \bigabs{f_s \bi_{E_s'}\cdot
      \bi_{\bigcap_{r\neq s}(E_r')^{\complement}}}
    =\be \bigabs{f_s \bi_{E_s'}} \cdot \Bigprob{\bigcap_{r\neq
        s}(E_r')^{\complement}} \geq \frac{\vare}K \cdot
    \Big(1-\frac2C\Big)^{n-1}\geq \frac{\vare}{2K}\ .
  \]
  Thus we have proved that for all $n\in\bn$ there exist $m\in\bn$,
  $f_1,\dots,f_n\in T^{(m)}\big( B_{\ell_\infty^m(\ell_1^m)}\big)$ and
  disjoint sets $E_1,\dots,E_n$ with
  $\norm{f_s\restrict_{E_s}}\geq\frac{\vare}{2K}$ for
  $s=1,\dots,n$. By
  Proposition~\ref{prop:almost-disjoint-supp-implies-factorization}
  the identity maps $\id_{\ell_1^k}$ ($k\in\bn$) uniformly factor through the
  $T^{(m)}$.
 
  We now turn to case~(ii'). Fix $\vare>0$ and choose the
  corresponding $C>0$ and $n\in\bn$. We will show that for every
  $m\in\bn$ there exists $S^{(m)}\colon \ell_\infty^m(\ell_1^m)\to
  L_1$ such that $\norm{T^{(m)}-S^{(m)}}<\vare$, and moreover
  $\sup_m\pi_2(S^{(m)})<\infty$. We can then complete the proof by
  applying Theorem~\ref{thm:factorization-2-summing} to deduce that
  the $S^{(m)}$ uniformly factor through $\ell_\infty^k$
  ($k\in\bn$). Since $\vare$ was arbitrary, it follows that the
  $T^{(m)}$ uniformly approximately factor through $\ell_\infty^k$
  ($k\in\bn$).
  
  Fix $m\in\bn$. If $m<n$, then we can take $S^{(m)}=T^{(m)}$. So
  assume $m\geq n$, put $T=T^{(m)}$, $\cF=\cF_m$, and let
  $j^{(s)}\in \cF$ ($s=1,\dots,n$) be pairwise disjoint functions
  such that~\eqref{eq:caseB} holds for each $j\in \cF$ that is
  disjoint from all the $j^{(s)}$. We may again assume for
  convenience of notation that $j^{(s)}$ is the constant function
  with value $s$ for each $s=1,\dots,n$. We now define
  \[
  S=S^{(1)}+S^{(2)} \colon \ell_\infty^m(\ell_1^m)\to L_1
  \]
  by letting, for each $i=1,\dots,m$,
  \begin{align*}
    S^{(1)}_{i,j} &= \begin{cases}
      T_{i,j} & \text{if }1\leq j\leq n\\
      0 & \text{if }n<j\leq m
    \end{cases}\\[1ex]
    S^{(2)}_{i,j} &=\begin{cases}
    0 & \text{if }1\leq j\leq n\\
    T_{i,j}\cdot \bi_{\big\{ \abs{T_{i,j}}\leq C\big\}} & \text{if
    }n<j\leq m\ .
    \end{cases}
  \end{align*}
  We first check that $\norm{T-S}<\vare$. Here the suprema are taken
  over all $j\in\cF$ and $\rho\in\{\pm1\}^m$.
  \begin{align*}
    \norm{T-S} &=   \sup_{j, \rho}\be
    \Bigabs{\sum_{i=1}^m \rho_i (T-S)_{i,j_i}}\\
    &=    \sup_{j, \rho}\be \Bigabs{\sum_{i:\,n<j_i} 
      \rho_iT_{i,j_i}\cdot \bi_{\big\{
        \abs{T_{i,j_i}}>C\big\}}}<\vare\ .
  \end{align*}
  The first line comes from looking at the extreme points of
  $B_{\ell_\infty^m(\ell_1^m)}$. The second line follows from the
  definition of $S$ and~\eqref{eq:caseB} as well as the use of convexity
  and the symmetry of the $T_{i,j}$.

  We next estimate $\pi_2(S)$ from above. First, $S^{(1)}$ clearly
  factors through $\ell_\infty^m(\ell_1^n)$ with constant~$1$. Since
  $\ell_\infty^m(\ell_1^n)$ is $n$-isomorphic to $\ell_\infty^{mn}$,
  it follows by Theorem~\ref{thm:grothendieck} that
  $\pi_2(S^{(1)})\leq K_G\cdot n$. Second, we can estimate
  $\pi_2(S^{(2)})$ as follows. First, by Lemma~\ref{lem:pi-2-norm} we
  have
  \[
  \pi_2^2(S^{(2)}) \leq \sum_{i=1}^m \max_{1\leq j\leq m}
  \bignorm{S^{(2)}_{i,j}}_{L_2}^2 = \max_{j\in\cF} \sum_{i=1}^m
  \bignorm{S^{(2)}_{i,j_i}}_{L_2}^2= \max_{j\in\cF}
  \Bignorm{\sum_{i=1}^m S^{(2)}_{i,j_i}}_{L_2}^2\ ,
  \]
  where the last equality is the variance of a sum of independent,
  mean zero random variables. To continue, we need the following
  consequence of the Hoffman-J\o{rgensen} inequality
  (\cf~\cite[Proposition~6.10]{led-tal:91}). Here the notation
  $a\stackrel{\kappa}{\sim}b$ means that $a\leq \kappa\cdot b$ and
  $b\leq \kappa\cdot a$.
  \begin{thm}
    Given $0<p,q<\infty$, there is a constant $K_{p,q}$ such that if
    $\cX_1,\dots, \cX_N$ are independent, symmetric random variables in
    $L_p$ then
    \[
    \Bignorm{\sum_{i=1}^N \cX_i}_{L_p} \stackrel{K_{p,q}}{\sim}
    \Bignorm{\max_{1\leq i\leq N} \abs{\cX_i}}_{L_p} +
    \Bignorm{\sum_{i=1}^N \cX_i\cdot \bi_{\{\abs{\cX_i}\leq \delta_0\}}}_{L_q}
    \]
    where $\delta_0=\inf \Big\{ t>0:\,\sum_{i=1}^N
    \bigprob{\abs{\cX_i}>t}\leq \frac1{8\cdot 3^p}  \Big\}$.
  \end{thm}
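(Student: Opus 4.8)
The plan is to derive this from the classical Hoffmann--J\o{}rgensen inequality, which I take as known in its moment form: for each $0<r<\infty$ there is a constant $K_r$ with $\bignorm{\sum_{i=1}^N\cX_i}_{L_r}\le K_r\big(\bignorm{\max_{1\le i\le N}\abs{\cX_i}}_{L_r}+t_r\big)$, where $t_r=\inf\{t>0:\bp(\abs{\sum_i\cX_i}>t)\le 1/(8\cdot3^r)\}$; the threshold $1/(8\cdot3^r)$ here is precisely the constant appearing in the definition of $\delta_0$. The key move is to split each variable at level $\delta_0$: set $Z_i=\cX_i\bi_{\{\abs{\cX_i}\le\delta_0\}}$ and $W_i=\cX_i\bi_{\{\abs{\cX_i}>\delta_0\}}$, so $\cX_i=Z_i+W_i$ and both $(Z_i)$ and $(W_i)$ are independent symmetric. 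The truncated sum $\sum_iZ_i$ is the quantity that will appear on the right in $L_q$; the large-jump sum $\sum_iW_i$ gets absorbed into $\bignorm{\max_i\abs{\cX_i}}_{L_p}$, and the whole point of the odd constant $1/(8\cdot3^p)$ is that it forces the Hoffmann--J\o{}rgensen quantile of $\sum_iW_i$ to vanish.

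First I would record three auxiliary facts. (a)~$\delta_0\le C_p\bignorm{\max_i\abs{\cX_i}}_{L_p}$: for every $t<\delta_0$ we have $\sum_i\bp(\abs{\cX_i}>t)>1/(8\cdot3^p)$, and since $\bp(\max_i\abs{\cX_i}>t)=1-\prod_i\big(1-\bp(\abs{\cX_i}>t)\big)\ge(1-\me^{-1})\min\{\sum_i\bp(\abs{\cX_i}>t),1\}\ge(1-\me^{-1})/(8\cdot3^p)$ for all such $t$, integrating $pt^{p-1}$ over $(0,\delta_0)$ gives the claim. (b)~$\bignorm{\sum_iW_i}_{L_p}\le K_p\bignorm{\max_i\abs{\cX_i}}_{L_p}$: by right-continuity of $t\mapsto\sum_i\bp(\abs{\cX_i}>t)$ and the definition of $\delta_0$, $\sum_i\bp(\abs{\cX_i}>\delta_0)\le1/(8\cdot3^p)$, hence $\bp(\sum_iW_i\neq0)\le1/(8\cdot3^p)$, so the quantile for $\sum_iW_i$ is $0$ and the moment inequality collapses to $\bignorm{\sum_iW_i}_{L_p}\le K_p\bignorm{\max_i\abs{W_i}}_{L_p}\le K_p\bignorm{\max_i\abs{\cX_i}}_{L_p}$. (c)~$\bignorm{\max_i\abs{\cX_i}}_{L_p}\le C\bignorm{\sum_i\cX_i}_{L_p}$, from $\abs{\cX_i}=\abs{S_i-S_{i-1}}\le2\max_k\abs{S_k}$ and L\'evy's maximal inequality $\bp(\max_k\abs{S_k}>t)\le2\bp(\abs{S_N}>t)$ for symmetric sums, where $S_k=\sum_{j\le k}\cX_j$.

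Because $\abs{Z_i}\le\delta_0$ pointwise we have $\max_i\abs{Z_i}\le\delta_0$, so applying the moment inequality to $\sum_iZ_i$ at the exponents $p$ and $q$ and estimating each quantile by Chebyshev at the \emph{other} exponent gives
\[
\bignorm{\sum_iZ_i}_{L_p}\le K_p\Big(\delta_0+(8\cdot3^p)^{1/q}\bignorm{\sum_iZ_i}_{L_q}\Big),\qquad
\bignorm{\sum_iZ_i}_{L_q}\le K_q\Big(\delta_0+(8\cdot3^q)^{1/p}\bignorm{\sum_iZ_i}_{L_p}\Big).
\]
These two estimates cross between $p$ and $q$, so invoking them in the right order is not circular. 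For ``$\bignorm{\sum_i\cX_i}_{L_p}\lesssim$ RHS'': the triangle inequality gives $\bignorm{\sum_i\cX_i}_{L_p}\le\bignorm{\sum_iZ_i}_{L_p}+\bignorm{\sum_iW_i}_{L_p}$; bound the first term using the first displayed inequality together with $\delta_0\lesssim_p\bignorm{\max_i\abs{\cX_i}}_{L_p}$ from~(a), and the second using~(b), to land at $\lesssim_{p,q}\bignorm{\max_i\abs{\cX_i}}_{L_p}+\bignorm{\sum_iZ_i}_{L_q}$. For ``RHS $\lesssim\bignorm{\sum_i\cX_i}_{L_p}$'': (c) gives $\bignorm{\max_i\abs{\cX_i}}_{L_p}\lesssim\bignorm{\sum_i\cX_i}_{L_p}$, then (a) gives $\delta_0\lesssim_p\bignorm{\sum_i\cX_i}_{L_p}$, and $\bignorm{\sum_iZ_i}_{L_p}\le\bignorm{\sum_i\cX_i}_{L_p}+\bignorm{\sum_iW_i}_{L_p}\lesssim_p\bignorm{\sum_i\cX_i}_{L_p}$ by (b) and (c); feeding these into the second displayed inequality yields $\bignorm{\sum_iZ_i}_{L_q}\lesssim_{p,q}\bignorm{\sum_i\cX_i}_{L_p}$, and adding finishes it.

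The genuinely delicate points are, first, the exponent bookkeeping --- one must always push the $L_q$-norm of $\sum_iZ_i$ through Hoffmann--J\o{}rgensen at exponent $p$ and the $L_p$-norm through it at exponent $q$, never at the same exponent, or the bound closes on itself --- and, second, the requirement that the constant in the classical Hoffmann--J\o{}rgensen moment estimate match the $1/(8\cdot3^r)$ used to define $\delta_0$, which is exactly what makes step~(b) produce a zero quantile (and hence the reason the statement carries that particular constant). The measure-theoretic housekeeping in (a) and (b) --- right-continuity of the tail-sum function, strictness of the tail bound just below $\delta_0$, and the elementary estimate $1-\prod_i(1-p_i)\ge(1-\me^{-1})\min\{\sum_ip_i,1\}$ --- is routine but should not be skipped.
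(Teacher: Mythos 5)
The paper does not prove this result; it states it as a quoted consequence of the Hoffmann--J\o{}rgensen inequality and cites Ledoux--Talagrand, Proposition 6.10, so there is no internal proof to compare against. Your blind derivation is correct and is essentially the argument one finds in Ledoux--Talagrand: split at $\delta_0$ into $Z_i=\cX_i\bi_{\{\abs{\cX_i}\le\delta_0\}}$ and $W_i=\cX_i\bi_{\{\abs{\cX_i}>\delta_0\}}$, observe that the choice of threshold $1/(8\cdot 3^p)$ makes the Hoffmann--J\o{}rgensen quantile of $\sum_i W_i$ vanish so that term is absorbed into $\bignorm{\max_i\abs{\cX_i}}_{L_p}$, control $\delta_0$ itself by $\bignorm{\max_i\abs{\cX_i}}_{L_p}$ via the $1-\prod(1-p_i)\ge(1-\me^{-1})\min\{\sum p_i,1\}$ bound, get the reverse inequality from L\'evy's maximal inequality, and handle the $L_p$/$L_q$ interchange for $\sum_i Z_i$ by estimating each Hoffmann--J\o{}rgensen quantile with Chebyshev at the opposite exponent. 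The bookkeeping you flagged as delicate --- that the quoted moment inequality must already carry the threshold $1/(8\cdot3^r)$ so step (b) genuinely collapses, and that the two cross-exponent applications must never be invoked at the same exponent --- are precisely where a sloppy version of this argument becomes circular, and you have handled both correctly.
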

  We apply this theorem to the sequence
  $\big(S^{(2)}_{i,j_i}\big)_{i=1}^m$ (where $j\in\cF$) with
  $p=2,\ q=1$ to obtain
  \begin{align*}
    K_{2,1}^{-1}\cdot \Bignorm{\sum_{i=1}^m S^{(2)}_{i,j_i}}_{L_2} &\leq
    \Bignorm{\max _{1\leq i\leq m}\abs{S^{(2)}_{i,j_i}}}_{L_2} +
    \Bignorm{\sum_{i=1}^m S^{(2)}_{i,j_i}\cdot
      \bi_{\abs{S^{(2)}_{i,j_i}}\leq \delta_0}}_{L_1}\\
    &\leq C + K\cdot \Bignorm{\sum_{i=1}^m T_{i,j_i}}_{L_1} \leq C+K\ .
  \end{align*}
  The second inequality follows by applying the square function
  inequality twice and monotonicity of expectation. Substituting this
  into the previous inequality, we obtain $\pi_2(S^{(2)}) \leq
  K_{2,1}\cdot (C+K)$.

  We have thus shown that $\pi_2(S)\leq\pi_2(S^{(1)})+\pi_2(S^{(2)})
  \leq K_G\cdot n + K_{2,1}\cdot (C+K)$. This upper bound is
  independent of $m$, and so the proof is complete.
\end{proof}

\def\cprime{$'$} \def\cprime{$'$} \def\cprime{$'$}

\vspace{2ex}

\noindent
\parbox[t]{0.5\textwidth}{%
N.~J.~Laustsen and A.~Zs\'ak\\
Department of Mathematics and Statistics\\
Lancaster University\\
Lancaster\\
LA1 4YF, United Kingdom\\
email: \texttt{n.laustsen@lancaster.ac.uk\\
   a.zsak@dpmms.cam.ac.uk}}%
\hfill%
\parbox[t]{0.5\textwidth}{%
E.~Odell\\
Department of Mathematics,\\
The University of Texas,\\
1 University Station C1200,\\
Austin, TX 78712, USA\\
email: \texttt{odell@math.utexas.edu}}

\vspace{2ex}

\noindent
\parbox[t]{0.5\textwidth}{%
A.~Zs\'ak also at:\\
Peterhouse\\
Cambridge\\
CB2 1RD, United Kingdom}
\hfill%
\parbox[t]{0.5\textwidth}{%
Th.~Schlumprecht\\
Department of Mathematics,\\
Texas A\&M University,\\
College Station, TX 78712, USA\\
email: \texttt{schlump@math.tamu.edu}}

\end{document}